\let\tilde\widetilde
\newcommand{\cH}{\mathcal{H}}
\newcommand{\cX}{\mathcal{X}}
\newcommand{\EE}{\mathbb{E}}
\newcommand{\PP}{\mathbb{P}}
\newcommand{\argmin}{\mathop{\mathrm{argmin}}}
\newcommand{\argmax}{\mathop{\mathrm{argmax}}}
\DeclareMathOperator{\Var}{{\rm Var}}
\newcommand{\la}{\langle}
\newcommand{\ra}{\rangle}
\DeclareMathOperator{\E}{E}
\newtheorem{proposition}{\indent \bf Proposition}
\numberwithin{equation}{section}
\def\bbR{\mathbb{R}}
\theoremstyle{plain}
\numberwithin{equation}{section}
\newtheorem{Theorem}{Theorem}[section]
\newtheorem{Lemma}[Theorem]{Lemma}
\newtheorem{Corollary}[Theorem]{Corollary}
\newtheorem{Proposition}[Theorem]{Proposition}
\newtheorem{Assumption}{Assumption}
\newtheorem{lemma}{\indent \bf Lemma}
\newtheorem{Remark}{Remark}[section]
\newtheoremstyle{mytheoremstyle} % name
    {\topsep}                    % Space above
    {\topsep}                    % Space below
    {\normalfont}                   % Body font
    {}                           % Indent amount
    {\bfseries}                   % Theorem head font
    {.}                          % Punctuation after theorem head
    {.5em}                       % Space after theorem head
    {}  % Theorem head spec (can be left empty, meaning ¡®normal¡¯)
\theoremstyle{mytheoremstyle}
\newcommand{\BlackBox}{\rule{1.5ex}{1.5ex}}  % end of proof
\def\QED{~\rule[-1pt]{5pt}{5pt}\par\medskip}
\newenvironment{proof}{\par\noindent{\bf Proof\ }}{\hfill\BlackBox\\[2mm]}
\newtheorem{theorem}{Theorem}
\newtheorem{lemma}[theorem]{Lemma}
\numberwithin{equation}{section}
\numberwithin{theorem}{section}
\newcommand{\cmark}{\ding{51}}
\newcommand{\xmark}{\ding{55}}
\long\def\comment#1{}
\begin{document}

\title{How Many Machines Can We Use in Parallel Computing for Kernel Ridge Regression?}

\author
{
    Meimei Liu\thanks{Postdoc, Department of Statistical Science, Duke University, Durham, NC 27705. E-mail: meimei.liu@duke.edu.},
    Zuofeng Shang\thanks{Assistant Professor, Department of Mathematical Sciences, Indiana University-Purdue University at Indianapolis. Email: shangzf@iu.edu. Research sponsored by NSF DMS-1764280 and a startup grant from IUPUI.},
    Guang Cheng\thanks{Professor, Department of Statistics, Purdue University, West Lafayette, IN 47906. E-mail: chengg@purdue.edu. Research Sponsored by NSF CAREER Award DMS-1151692, DMS-1712907, and Office of Naval Research (ONR N00014-15-1-2331).}
}

\date{}
\maketitle

\begin{abstract}
  This paper aims to solve a basic problem in distributed statistical inference: how many machines can we use in parallel computing? In kernel ridge regression, we address this question in two important settings: nonparametric estimation and hypothesis testing. Specifically, we find a range for the number of machines under which optimal estimation/testing is  achievable. The employed empirical processes method provides a unified framework, that allows us to handle various regression problems (such as thin-plate splines and nonparametric additive regression) under different settings (such as univariate, multivariate and diverging-dimensional designs).  It is worth noting that the upper bounds of the number of machines are proven to be un-improvable (upto a logarithmic factor) in two important cases: smoothing spline regression and Gaussian RKHS regression. Our theoretical findings are backed by thorough numerical studies.
\end{abstract}

\noindent{\bf Key Words:} Computational limit, divide and conquer, kernel ridge regression, minimax optimality, nonparametric testing.

% -------------------------- %
\section{Introduction}

In the parallel computing environment, a common practice is to distribute a massive dataset to multiple processors, and then aggregate local results obtained from separate machines into global counterparts. This Divide-and-Conquer (D\&C) strategy often requires a growing number of machines to deal with an increasingly large dataset. An important question to statisticians is "how many machines can we use in parallel computing to guarantee statistical optimality?" The present work aims to explore this basic yet fundamentally important question in a classical nonparametric regression setup, i.e., kernel ridge regression (KRR). This can be done by carefully analyzing statistical versus computational trade-off in the D\&C framework, where the number of deployed machines is treated as a simple proxy for computing cost.

Recently, researchers have made impressive progress about KRR in the modern D\&C framework with different conquer strategies; examples include median-of-means estimator proposed by \cite{minsker2017distributed}, Bayesian aggregation considered by \cite{shang2015bayesian,weberbayesian,srivastava2015scalable,szabo2017asymptotic}, and simple averaging considered by \cite{zhang2013divide} and \cite{shang2017computational}. 
Upper bounds for the number of machines $s$ have been studied in such strategies to guarantee good property. For instance, \cite{zhang2013divide} showed that, when $s$ processors are employed with $s$ in a suitable range, D$\&$C method still preserves minimax optimal estimation. In smoothing spline regression  (a special case of KRR), \cite{shang2017computational} derived \textit{critical}, i.e., un-improvable, upper bounds for $s$ to achieve either optimal estimation or optimal testing, but their results are only valid in univariate fixed design. The critical bound for estimation obtained by \cite{shang2017computational} significantly improves the one given in \cite{zhang2013divide}. Nonetheless, it remains unknown if results obtained in \cite{shang2017computational} continues to hold in a more general KRR framework where the design is either multivariate or random. On the other hand, there is a lack of literature dealing with nonparametric testing in general KRR. To the best of our knowledge, \cite{shang2017computational} is the only reference but in the special smoothing spline regression with univariate fixed designs.

In this paper, we consider KRR in the D$\&$C regime in a general setup: design is random and multivariate. As our technical contribution, we characterize the upper bounds of $s$ for achieving optimal estimation and testing based on quantifying an empirical process (EP), such that a sharper concentration bound of the EP leads to a tighter upper bound of $s$. Our EP approaches can handle various function spaces including Sobolev space, Gaussian RKHS, or spaces of special structures such as additive functions, in a unified manner. As an illustration example, in the particular smoothing spline regression, we introduce the Green function for equivalent kernels to the EP bound and achieve a polynomial order improvement of $s$ compared with \cite{zhang2013divide}. It is worthy noting that our upper bound is almost identical as \cite{shang2017computational} (upto a logarithmic factor) for optimal estimation, which is proven to be un-improvable. 

The second main contribution of this paper is to propose a Wald type test statistic for nonparametric testing in D\&C regime. Asymptotic null distribution and
power behaviors of the proposed test statistic are carefully analyzed. One important finding is that the upper bounds of $s$ for optimal testing 
are dramatically different from estimation, indicating the essentially different natures of the two problems. 
Our testing results are derived in a general framework that cover the aforementioned important function spaces.
As an important byproduct, we derive a minimax rate of testing for nonparametric additive models
with diverging number of components which is new in literature. Such rate is crucial in deriving the upper bound for $s$ for optimal testing,
and is of independent interest.

\section{Background and Distributed Kernel Ridge Regression}
We begin by introducing some background on reproducing kernel Hilbert space (RKHS), and our nonparametric testing formulation under the distributed kernel ridge regression. 
 
\subsection{Nonparametric regression in reproducing kernel Hilbert spaces}
Suppose that data $\{(Y_i,X_i): i=1,\ldots,N\}$ are \textit{iid} generated from the following regression model
\begin{equation}\label{krr:model}
Y_i=f(X_i)+\epsilon_i,\,\,i=1,\ldots,N,
\end{equation}
where $\epsilon_i$ are random errors
with $E\{\epsilon_i\}=0$, $E\{\epsilon_i^2|X_i)=\sigma^2(X_i)>0$, 
the covariates $X_i\in \mathcal{X} \subseteq \bbR^d$ follows a distribution 
$\pi(x)$, and $Y_i \in \mathbb{R}$ is a real-valued response. 
Here, $d\geq 1$ is either fixed or diverging with $N$, and $f$ is unknown. 

Throughout we assume that
$f\in\cH$, where  $\mathcal{H} \subset L_\pi^2(\mathcal{X})$ is a reproducing kernel Hilbert
space (RKHS) associated with an inner product
$\langle \cdot,\cdot \rangle_{\cH}$ and a reproducing kernel function $R(\cdot,\cdot):
\mathcal{X}\times\mathcal{X}\to\bbR$. 
By Mercer's Theorem, $R$ has the following spectral expansion (\cite{wahba1990spline}):
\begin{equation*}%\label{M:decom:K}
R(x,x') = \sum_{i=1}^\infty \mu_i \varphi_i(x)\varphi_i(x'),
\,\,\,\,x,x'\in\mathcal{X},
\end{equation*}
where $\mu_1 \geq \mu_2 \geq \cdots \geq 0$ 
is a sequence of eigenvalues and $\{\varphi_{i}\}_{i=1}^{\infty}$ form a
basis in $L_\pi^2(\mathcal{X})$. Moreover, for any $i,j \in \mathbb{N}$,
$$
\langle \varphi_{i}, \varphi_{j}\rangle_{L_\pi^2(\mathcal{X})} = \delta_{ij} \quad \quad
\text{and} \quad \quad \langle \varphi_{i}, \varphi_{j}\rangle_{\cH} = \delta_{ij}/\mu_i,
$$
where $\delta_{ij}$ is Kronecker's $\delta$. 

We introduce a norm $\| \cdot \|$ in $\cH$ by combining the $L_2$ norm and $\|\cdot\|_\cH$ norm to facilitate our statistical inference theory. For $f,g\in\mathcal{H}$, define 
\begin{equation}\label{eq:new_norm}
\langle f,g\rangle=V(f,g)+\lambda\langle f,g\rangle_{\mathcal{H}},
\end{equation}
where $V(f,g)=E\{f(X)g(X)\}$ and $\lambda>0$ is the penalization parameter. Clearly, $\langle\cdot,\cdot\rangle$ defines an inner product on $\mathcal{H}$. It is easy to prove that $(\mathcal{H},\langle\cdot,\cdot\rangle)$ is also a RKHS with reproducing kernel function  $K(\cdot, \cdot)$ satisfying 
the following so-called reproducing property: 
 \begin{equation*}
   \la f, K_x(\cdot)\ra = f(x), \;\; \text{for all } f\in \cH,
 \end{equation*}
where  $K_x(\cdot)=K(x,\cdot)$ for $x\in\mathcal{X}$.

For any $f\in \cH$, we can express the function in terms of the Fourier expansion as $f=\sum_{\nu\ge1}V(f,\varphi_\nu)\varphi_\nu$. Therefore, 
\begin{equation}\label{eq:expansion}
\la f, \varphi_\nu \ra = \sum_{i\ge1}V(f,\varphi_i) \la \varphi_i, \varphi_\nu \ra = V(f, \varphi_\nu)(1+ \lambda/\mu_\nu). 
\end{equation}
Replacing $f$ with $K_x$ in (\ref{eq:expansion}), we have $V(K_x, \varphi_\nu) = \frac{\la K_x, \varphi_\nu \ra}{1+ \lambda/\mu_\nu} = \frac{\varphi_\nu(x)}{1+ \lambda/\mu_\nu}$. 
Then for any $x,y\in\mathcal{X}$, $K(x,y)$ %$ = \sum_{\nu\geq 1} V(\widetilde{K}_x, \varphi_\nu)\varphi_\nu$ 
has an explicit eigen-expansion expressed as
$$K(x,y)=\sum_{\nu\geq 1} V(K_x, \varphi_\nu)\varphi_\nu(y) = \sum_{\nu\ge1}\frac{\varphi_\nu(x)\varphi_\nu(y)}{1+\lambda/\mu_\nu}.$$

\subsection{Distributed kernel ridge regression}
For estimating $f$, we consider the kernel ridge regression (KRR) in
a divide-and-conquer (D$\&$C) regime. First, randomly divide the $N$ samples into $s$ subsamples. Let $I_j$ denote the set of indices of the observations from subsample $j$ for $j=1,\ldots,s$. For simplicity, suppose $|I_j|=n$, i.e., all subsamples are of equal sizes. Hence, the total sample size is $N=ns$. Then, we estimate $f$ based on subsample $j$ through the following KRR method:
\[
\widehat{f}_j=\argmin_{f\in\mathcal{H}}\ell_{j,\lambda}(f)
\equiv\argmin_{f\in\mathcal{H}}\frac{1}{2n}\sum_{i\in I_j}(Y_i-f(X_i))^2+\frac{\lambda}{2}\|f\|_{\mathcal{H}}^2,\,\,\,\,j=1,\ldots,s,
\]
where $\lambda>0$ is the penalization parameter.
The D$\&$C estimator of $f$ is defined as the average of $\widehat{f}_j$'s, that is, 
$\bar{f}=\sum_{j=1}^s\widehat{f}_j/s$.

Based on $\bar{f}$, we further propose a Wald-type statistic
$T_{N,\lambda}:=\|\bar{f}\|^2$ for testing the hypothesis
\begin{equation}\label{H0:H1}
\textrm{$H_0: f=0$, vs. $H_1: f\in\mathcal{H}\backslash\{0\}$.}
\end{equation} 
In general, testing $f = f_0$ (for a known $f_0$) is equivalent to testing $f_\ast\equiv f-f_0=0$. So, (\ref{H0:H1}) has no
loss of generality. 
\section{Main results}
In this section, we derive some general results relating to $\bar{f}$ and $T_{N,\lambda}$.
Let us first introduce some regularity assumptions.
\subsection{Assumptions}
The following Assumptions \ref{A1} and \ref{A2} require that the design density is bounded and 
the error $\epsilon$ has finite fourth moment, which are commonly used in literature, see \cite{eggermont2009maximum}. 
\begin{Assumption}\label{A1}
 There exists a constant $ c_\pi >0$ such that for all $x\in\mathcal{X}$,
 $0\le \pi(x), \sigma^2(x)\le  c_\pi$.
\end{Assumption}

\begin{Assumption}\label{A2}
There exists a positive constant $\tau$ such that $E\{\epsilon^4|X\}<\tau$ almost surely.
\end{Assumption}
Define $\|f\|_{\sup}=\sup_{x\in\mathcal{X}}|f(x)|$ as the supremum norm of $f$. We further assume that $\{\varphi_\nu\}_{\nu=1}^\infty$ are uniformly bounded on $\cX$, and $\{\mu_{\nu}\}_{\nu=1}^\infty$ satisfy certain tail sum property.

\begin{Assumption}\label{A3}
$c_\varphi :=\sup_{j\ge 1} \|\varphi_j\|_{\sup} < \infty $ and $\sup_{k\geq 1} \frac{\sum_{\nu=k+1}^\infty \mu_i}{k \mu_k} < \infty$. 
\end{Assumption} 

The uniform boundedness condition of eigen-functions holds for various kernels, example includes univariate periodic kernel, 2-dimensional Gaussian kernel, multivariate additive kernel; see \cite{lu2016nonparametric}, \cite{minh2006mercer} and reference therein. The tail sum property can also be verified in various RKHS, and is deferred to the Appendix.

Define $h^{-1}:=\sum_{\nu\ge1}\frac{1}{1+\lambda/\mu_\nu}$ as effective dimension.  It has been widely studied in reference \cite{bartlett2005local}, \cite{mendelson2002geometric}, \cite{zhang2005learning} etc. There is an explicit relationship between $h$ and $\lambda$ as illustrated in various concrete examples in Section \ref{sec:two:examples}. Another quantity of interest is the series $\sum_{\nu\ge1}(1+\lambda/\mu_\nu)^{-2}$, which represents the variance term defined in Theorem \ref{basic:thm:testing:null:distr}. In the following Proposition \ref{prop:var_eff}, we show that such variance term has the same order of $h^{-1}$. 

\begin{Proposition}\label{prop:var_eff}
Suppose Assumption \ref{A3} holds. For any $\lambda>0$, $\sum_{\nu\ge1}(1+\lambda/\mu_\nu)^{-2}\asymp h^{-1}$.  
\end{Proposition}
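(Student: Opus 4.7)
Write $a_\nu := (1+\lambda/\mu_\nu)^{-1} = \mu_\nu/(\mu_\nu+\lambda) \in (0,1]$, so that $h^{-1} = \sum_{\nu\ge 1} a_\nu$ and the quantity of interest is $\sum_{\nu\ge 1} a_\nu^2$. The upper bound $\sum_{\nu\ge 1} a_\nu^2 \le \sum_{\nu\ge 1} a_\nu = h^{-1}$ is immediate because each $a_\nu$ lies in $(0,1]$. The work is in the matching lower bound $\sum_{\nu\ge 1} a_\nu^2 \gtrsim h^{-1}$, and the idea is to split the sum at the natural threshold where $\mu_\nu$ crosses $\lambda$, treating the two regimes separately and then using the tail sum condition in Assumption~\ref{A3} to glue them together.

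Concretely, define the critical index $k^\ast := \max\{k\ge 1 : \mu_k \ge \lambda\}$ (with $k^\ast = \infty$ allowed). The plan is:
\begin{enumerate}
\item[(i)] For $\nu \le k^\ast$, since $\mu_\nu \ge \lambda$, one has $a_\nu \ge 1/2$, hence
\[
\sum_{\nu\ge 1} a_\nu^2 \;\ge\; \sum_{\nu \le k^\ast} a_\nu^2 \;\ge\; k^\ast/4.
\]
\item[(ii)] For $\nu > k^\ast$, use $a_\nu \le \mu_\nu/\lambda$ to get $\sum_{\nu > k^\ast} a_\nu \le \lambda^{-1}\sum_{\nu > k^\ast}\mu_\nu$. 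Here I apply Assumption~\ref{A3} at $k = k^\ast + 1$:
\[
\sum_{\nu > k^\ast+1}\mu_\nu \;\le\; C\,(k^\ast+1)\,\mu_{k^\ast+1} \;<\; C\,(k^\ast+1)\,\lambda,
\]
using the definition of $k^\ast$ in the last step. Adding the single term $\mu_{k^\ast+1}<\lambda$ yields $\sum_{\nu>k^\ast}\mu_\nu \lesssim (k^\ast+1)\lambda$, so $\sum_{\nu > k^\ast} a_\nu \lesssim k^\ast + 1$.
\item[(iii)] Combine (i) and (ii): $h^{-1} \le k^\ast + \sum_{\nu>k^\ast}a_\nu \lesssim k^\ast + 1$, while $\sum_\nu a_\nu^2 \ge k^\ast/4$. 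Provided $k^\ast \ge 1$ (the regime of interest since $\lambda\to 0$ and $\mu_k\to 0$ in KRR), this gives $\sum_\nu a_\nu^2 \gtrsim h^{-1}$.
\end{enumerate}

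The main obstacle, and the reason Assumption~\ref{A3} enters, is the second regime: one needs a uniform way to show that the tail $\sum_{\nu>k^\ast}\mu_\nu/\lambda$ contributes no more to $h^{-1}$ than the head $k^\ast$ does, so that both $h^{-1}$ and $\sum_\nu a_\nu^2$ are of order $k^\ast$. Without a tail-sum control of the form $\sum_{\nu>k}\mu_\nu = O(k\mu_k)$, the small-$a_\nu$ tail could in principle inflate $h^{-1}$ relative to $\sum_\nu a_\nu^2$, so Assumption~\ref{A3} is precisely what is needed. The degenerate case $k^\ast = 0$ (i.e.\ $\mu_1<\lambda$) corresponds to the non-asymptotic regime where $h^{-1}$ is bounded; in that case $\sum_\nu a_\nu \asymp \mu_1/\lambda$ and $\sum_\nu a_\nu^2 \asymp \mu_1^2/\lambda^2$ by the same tail-sum argument applied at $k=1$, and the two are of the same order up to the constant $\mu_1/\lambda$, which is bounded in the working regime $\lambda \lesssim \mu_1$ tacitly used throughout the paper.
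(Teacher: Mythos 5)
Your proof is correct and follows essentially the same route as the paper's: both split the spectrum at the critical index where $\mu_\nu$ crosses $\lambda$ (your $k^\ast$ is the paper's $s_\lambda$), show the head contributes $\asymp k^\ast$ to both sums, and invoke Assumption~\ref{A3}'s tail-sum control to show the tail beyond $k^\ast$ contributes only $O(k^\ast)$ more. The one small streamlining you make is noting the trivial upper bound $\sum_\nu a_\nu^2 \le \sum_\nu a_\nu = h^{-1}$ from $a_\nu\in(0,1]$, whereas the paper sandwiches $\sum_\nu a_\nu^2$ against $s_\lambda$ on both sides; your side remark about the degenerate case $k^\ast=0$ is also a bit more candid than the paper (which silently assumes $\lambda \lesssim \mu_1$), but both proofs operate in that regime.
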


Define $Pf=E_X\{f(X)\}$, $P_j f=n^{-1}\sum_{i\in I_j} f(X_i)$ and 
\[
\xi_j=\sup_{\substack{f,g\in\mathcal{H}\\ \|f\|=\|g\|=1}}|P_jfg-Pfg|,\,\, 
1\le j\le s.
\]
Here, $\xi_j$ is the supremum of the empirical processes based on subsample $j$. The quantity $\max_{1\leq j\leq s}\xi_j$ plays a vital role in determining the critical upper bound of $s$ to guarantee statistical optimality. As shown in our main theorems, a sharper bound of $\xi_j$ directly leads to an improved upper bound of $s$. Assumption \ref{A4} provides a concentration bound for $\xi_j$, and says that $\xi_j$ are uniformly bounded by $\sqrt{\frac{\log^b{N}}{nh^a}}$, 
$a,b$ are constants 
that are specified in various kernels. Verification of Assumption \ref{A4} is deferred to
Section \ref{sec:two:examples} in concrete settings based on empirical processes methods, where the values of $a,b$ will be explicitly specified.
\begin{Assumption}\label{A4} There exist nonnegative constants $a,b$ such that
$$\max_{1\le j\le s}\xi_j=O_P\left(\sqrt{\frac{\log^b{N}}{nh^a}}\right).$$
\end{Assumption}

\subsection{Minimax optimal estimation}\label{sec:est}
In this section, we derive a general error bound for $\bar{f}$.
Let $\textbf{X}_j=\{X_i: i\in I_j\}$ and $\textbf{X}=\{\textbf{X}_1,\ldots,\textbf{X}_s\}$. 
Suppose that (\ref{krr:model}) holds under $f=f_0$. 
For convenience, let $\mathcal{P}_\lambda$ be a self-adjoint operator from $\mathcal{H}$
to itself such that $\langle \mathcal{P}_\lambda f,g\rangle=\lambda\langle f,g\rangle_{\mathcal{H}}$
for all $f,g\in\mathcal{H}$. The existence of $\mathcal{P}_\lambda$ follows by \cite[Proposition 2.1]{shang2013local}. We first 
obtain a uniform error bound for $\widehat{f}_j$'s in the following Lemma \ref{basic:thm:estimation}.
\begin{Lemma}\label{basic:thm:estimation}
Suppose Assumptions \ref{A1},\ref{A3},\ref{A4} are satisfied
and 
$\log^b{N}=o(nh^a)$ with $a,b$ given in Assumption \ref{A4}.
Then with probability approaching one,
for any $1\le j\le s$,
\begin{eqnarray}
E\{\|\widehat{f}_j-E\{\widehat{f}_j|\textbf{X}_j\}-\frac{1}{n}\sum_{i\in I_j}\epsilon_i K_{X_i}\|^2
|\textbf{X}_j\}&\le& \frac{4 c_\pi  c_\varphi^2\xi_j^2}{nh},\label{basic:thm:estimation:conc:-1}\\
\|E\{\widehat{f}_j|\textbf{X}_j\}-f_0+\mathcal{P}_\lambda f_0\|
&\le& 2\xi_j\lambda^{1/2}\|f_0\|_{\mathcal{H}}
\label{basic:thm:estimation:conc:0}
\end{eqnarray}

\end{Lemma}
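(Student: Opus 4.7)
The plan is to derive and then iterate a linear fixed-point equation satisfied by $\widehat{f}_j-f_0$. First I would write the first-order optimality condition for the penalised least-squares problem by taking the Fréchet derivative of $\ell_{j,\lambda}$ in an arbitrary direction $g\in\mathcal{H}$. Using the reproducing property $g(X_i)=\langle g,K_{X_i}\rangle$ induced by the composite inner product in (\ref{eq:new_norm}) and the operator $\mathcal{P}_\lambda$ to convert $\lambda\langle \widehat{f}_j,g\rangle_{\mathcal{H}}$ into $\langle \mathcal{P}_\lambda\widehat{f}_j,g\rangle$, the KKT equation becomes, as an identity in $\mathcal{H}$,
\[
\mathcal{P}_\lambda\widehat{f}_j+\frac{1}{n}\sum_{i\in I_j}\bigl(\widehat{f}_j(X_i)-f_0(X_i)\bigr)K_{X_i}=\frac{1}{n}\sum_{i\in I_j}\epsilon_i K_{X_i}.
\]

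Next I would introduce the self-adjoint operators $\widehat{T}_j h:=n^{-1}\sum_{i\in I_j}\langle h,K_{X_i}\rangle K_{X_i}$ and $Th:=E\{\langle h,K_X\rangle K_X\}$. Because $\langle Th,g\rangle=V(h,g)$, the decomposition in (\ref{eq:new_norm}) forces $T+\mathcal{P}_\lambda=I$. Adding $\mathcal{P}_\lambda f_0$ to both sides and splitting $\widehat{T}_j=T+(\widehat{T}_j-T)$ rewrites the KKT identity as
\[
(\widehat{f}_j-f_0+\mathcal{P}_\lambda f_0)+(\widehat{T}_j-T)(\widehat{f}_j-f_0)=\frac{1}{n}\sum_{i\in I_j}\epsilon_i K_{X_i}.
\]
The key inequality is $\|(\widehat{T}_j-T)h\|\le \xi_j\|h\|$ for every $h\in\mathcal{H}$: this is essentially the definition of $\xi_j$, since the dual pairing $\langle(\widehat{T}_j-T)h,g\rangle$ equals $(P_j-P)(hg)$.

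To obtain (\ref{basic:thm:estimation:conc:0}) I would take conditional expectation given $\textbf{X}_j$, which kills the noise term, and then combine the contraction above with $\|\mathcal{P}_\lambda f_0\|\le \lambda^{1/2}\|f_0\|_{\mathcal{H}}$ (immediate from the defining property of $\mathcal{P}_\lambda$) and the triangle inequality to obtain a self-bounding inequality for $\|E\{\widehat{f}_j\mid\textbf{X}_j\}-f_0+\mathcal{P}_\lambda f_0\|$ which, once $\xi_j\le 1/2$, can be solved to yield the factor $2\xi_j\lambda^{1/2}\|f_0\|_{\mathcal{H}}$. For (\ref{basic:thm:estimation:conc:-1}) I would subtract the conditional-expectation version of the fixed-point identity and apply the same contraction, yielding $\|\widehat{f}_j-E\{\widehat{f}_j\mid\textbf{X}_j\}-n^{-1}\sum_{i\in I_j}\epsilon_i K_{X_i}\|\le 2\xi_j\,\|n^{-1}\sum_{i\in I_j}\epsilon_i K_{X_i}\|$. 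Squaring, taking conditional expectation, and using $\|K_x\|^2=K(x,x)=\sum_\nu \varphi_\nu(x)^2/(1+\lambda/\mu_\nu)$ together with Assumptions \ref{A1} and \ref{A3} and the definition $h^{-1}=\sum_\nu (1+\lambda/\mu_\nu)^{-1}$ controls the variance by $c_\pi c_\varphi^2/(nh)$, so that multiplying by $4\xi_j^2$ delivers the claim.

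The main obstacle is that the self-bounding/contraction step only closes once $\xi_j<1$, in practice once $\xi_j\le 1/2$, and this must hold \emph{simultaneously} for all $s$ subsamples in order for the later averaging over $j$ to be meaningful. This is exactly why Assumption \ref{A4} is phrased in terms of $\max_{1\le j\le s}\xi_j$ rather than a single $\xi_j$, and why the rate condition $\log^b N=o(nh^a)$ (with $N=ns$) appears in the hypothesis: it forces $\max_{1\le j\le s}\xi_j=o_P(1)$, on which event the uniform bounds (\ref{basic:thm:estimation:conc:-1})--(\ref{basic:thm:estimation:conc:0}) hold.
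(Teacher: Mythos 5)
Your proposal is correct and takes essentially the same approach as the paper: the paper derives the very same contraction $(1-\xi_j)\|\varepsilon_j - e_j\|\le\xi_j\|e_j\|$ (and the analogous bound for the bias term) by Taylor-expanding the penalized objective around the two competing minimizers and adding, which, once paired against $\varepsilon_j-e_j$, is exactly your fixed-point identity $\varepsilon_j-e_j=-(\widehat T_j-T)\varepsilon_j$. Your operator formulation (with $T+\mathcal{P}_\lambda=I$ playing the role of the paper's $DS_\lambda=\mathrm{id}$) lets you read off the norm bound directly rather than through the quadratic-form/Cauchy--Schwarz step, but the two derivations are equivalent.
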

(\ref{basic:thm:estimation:conc:-1}) quantifies the deviation
from $\widehat{f}_j$ to its conditional mean through a higher order remainder term, 
and (\ref{basic:thm:estimation:conc:0}) quantifies the bias of $\widehat{f}_j$. 
Lemma \ref{basic:thm:estimation} immediately leads to the following result on $\bar{f}$. 
Specifically, (\ref{basic:thm:estimation:conc:-1}) and (\ref{basic:thm:estimation:conc:0})
lead to (\ref{cor:thm1:eqn:0}), which, together with the rates of $\sum_{i=1}^N\epsilon_i K_{X_i}$ and $\mathcal{P}_\lambda f_0$ in Lemma \ref{le:prelim}, leads to (\ref{cor:thm1:eqn:1}).
\begin{Theorem}\label{thm:of:thm1}
If the conditions in Lemma \ref{basic:thm:estimation} hold,
then with probability approaching one, 
\begin{eqnarray}
E\{\|\bar{f}-\frac{1}{N}\sum_{i=1}^N\epsilon_i K_{X_i}-f_0+\mathcal{P}_\lambda f_0\|^2|\textbf{X}\}
&\le& 4\left(\frac{ c_\pi  c_\varphi^2}{Nh}+\lambda \|f_0\|^2_{\mathcal{H}}\right)\max_{1\le j\le s}\xi_j^2,
\label{cor:thm1:eqn:0}
\end{eqnarray}
\begin{eqnarray}
E\{\|\bar{f}-f_0\|^2|\textbf{X}\}&\le&\frac{4 c_\pi  c_{\varphi}^2}{Nh}+8\lambda \|f_0\|^2_{\mathcal{H}}.
\label{cor:thm1:eqn:1}
\end{eqnarray}
\end{Theorem}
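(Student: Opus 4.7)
The plan is to reduce Theorem~\ref{thm:of:thm1} to Lemma~\ref{basic:thm:estimation} by writing $\bar f - \frac{1}{N}\sum_{i=1}^N \epsilon_i K_{X_i} - f_0 + \mathcal{P}_\lambda f_0 = \frac{1}{s}\sum_{j=1}^s (A_j + B_j)$, where $A_j := \widehat f_j - E\{\widehat f_j|\textbf{X}_j\} - \frac{1}{n}\sum_{i\in I_j}\epsilon_i K_{X_i}$ is the stochastic remainder controlled by (\ref{basic:thm:estimation:conc:-1}) and $B_j := E\{\widehat f_j|\textbf{X}_j\} - f_0 + \mathcal{P}_\lambda f_0$ is the bias controlled by (\ref{basic:thm:estimation:conc:0}). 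The key structural fact is that each $\widehat f_j$ is a measurable function of the $j$-th subsample $(\textbf{X}_j,\textbf{Y}_j)$ only, so the $A_j$'s are conditionally mean-zero and mutually independent given the full design $\textbf{X}$, while each $B_j$ is $\textbf{X}_j$-measurable.

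Given this setup, I would compute $E\{\|\frac{1}{s}\sum_j (A_j + B_j)\|^2\,|\,\textbf{X}\}$ term by term. The cross term $\langle\frac{1}{s}\sum_j A_j,\frac{1}{s}\sum_j B_j\rangle$ vanishes in conditional expectation; the $A$-variance collapses, by conditional independence of the $A_j$'s across $j$, to $\frac{1}{s^2}\sum_j E\{\|A_j\|^2\,|\,\textbf{X}_j\} \le \frac{4 c_\pi c_\varphi^2}{Nh}\max_j\xi_j^2$ (using $Nh = snh$); and the $B$-contribution is handled by Jensen's inequality, $\|\frac{1}{s}\sum_j B_j\|^2 \le \frac{1}{s}\sum_j\|B_j\|^2 \le 4\lambda\|f_0\|_{\cH}^2\max_j\xi_j^2$. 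Summing these two contributions yields (\ref{cor:thm1:eqn:0}).

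For (\ref{cor:thm1:eqn:1}), the plan is to split $\bar f - f_0 = [\bar f - \frac{1}{N}\sum_i\epsilon_i K_{X_i} - f_0 + \mathcal{P}_\lambda f_0] + \frac{1}{N}\sum_i\epsilon_i K_{X_i} - \mathcal{P}_\lambda f_0$, apply an inequality of the form $\|u+v+w\|^2 \le c_1\|u\|^2 + c_2\|v\|^2 + c_3\|w\|^2$ with weights chosen so that the stated constants $4$ and $8$ emerge, and bound the three pieces separately. The first bracket is controlled by (\ref{cor:thm1:eqn:0}) and is of strictly smaller order than the main terms because Assumption~\ref{A4} combined with $\log^b N = o(nh^a)$ forces $\max_j\xi_j^2 = o_P(1)$. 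For the second piece, one uses $\sigma^2(X_i)\le c_\pi$ together with the pointwise estimate $\|K_x\|^2 = K(x,x) = \sum_\nu \varphi_\nu(x)^2/(1+\lambda/\mu_\nu) \le c_\varphi^2 h^{-1}$ (from Assumption~\ref{A3}) to obtain $E\{\|\frac{1}{N}\sum_i\epsilon_i K_{X_i}\|^2\,|\,\textbf{X}\} \le c_\pi c_\varphi^2/(Nh)$. For the third piece, the Fourier identity $V(\mathcal{P}_\lambda f_0,\varphi_\nu) = V(f_0,\varphi_\nu)\cdot(\lambda/\mu_\nu)/(1+\lambda/\mu_\nu)$ gives $\|\mathcal{P}_\lambda f_0\|^2 \le \lambda\|f_0\|_{\cH}^2$. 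These last two estimates are precisely the kind of rates packaged in Lemma~\ref{le:prelim}, and combining the three contributions produces (\ref{cor:thm1:eqn:1}).

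The main obstacle I expect is bookkeeping rather than insight: one must verify that the $A_j$'s are mean-zero not only conditional on $\textbf{X}_j$ but conditional on the full $\textbf{X}$ (which uses the iid structure of $(X_i,Y_i)$ to drop $\textbf{X}_{-j}$), and that $E\{\widehat f_j\,|\,\textbf{X}_j\}$ is a legitimate random element of $\cH$ so that its inner product with $B_j$ commutes with conditional expectation; the representer theorem provides the required measurability by writing $\widehat f_j$ as a finite linear combination of the $K_{X_i}$, $i\in I_j$. A secondary subtlety is that Lemma~\ref{basic:thm:estimation} holds only on the high-probability event determined by $\max_j\xi_j$, so all the above bounds must be read ``with probability approaching one'' on that event, matching the wording of Theorem~\ref{thm:of:thm1}.
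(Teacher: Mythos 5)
Your proposal is correct and follows essentially the same route as the paper: the paper decomposes $\bar f - f_0^\star - \frac{1}{N}\sum_i\epsilon_i K_{X_i}$ into the same $\frac{1}{s}\sum_j A_j + \frac{1}{s}\sum_j B_j$, invokes conditional independence of the $A_j$'s to collapse the variance term to $\frac{1}{s^2}\sum_j E\{\|A_j\|^2\mid\textbf{X}_j\}$, and bounds the $B$-term via the triangle inequality (your Jensen step gives the same bound). For (\ref{cor:thm1:eqn:1}) the paper simply states it ``immediately follows'' from $\max_j\xi_j^2 = o_P(1)$; your more explicit split into the (\ref{cor:thm1:eqn:0}) remainder, the noise average $\frac{1}{N}\sum_i\epsilon_i K_{X_i}$, and $\mathcal{P}_\lambda f_0$ together with the two estimates from Lemma A.1 is exactly the argument the paper suppresses, and your observation that the remainder is $o_P(1)\times(\text{main terms})$ on the high-probability event is the reason the stated constants $4$ and $8$ suffice.
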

Theorem \ref{thm:of:thm1} is a general result that holds 
for many commonly used kernels. Note that $n=N/s$, the condition $\log^b N = o(nh^a)$  directly implies that as long as $s$ is dominated by $Nh^a/\log^b N$, the conditional mean squared errors can be upper bounded by the variance term $(Nh)^{-1}$ and the squared bias term $\lambda\|f_0\|^2_\mathcal{H}$. %Since $h$ is a function of $\lambda$, the particular $\lambda$ that satisfies $Nh^{-1} \asymp \lambda \|f_0\|^2_\mathcal{H}$ can be
 Then the minimax optimal estimation can be obtained through the particular $\lambda$ that satisfies such bias-variance trade-off; see \cite{bartlett2005local}, \cite{yang2015randomized}. 
 %, that is $Nh^{-1} \asymp \lambda \|f_0\|^2_\mathcal{H}$ (\cite{yang2015randomized}). 
 Section \ref{sec:two:examples} further illustrates concrete and interpretable guarantees on the conditional mean squared errors to particular kernels. 

It is worthy to note that, through the condition of Lemma \ref{basic:thm:estimation} and Theorem \ref{thm:of:thm1}, we build a direct connection between the upper bound of $s$ and the uniform bound of the empirical process $\xi_j$. That is, a tighter upper bound of $s$ can be achieved by a sharper concentration bound of $\max_{1\leq j \leq s} \xi_j$, which is guaranteed by the empirical process methods in this work. For instance, in Section \ref{eg:example1} the smoothing spline regression, we introduce the Green function for equivalent kernels in \cite{eggermont2009maximum} to provide a sharp concentration bound of $\xi_j$ with $a=b=1$. Consequently, we achieve an upper bound for $s$ almost identical to the critical one obtained by \cite{shang2017computational} (upto a logarithmic factor), and improve the one obtained by \cite{zhang2013divide} in polynomial order.

\subsection{Minimax optimal testing}\label{sec:general:testing}
In this section, we derive the asymptotic distribution of
$T_{N,\lambda}:=\|\bar{f}\|^2$ and further investigate its power behavior.
For simplicity, assume that $\sigma^2(x)\equiv\sigma^2$
is known. Otherwise, we can replace $\sigma^2$ by 
its consistent estimator to fulfill our procedure.
We will show that the distributed test statistic $T_{N,\lambda}$ can achieve minimax rate of testing (MRT), provided that the number of divisions $s$ belongs to a suitable range. 
Here, MRT is defined as the minimal distance between the null and the alternative 
 hypotheses such that valid testing is possible.  The range of $s$ is determined based on the criteria that the proposed test statistic can asymptotically achieve correct size and high power.

Before proving consistency of the test statistics $T_{N,\lambda}$, i.e.,
Theorem \ref{basic:thm:testing:null:distr}, let us state a technical lemma.
Define $W(N)=\sum_{1\le i<k\le N}W_{ik}$ with $W_{ik}=2\epsilon_i\epsilon_k K(X_i,X_k)$,
and let $\sigma^2(N)=\textrm{Var}(W(N))$.
Define the empirical kernel matrix   $\textbf{K}=[K(X_i,X_j)]_{i,j=1}^N$ and $\boldsymbol{\epsilon}=(\epsilon_1,\ldots,\epsilon_N)^T$.
\begin{Lemma}\label{basic:lemma:for:testing}
Suppose Assumptions \ref{A1},\ref{A2}, \ref{A3}, \ref{A4} are all satisfied, and $N\to\infty$,
$h=o(1)$, $Nh^2 \to \infty$. 
Then it holds that 
\begin{equation}\label{basic:lemma:for:testing:eqn:0}
\boldsymbol{\epsilon}'\textbf{K}\boldsymbol{\epsilon}=\sigma^2Nh^{-1}+W(N)+O_P(\sqrt{Nh^{-2}}).
\end{equation}
Furthermore, as $N\to\infty$, $\frac{W(N)}{\sigma(N)}\overset{d}{\longrightarrow}N(0,1)$, where 
$\sigma^2(N)=
2\sigma^4 N(N-1)\sum_{\nu\ge 1}\frac{1}{(1+\lambda/\mu_\nu)^2}\asymp N^2 h^{-1}$. 
\end{Lemma}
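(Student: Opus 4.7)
The plan is to decompose $\boldsymbol{\epsilon}'\textbf{K}\boldsymbol{\epsilon}$ into its diagonal and off-diagonal contributions, control the diagonal piece by a direct moment bound, and then establish asymptotic normality of the off-diagonal piece $W(N)$ through a martingale CLT.

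\textbf{Step 1: identity (\ref{basic:lemma:for:testing:eqn:0}).} Split
\[
\boldsymbol{\epsilon}'\textbf{K}\boldsymbol{\epsilon}=\sum_{i=1}^N \epsilon_i^2 K(X_i,X_i)+W(N).
\]
The Mercer expansion $K(x,x)=\sum_\nu \varphi_\nu(x)^2/(1+\lambda/\mu_\nu)$ together with the $L^2_\pi$-orthonormality $E[\varphi_\nu(X)^2]=1$ gives $E[K(X,X)]=h^{-1}$, so $E[\sum_i \epsilon_i^2 K(X_i,X_i)]=\sigma^2 N h^{-1}$. Assumption \ref{A3} yields $K(X,X)\le c_\varphi^2 h^{-1}$, and together with Assumption \ref{A2} this gives $\mathrm{Var}(\epsilon^2 K(X,X))=O(h^{-2})$. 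Chebyshev then delivers the $O_P(\sqrt{Nh^{-2}})$ remainder.

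\textbf{Step 2: variance formula for $W(N)$.} By independence and mean-zero of $\epsilon_i$, cross-products vanish and $\sigma^2(N)=4\binom{N}{2}\sigma^4 E[K(X,Y)^2]$ for independent $X,Y\sim\pi$. Expanding $K(X,Y)^2$ in Mercer form and invoking orthonormality produces $E[K(X,Y)^2]=\sum_\nu (1+\lambda/\mu_\nu)^{-2}$, hence $\sigma^2(N)=2\sigma^4 N(N-1)\sum_\nu(1+\lambda/\mu_\nu)^{-2}$, which equals $\asymp N^2 h^{-1}$ by Proposition \ref{prop:var_eff}.

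\textbf{Step 3: martingale CLT.} Let $\mathcal{F}_k=\sigma(X_1,\epsilon_1,\ldots,X_k,\epsilon_k)$ and
\[
D_k=2\epsilon_k\sum_{i<k}\epsilon_i K(X_i,X_k),
\]
so $W(N)=\sum_{k=2}^N D_k$ is a martingale difference array. I would apply the martingale CLT of Hall and Heyde, which requires: (i) $V_N/\sigma^2(N)\to 1$ in probability, where $V_N=\sum_k E[D_k^2\mid\mathcal{F}_{k-1}]$; and (ii) a conditional Lindeberg condition, strengthened here to the Lyapunov form $\sum_k E[D_k^4]=o(\sigma^4(N))$. For (ii): the uniform bound $|K(x,y)|\le c_\varphi^2 h^{-1}$ yields $E[K(X,Y)^4]\le c_\varphi^4 h^{-2}E[K(X,Y)^2]=O(h^{-3})$, and a standard $L^4$ estimate for sums of independent mean-zero variables gives $E[D_k^4]=O(k h^{-3}+k^2 h^{-2})$; summing yields $\sum_k E[D_k^4]=O(N^2 h^{-3}+N^3 h^{-2})$, which is $o(N^4 h^{-2})=o(\sigma^4(N))$ under $Nh\to\infty$ (implied by the hypothesis $Nh^2\to\infty$).

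\textbf{Main obstacle.} The delicate step is (i). A direct calculation (using $E[\epsilon_i\epsilon_j]=\sigma^2\delta_{ij}$) shows $E[V_N]=\sigma^2(N)$ exactly, so it suffices to prove $\mathrm{Var}(V_N)=o(\sigma^4(N))$. Unconditioning $\epsilon_k,X_k$ one finds
\[
V_N=4\sigma^2\sum_{i,j}\epsilon_i\epsilon_j\,(N-\max(i,j))\,g(X_i,X_j),\qquad g(u,v):=\sum_\nu \frac{\varphi_\nu(u)\varphi_\nu(v)}{(1+\lambda/\mu_\nu)^2},
\]
which is a quadratic form in $\epsilon$ with random coefficients. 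I would bound $\mathrm{Var}(V_N)$ by splitting into diagonal and off-diagonal index patterns and controlling $E[g(X,Y)^2]=\sum_\nu(1+\lambda/\mu_\nu)^{-4}$ and $E[g(X,X)^2]$ using Assumption \ref{A3} (both the uniform boundedness of $\varphi_\nu$ and the tail-sum bound on the eigenvalues). The resulting estimate is $\mathrm{Var}(V_N)=O(N^3 h^{-2})$, and the hypothesis $Nh^2\to\infty$ is precisely what forces $\mathrm{Var}(V_N)/\sigma^4(N)=O((Nh^2)^{-1}\cdot\text{const})\to 0$, closing the argument via Chebyshev.
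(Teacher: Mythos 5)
Your decomposition into diagonal plus $W(N)$, the moment bound on the diagonal term in Step~1, and the variance formula in Step~2 all match the paper exactly. For the CLT, however, you take a genuinely different route: the paper cites de~Jong's central limit theorem for clean generalized quadratic forms (\cite{de1987central}) and verifies its fourth-moment conditions $G_I,G_{II},G_{IV}=o(\sigma^4(N))$ by crude uniform bounds on $K$, whereas you apply the Hall--Heyde martingale CLT directly with filtration $\mathcal F_k=\sigma(X_1,\epsilon_1,\ldots,X_k,\epsilon_k)$, checking convergence of the conditional variance $V_N$ and a Lyapunov condition. These are two presentations of essentially the same underlying argument (de~Jong's theorem is itself proved by the martingale method, and the $G$-terms are exactly the pieces that arise in $\mathrm{Var}(V_N)$ and $\sum_k E[D_k^4]$); citing de~Jong pre-packages the bookkeeping, while your hands-on route is self-contained and uses only a textbook CLT.

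There is, however, an arithmetic slip in your ``main obstacle'' paragraph that you should fix. Splitting $V_N=4\sigma^2\sum_{i,j}(N-\max(i,j))\,\epsilon_i\epsilon_j\,g(X_i,X_j)$ into diagonal ($i=j$) and off-diagonal pieces, the \emph{diagonal} piece has variance $O\!\left(N^3h^{-2}\right)$ as you claim (since $g(X,X)\le c_\varphi^2\sum_\nu(1+\lambda/\mu_\nu)^{-2}\asymp h^{-1}$ and $\sum_i(N-i)^2\asymp N^3$), but the \emph{off-diagonal} piece contributes
\[
\mathrm{Var}\!\Bigl(8\sigma^2\!\!\sum_{i<j}(N-j)\epsilon_i\epsilon_j g(X_i,X_j)\Bigr)
=64\sigma^8\sum_{i<j}(N-j)^2\,E[g(X,Y)^2]
\asymp N^4\sum_\nu(1+\lambda/\mu_\nu)^{-4}=O(N^4h^{-1}),
\]
and this dominates whenever $Nh\to\infty$. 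So the correct bound is $\mathrm{Var}(V_N)=O(N^3h^{-2}+N^4h^{-1})$, not $O(N^3h^{-2})$. The conclusion is unaffected because $N^4h^{-1}/\sigma^4(N)\asymp h\to0$, but your claim that ``$Nh^2\to\infty$ is precisely what forces $\mathrm{Var}(V_N)/\sigma^4(N)\to0$'' is not right: under your own estimates condition~(i) follows from $h\to0$ alone, and the Lyapunov condition~(ii) needs only $N^2h\to\infty$. The full strength of $Nh^2\to\infty$ is used by the paper only because its de~Jong bound $G_{II}=O(N^3h^{-4})$ is cruder (it bounds $K$ uniformly by $c_\varphi^2h^{-1}$ rather than integrating); your sharper moment computations actually show it is not needed for this particular lemma.
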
  
The following theorem shows that $T_{N,\lambda}$ is asymptotically normal
under $H_0$. The key condition to obtain such a result is
$\log^b N = o(nh^{a+1})$, where $a,b$ are determined through the uniform bound of $\xi_j$ in Assumption \ref{A4}. 
This condition in turn leads to upper bounds for $s$ to achieve MRT; see Section \ref{sec:two:examples} for detailed illustrations.
\begin{Theorem}\label{basic:thm:testing:null:distr}
Suppose Assumptions \ref{A1} to \ref{A4} are all satisfied, and as $N\to\infty$,
$h=o(1)$, $Nh^2 \to \infty$, and $\log^b N = o(nh^{a+1})$. 
Then, as $N\to\infty$,
\[
\frac{N^2}{\sigma(N)}\left(T_{N,\lambda}-\frac{\sigma^2}{Nh}\right)
\overset{d}{\longrightarrow}N(0,1).
\]
\end{Theorem}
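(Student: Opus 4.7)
The plan is to approximate $\bar f$ under $H_0$ by the linear statistic $\bar g := N^{-1}\sum_{i=1}^N \epsilon_i K_{X_i}$ and then read off the limit of $\|\bar g\|^2$ from the quadratic form $\boldsymbol\epsilon^\top\mathbf K\boldsymbol\epsilon$ handled in Lemma~\ref{basic:lemma:for:testing}. Under $H_0$ we have $f_0 = 0$, so $\mathcal P_\lambda f_0 = 0$ and inequality \eqref{cor:thm1:eqn:0} of Theorem~\ref{thm:of:thm1}, combined with Assumption~\ref{A4} and $N = ns$, gives
\[
\|\bar f - \bar g\|^2 = O_P\!\Big(\tfrac{\log^b N}{n N h^{a+1}}\Big).
\]

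First I would use the reproducing property $\langle K_{X_i}, K_{X_j}\rangle = K(X_i, X_j)$ to write $\|\bar g\|^2 = N^{-2}\boldsymbol\epsilon^\top\mathbf K\boldsymbol\epsilon$, and invoke Lemma~\ref{basic:lemma:for:testing} to obtain
\[
\|\bar g\|^2 = \frac{\sigma^2}{Nh} + \frac{W(N)}{N^2} + O_P\!\big(N^{-3/2} h^{-1}\big).
\]
Since $\sigma(N) \asymp N h^{-1/2}$ and $Nh \to \infty$ (forced by $h \to 0$ and $Nh^2 \to \infty$), multiplying through by $N^2/\sigma(N) \asymp N\sqrt{h}$ shows the third term contributes $O_P((Nh)^{-1/2}) = o_P(1)$, and the CLT part of Lemma~\ref{basic:lemma:for:testing} gives $(N^2/\sigma(N))(\|\bar g\|^2 - \sigma^2/(Nh)) \overset{d}{\longrightarrow} N(0,1)$.

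Next I would expand
\[
T_{N,\lambda} = \|\bar g\|^2 + 2\langle \bar g,\,\bar f - \bar g\rangle + \|\bar f - \bar g\|^2
\]
and show that, after scaling by $N\sqrt{h}$, the two correction terms are $o_P(1)$. The cross term is controlled via Cauchy--Schwarz by $\|\bar g\|\cdot\|\bar f-\bar g\|$; since $\|\bar g\| = O_P((Nh)^{-1/2})$, scaling by $N\sqrt{h}$ yields
\[
O_P\!\Big(N\sqrt{h}\cdot (Nh)^{-1/2}\sqrt{\log^b N/(nNh^{a+1})}\Big) = O_P\!\big(\sqrt{\log^b N/(nh^{a+1})}\big),
\]
which is $o_P(1)$ exactly under the theorem's hypothesis $\log^b N = o(nh^{a+1})$. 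The squared correction, scaled the same way, becomes $O_P(\log^b N/(nh^{a+1/2}))$, which is a strictly weaker condition since $h \to 0$. Slutsky's theorem then yields the stated normal limit.

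The main obstacle is the cross term $\langle \bar g,\,\bar f - \bar g\rangle$: it is what pins down the sharp condition $\log^b N = o(n h^{a+1})$. A cruder bound that does not exploit the $(Nh)^{-1/2}$ rate of $\|\bar g\|$ (and instead treats $\bar g$ and $\bar f - \bar g$ symmetrically by bounding $T_{N,\lambda} - \|\bar g\|^2$ by $2\|\bar f\|\cdot\|\bar f - \bar g\|$) would lose a factor $(Nh)^{-1/2}$ and force a stronger restriction on $s$. The whole argument thus hinges on combining the deterministic variance scale of $\bar g$ with the non-symmetric remainder bound from Theorem~\ref{thm:of:thm1}, so that the empirical-process rate $\max_j \xi_j^2 = O_P(\log^b N/(nh^a))$ translates directly into the theorem's sharp condition.
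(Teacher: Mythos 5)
Your proposal is correct and recovers the theorem's sharp condition $\log^b N = o(nh^{a+1})$, using the same overall decomposition $T_{N,\lambda} = \|\bar g\|^2 + 2\langle\bar g, \bar f - \bar g\rangle + \|\bar f - \bar g\|^2$ and Lemma~\ref{basic:lemma:for:testing} for the leading term. The genuine difference is in the cross term $2\langle\bar g, \bar f - \bar g\rangle = \tfrac{2}{s^2}\sum_{j,l}\langle\hat f_j - e_j, e_l\rangle$. The paper splits it into diagonal ($j=l$) and off-diagonal ($j\ne l$) parts, handling the diagonal by per-subsample Cauchy--Schwarz and the off-diagonal sums $R_1, R_2$ by a conditional second-moment calculation $E\{R_1^2\mid\mathbf X\}=\sum_{j<l}E\{|\langle\hat f_j - e_j, e_l\rangle|^2\mid\mathbf X\}$, which tacitly relies on the conditional orthogonality of distinct pairs (valid under $H_0$ because $E\{\hat f_j - e_j\mid\mathbf X_j\}=\tilde f_j = 0$ and the subsamples' errors are conditionally independent given $\mathbf X$). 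You instead apply a single global Cauchy--Schwarz $|\langle\bar g,\bar f - \bar g\rangle|\le\|\bar g\|\cdot\|\bar f - \bar g\|$, plugging in the $L^2$ rates $\|\bar g\| = O_P((Nh)^{-1/2})$ (from $E\{K(X,X)\}=h^{-1}$, Assumption~\ref{A1}, and Markov) and $\|\bar f - \bar g\|^2 = O_P(\log^b N/(Nnh^{a+1}))$ (from \eqref{cor:thm1:eqn:0} with $f_0=0$). Both routes produce the identical bound $O_P\bigl(\tfrac{1}{Nh}\sqrt{\log^b N/(nh^a)}\bigr)$: the cancellation you harvest ``inside'' $\|\bar g\|$ and $\|\bar f - \bar g\|$ separately is precisely what the paper harvests via the conditional variance of $R_1$. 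Your version is shorter and avoids the conditional-orthogonality bookkeeping, and both satisfy the theorem equally well. One small inaccuracy in your closing remark: bounding $|T_{N,\lambda}-\|\bar g\|^2|$ symmetrically by $2\|\bar f\|\cdot\|\bar f-\bar g\|$ does \emph{not} lose a factor $(Nh)^{-1/2}$, because under $H_0$ one has $\|\bar f\| = O_P((Nh)^{-1/2})$ as well (indeed $\|\bar f\|\le\|\bar g\|+\|\bar f-\bar g\|$); the sharpness hinges on using an $L^2$-rate for both factors rather than a sup-type bound, not on the particular choice between $\bar g$ and $\bar f$.
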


By Theorem \ref{basic:thm:testing:null:distr}, we can define an
asymptotic testing rule with $(1-\alpha)$ significance level as follows:
\[
\psi_{N,\lambda}=I\left(|T_{N,\lambda}-\sigma^2/(Nh)|\ge z_{1-\alpha/2}\sigma(N)/N^2\right),
\]
where $z_{1-\alpha/2}$ is the $(1-\alpha/2)\times 100$ percentile of standard normal distribution. 

For any $f\in \mathcal{H}$, define 
\begin{align*}
b_{N,\lambda}& =(\lambda^{1/2}\|f\|_\mathcal{H}+(Nh)^{-1/2})\sqrt{\frac{\log^b{N}}{nh^a}},\,\,\,\,\textrm{and}\\
d_{N,\lambda}& =\lambda^{1/2}\|f\|_\mathcal{H}+(Nh^{1/2})^{-1/2}+N^{-1/2}+ b_{N,\lambda}^{1/2}(Nh)^{-1/4}+b_{N,\lambda}.
\end{align*}

$d_{N,\lambda}$ is used to measure the distance between the null and the alternative hypotheses. 
The following Theorem \ref{thm:power:random:design} shows that, if the alternative signal $f$ is separated from zero by an order $d_{N,\lambda}$, then the proposed test statistic asymptotically achieves
 high power. To achieve optimal testing, it is sufficient to minimize $d_{N,\lambda}$. As long as $s$ is dominated by $(Nh^{a+1}/\log^b N)$, $d_{N,\lambda}$ can be simplified as 

\begin{equation}\label{eq:trade_off:testing}
d_{N,\lambda}\asymp\quad \underbrace{\lambda^{1/2}\|f\|_\mathcal{H}}_{\textrm{Bias of }\bar{f}} \quad + \underbrace{(Nh^{1/2})^{-1/2}}_{\textrm{Standard deviation of }T_{N,\lambda}}
\end{equation}
Then, MRT can be achieved by selecting $\lambda$ to balance the tradeoff between the bias of $\bar{f}$ and the standard derivation of $T_{N,\lambda}$; see \cite{ingster1993asymptotically}, \cite{wei2017local}. It is worth noting that, such a tradeoff in (\ref{eq:trade_off:testing}) for testing is different from the bias-variance tradeoff in (\ref{thm:of:thm1}) for estimation, thus leading to different optimal testing rate. 

\begin{Theorem}\label{thm:power:random:design}
If the conditions in Theorem \ref{basic:thm:testing:null:distr} hold, then
for any $\varepsilon>0$, there exist $C_\varepsilon$ and $N_\varepsilon$ s.t.
\[
\inf_{\substack{
\|f\|\ge C_\varepsilon d_{N,\lambda}}} P_f(\psi_{N,\lambda}=1)\ge 1-\varepsilon,
\,\,\,\,\textrm{for any $N\ge N_\varepsilon$.}
\]
\end{Theorem}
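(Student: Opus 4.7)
The plan is to decompose $\bar f$ using Theorem \ref{thm:of:thm1}, expand $T_{N,\lambda}=\|\bar f\|^2$ quadratically, and show that each stochastic term is dominated by the signal $\|f-\mathcal{P}_\lambda f\|^2$ when $\|f\|\ge C_\varepsilon d_{N,\lambda}$. First I would write
\[
\bar f \;=\; (f-\mathcal{P}_\lambda f) \;+\; V \;+\; R, \qquad V:=\tfrac{1}{N}\sum_{i=1}^N \epsilon_i K_{X_i},
\]
where Theorem \ref{thm:of:thm1} gives $E\{\|R\|^2\mid \mathbf{X}\}\le 4(c_\pi c_\varphi^2/(Nh)+\lambda\|f\|_\mathcal H^2)\max_j\xi_j^2$, so by Assumption \ref{A4}, $\|R\|=O_P(b_{N,\lambda})$ on the high-probability event of Lemma \ref{basic:thm:estimation}. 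Expanding $T_{N,\lambda}$,
\[
T_{N,\lambda}-\tfrac{\sigma^2}{Nh}
\;=\; \|f-\mathcal{P}_\lambda f\|^2 \;+\; \bigl(\|V\|^2-\tfrac{\sigma^2}{Nh}\bigr) \;+\; \|R\|^2 \;+\; 2\langle f-\mathcal{P}_\lambda f,V\rangle \;+\; 2\langle f-\mathcal{P}_\lambda f,R\rangle \;+\; 2\langle V,R\rangle.
\]

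Next I would control each of the five stochastic terms. Since $\|V\|^2=N^{-2}\boldsymbol{\epsilon}^\top\mathbf{K}\boldsymbol{\epsilon}$ by the reproducing property, Lemma \ref{basic:lemma:for:testing} yields $\|V\|^2-\sigma^2/(Nh)=O_P(\sigma(N)/N^2)=O_P((Nh^{1/2})^{-1})$, matching the $(Nh^{1/2})^{-1/2}$ summand in $d_{N,\lambda}$. For the first cross term, $\langle f-\mathcal{P}_\lambda f,V\rangle=N^{-1}\sum_i\epsilon_i(f-\mathcal{P}_\lambda f)(X_i)$ has conditional variance bounded by $\sigma^2\|f-\mathcal{P}_\lambda f\|^2/N\le \sigma^2\|f\|^2/N$ (using $V(g,g)\le\|g\|^2$ and the non-expansiveness $\|\mathcal{P}_\lambda f\|\le \lambda^{1/2}\|f\|_\mathcal H$, which is immediate from the eigen-expansion and $(\mathcal{P}_\lambda f)_\nu=\frac{\lambda/\mu_\nu}{1+\lambda/\mu_\nu}f_\nu$), giving $|\langle f-\mathcal{P}_\lambda f,V\rangle|=O_P(\|f\|/\sqrt{N})$, matching $N^{-1/2}$ in $d_{N,\lambda}$. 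Cauchy--Schwarz gives $|\langle f-\mathcal{P}_\lambda f,R\rangle|\le\|f\|\cdot O_P(b_{N,\lambda})$ and $|\langle V,R\rangle|\le\|V\|\cdot\|R\|=O_P((Nh)^{-1/2}b_{N,\lambda})$, matching the summands $b_{N,\lambda}$ and $b_{N,\lambda}^{1/2}(Nh)^{-1/4}$ (the latter being the geometric mean that bounds the cross). Finally $\|R\|^2=O_P(b_{N,\lambda}^2)$.

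Now I would lower-bound the signal. Because $\|f\|\ge C_\varepsilon d_{N,\lambda}\ge C_\varepsilon\lambda^{1/2}\|f\|_\mathcal H$, the triangle inequality and $\|\mathcal{P}_\lambda f\|\le\lambda^{1/2}\|f\|_\mathcal H$ give
\[
\|f-\mathcal{P}_\lambda f\|^2 \;\ge\; \bigl(1-C_\varepsilon^{-1}\bigr)^2 \|f\|^2.
\]
Collecting everything, on a set of probability at least $1-\varepsilon/2$ (for $N$ large),
\[
\bigl|T_{N,\lambda}-\tfrac{\sigma^2}{Nh}\bigr|
\;\ge\; (1-C_\varepsilon^{-1})^2\|f\|^2 \;-\; C'\bigl[(Nh^{1/2})^{-1}+\|f\|/\sqrt N+\|f\|b_{N,\lambda}+b_{N,\lambda}^2+(Nh)^{-1/2}b_{N,\lambda}\bigr],
\]
and the bracket is precisely $O(d_{N,\lambda}^2+\|f\|d_{N,\lambda})\le C''\|f\|d_{N,\lambda}/C_\varepsilon$. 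Choosing $C_\varepsilon$ sufficiently large, the right-hand side exceeds $z_{1-\alpha/2}\sigma(N)/N^2\asymp(Nh^{1/2})^{-1}$, so $\psi_{N,\lambda}=1$ with probability at least $1-\varepsilon$.

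The main obstacle is bookkeeping: verifying that each of the six error orders $(Nh^{1/2})^{-1}$, $\|f\|N^{-1/2}$, $b_{N,\lambda}^2$, $\|f\|b_{N,\lambda}$, and $(Nh)^{-1/2}b_{N,\lambda}$ is dominated by a corresponding summand of $\|f\|\cdot d_{N,\lambda}$ when $\|f\|\ge C_\varepsilon d_{N,\lambda}$. The subtlety is the cross term $\langle V,R\rangle$, which forces the geometric-mean summand $b_{N,\lambda}^{1/2}(Nh)^{-1/4}$ in the definition of $d_{N,\lambda}$; handling it requires applying Cauchy--Schwarz rather than independence, since $V$ and $R$ share the $\epsilon_i$'s, and then using $2\sqrt{ab}\le a/\eta+\eta b$ to split the product into pieces absorbable into the signal.
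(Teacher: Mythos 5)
Your proposal is correct and follows essentially the same route as the paper's proof: the same decomposition $\bar f = (f-\mathcal{P}_\lambda f) + V + R_f$, the same expansion of $T_{N,\lambda}-\sigma^2/(Nh)$ into a signal term $\|f-\mathcal{P}_\lambda f\|^2$ plus cross/remainder terms, and the same term-by-term bounds (Lemma~\ref{basic:lemma:for:testing} for $\|V\|^2-\sigma^2/(Nh)$, Chebyshev for $\langle f-\mathcal{P}_\lambda f,V\rangle$, and Theorem~\ref{thm:of:thm1}/Assumption~\ref{A4} plus Cauchy--Schwarz for the terms involving $R_f$), yielding the five constraints on $\|f\|$ that assemble into $d_{N,\lambda}$. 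The only cosmetic difference is that the paper discards the nonnegative $\|R_f\|^2$ via a one-sided inequality rather than bounding it, which is a minor simplification; your final closing step where $2\sqrt{ab}\le a/\eta+\eta b$ is invoked is unnecessary, since the Cauchy--Schwarz bound $|\langle V,R_f\rangle|\le\|V\|\|R_f\|=O_P((Nh)^{-1/2}b_{N,\lambda})$ already matches the square of the summand $b_{N,\lambda}^{1/2}(Nh)^{-1/4}$ directly.
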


Section \ref{sec:two:examples} will develop upper bounds for $s$ 
in various concrete examples based on the above general theorems. 
Our results will indicate that the ranges for $s$ to achieve MRT are dramatically different from
ones to achieve optimal estimation.

\subsection{Examples}\label{sec:two:examples}
In this section, we derive upper bounds for $s$ in four featured examples to achieve optimal estimation/testing,
based on the general results obtained in Sections \ref{sec:est} and \ref{sec:general:testing}.
Our examples cover the settings of univariate, multivariate and diverging-dimensional designs.

\subsubsection{Example 1: Smoothing spline regression}\label{eg:example1}

Suppose $\mathcal{H} = \{f\in S^m(\mathbb{I}): \|f\|_{\mathcal{H}}\leq C\}$ for a constant $C>0$, where 
$S^m(\mathbb{I})$ is the $m$th order Sobolev space on $\mathbb{I}\equiv [0,1]$, i.e.,
\begin{align*}
S^m(\mathbb{I})=\big\{f\in L^2(\mathbb{I})|\, &f^{(j)} \mbox{ are abs. cont. for}\;  j=0,1,\ldots,m-1,\;
\\
&\mbox{and}\; \int_{\mathbb{I}} |f^{(m)}(x)|^2dx<\infty\big\},
\end{align*}
and $\|f\|_{\mathcal{H}}=\int_{\mathbb{I}} |f^{(m)}(x)|^2dx$. 
Then model (\ref{krr:model}) becomes the usual smoothing spline regression.
In addition to Assumption \ref{A1}, we assume that 
\begin{equation}\label{quasi:uniform:assumption}
 c_\pi ^{-1}\le\pi(x)\le  c_\pi ,\,\,\textrm{for any $x\in\mathbb{I}$.}
\end{equation}
We call the design satisfying (\ref{quasi:uniform:assumption}) as quasi-uniform, a common assumption on many statistical problems; see \cite{eggermont2009maximum}. Quasi-uniform assumption excludes cases where design density is (nearly) zero at certain data points, which may cause estimation inaccuracy at those points. 

It is known that when $m>1/2$, $S^m(\mathbb{I})$ is a RKHS under the inner product $\langle\cdot,\cdot\rangle$; see \cite{shang2013local}, \cite{fan2001generalized}. Meanwhile, Assumption \ref{A3}
holds with kernel eigenvalues $\mu_\nu\asymp \nu^{-2m}$, $\nu\ge1$.
Hence, Proposition \ref{prop:var_eff} holds with $h\asymp \lambda^{1/(2m)}$. 
We next provide a sharp concentration inequality to bound $\xi_j$. 
%It follows from \cite[Corollary 5.41]{eggermont2009maximum} that the following holds.
\begin{Proposition}\label{prop:EL09}
Under (\ref{quasi:uniform:assumption}),
there exist universal positive constants $c_1,c_2,c_3$ such that for any $1\le j\le s$,
\[
P\left(\xi_j\ge t\right)\le 2n\exp\left(-\frac{nht^2}{c_1+c_2t}\right),\,\,
\textrm{for all $t\ge c_3(nh)^{-1}$.}
\]
\end{Proposition}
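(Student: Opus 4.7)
The plan is to reduce the bilinear supremum to a quadratic one via polarization, use the equivalent-kernel representation for smoothing splines to extract a uniform envelope and variance, and then apply Bernstein's inequality together with a finite-dimensional discretization on the data.

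\textbf{Reduction and uniform estimates.} By the polarization identity $4fg=(f+g)^2-(f-g)^2$ and the triangle inequality, $\xi_j\le 2\sup_{\|f\|\le 1}|(P_j-P)f^2|$, so it suffices to bound the latter quadratic supremum. The key ingredient is the uniform pointwise estimate $K(x,x)\asymp h^{-1}$ on $[0,1]$: the upper bound follows from the Mercer expansion $K(x,x)=\sum_\nu\varphi_\nu(x)^2/(1+\lambda/\mu_\nu)$ together with Assumption \ref{A3} and the spline eigenvalue scaling $\mu_\nu\asymp\nu^{-2m}$, while the matching lower bound under quasi-uniform $\pi$ is obtained from the Green-function/equivalent-kernel construction of \cite{eggermont2009maximum}. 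The reproducing property $f(x)=\langle f,K_x\rangle$ then gives $\|f\|_\infty\le \|f\|\sqrt{K(x,x)}\lesssim h^{-1/2}\|f\|$, so the class $\mathcal{G}=\{f^2:\|f\|\le 1\}$ has uniform envelope $U\asymp h^{-1}$ and second moment $\mathrm{Var}(f^2(X))\le \|f\|_\infty^2\,Pf^2\lesssim h^{-1}$.

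\textbf{Bernstein plus discretization.} For a single $f$ in the unit ball, Bernstein's inequality applied to the i.i.d.\ sum $n(P_j-P)f^2=\sum_{i\in I_j}(f(X_i)^2-Pf^2)$ with envelope $U\asymp h^{-1}$ and variance $\sigma^2\asymp h^{-1}$ yields exactly $P(|(P_j-P)f^2|\ge t)\le 2\exp\bigl(-nht^2/(c_1+c_2 t)\bigr)$ for universal constants $c_1,c_2$. To upgrade this pointwise bound to the supremum, I would exploit that $(P_j-P)f^2$ depends on $f$ only through the $n$-vector $(f(X_i))_{i\in I_j}$, so the unit ball in $\mathcal{H}$ collapses to an $n$-dimensional ellipsoid under the evaluation map; a covering of this ellipsoid at scale $\asymp (nh)^{-1}$ in the empirical $L_2$ geometry has cardinality of order $n$, and a union bound over the resulting net produces the prefactor $2n$. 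The restriction $t\ge c_3(nh)^{-1}$ is precisely the scale at which the discretization error from the covering is dominated by the Bernstein tail, so that the net-to-supremum passage costs only a constant.

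\textbf{Main obstacle.} The delicate step is the uniform lower bound $K(x,x)\gtrsim h^{-1}$ on all of $[0,1]$, including at the endpoints, which requires a careful Green-function analysis for the self-adjoint operator $(-1)^m D^{2m}+\lambda^{-1}\pi$ under the quasi-uniformity condition (\ref{quasi:uniform:assumption}); this is the content of the equivalent-kernel theory in \cite{eggermont2009maximum} and cannot be extracted from the Mercer series alone. Once this uniform estimate is in hand, the Bernstein-plus-covering argument above is routine and yields exactly the claimed tail bound.
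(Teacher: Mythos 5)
Your reduction via polarization and your Bernstein bound for a single $f$ are both correct: the upper bound $K(x,x)\le c_\varphi^2 h^{-1}$ (already Lemma~\ref{le:prelim}(a), via Mercer and Assumption~\ref{A3}) gives envelope $\lesssim h^{-1}$ and variance $\lesssim h^{-1}$, and Bernstein then yields the claimed exponent for a fixed function. Incidentally, this means the lower bound $K(x,x)\gtrsim h^{-1}$, which you single out as the ``main obstacle,'' is not actually used anywhere in your argument --- the quasi-uniform assumption and the Green-function theory enter your write-up only as a citation, not as an ingredient.

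The genuine gap is the passage from a single $f$ to the supremum. You claim that a covering of the image ellipsoid at scale $\asymp (nh)^{-1}$ in empirical $L_2$ has cardinality of order $n$, so that a union bound over the net costs only a factor $2n$. This is false. Under the evaluation map the image of the unit ball $\{\|f\|\le1\}$ is an ellipsoid with semi-axes $\sigma_\nu=(1+\lambda/\mu_\nu)^{-1/2}$, and for Sobolev eigenvalues $\mu_\nu\asymp\nu^{-2m}$ we have $\sigma_\nu\asymp1$ for all $\nu\lesssim h^{-1}$. Hence the log-covering number at scale $\varepsilon=(nh)^{-1}$ is already at least $h^{-1}\log(nh)$ from these flat directions alone, i.e.\ $N(\varepsilon)\gtrsim (nh)^{h^{-1}}$, which is vastly larger than $n$ once $h\to0$ (e.g.\ $h\asymp N^{-1/(2m+1)}$). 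A union bound over such a net would wipe out the Bernstein tail entirely; one cannot recover the prefactor $2n$ by generic entropy counting.

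The paper's route is structurally different and avoids this. Using the equivalent-kernel theory of Eggermont--LaRiccia (Chapter 21, Lemmas 3.4--3.5 there, cited as Corollary 5.41 in the statement), the supremum $\xi_j$ over the $\|\cdot\|$-unit ball is first shown to be dominated by uniform deviations of kernel-density-estimator type, namely $\|n^{-1}\sum_i K_h(X_i,\cdot)-\mathbb{E}[K_h(X,\cdot)]\|_\infty$ and its derivative analogue, and then $K_h$ is dominated pointwise by the exponential Green function $g_h$ (Lemma~\ref{le:add:equivalent_kernel}). The uniform deviation of $g_h\circledast(dP_n-dP_0)$ over $[0,1]$ is then controlled by its values at the $n$ design points (Lemmas~\ref{le:emp:bound_g:1}--\ref{le:emp:bound_g:2}), because between design points the exponential convolution varies monotonically in a controlled way. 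Bernstein at each design point plus a union bound over those $n$ points produces exactly the prefactor $2n$. So the factor $n$ has nothing to do with covering the infinite-dimensional unit ball; it is a union over the $n$ data locations, made possible by the one-dimensional Green-function structure. This structural reduction is the step your proposal is missing and cannot be replaced by a standard discretization.
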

The proof of Proposition \ref{prop:EL09} is based on the novel technical tool that we introduce into D$\&$C framework:  the Green function for equivalent kernels; see \cite[Corollary 5.41]{eggermont2009maximum}. An immediate consequence of Proposition \ref{prop:EL09} is that
 Assumption \ref{A4} holds with $a=b=1$. Then based on Theorem \ref{thm:of:thm1} and Theorem \ref{thm:power:random:design}, we have the following results. 

\begin{Corollary}\label{smoothing:spline:cor}
Suppose that $\mathcal{H}=S^m(\mathbb{I})$, (\ref{quasi:uniform:assumption}), 
Assumptions \ref{A1} and \ref{A2} hold. 
\begin{enumerate}%[$(a)$]
\item If $m>1/2$, $s=o(N^{2m/(2m+1)}/\log{N})$ and $\lambda\asymp N^{-2m/(2m+1)}$,
then $\|\bar{f}-f_0\|=O_P(N^{-m/(2m+1)})$.
\item If $m>3/4$, $s=o(N^{(4m-3)/(4m+1)}/\log{N})$ and $\lambda\asymp N^{-4m/(4m+1)}$,
then the Wald-type test achieves minimax rate of testing $N^{-2m/(4m+1)}$.
\end{enumerate}
\end{Corollary}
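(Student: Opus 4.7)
The plan is to verify the hypotheses of Theorems~\ref{thm:of:thm1} and \ref{thm:power:random:design} in the smoothing-spline setting, translating the abstract concentration requirement (Assumption~\ref{A4}) into concrete scalings of $s$ and $\lambda$ via the relation $h\asymp\lambda^{1/(2m)}$. Once Assumption~\ref{A4} holds with $a=b=1$, both parts reduce to arithmetic on the exponents. For $m>1/2$, $(S^m(\mathbb{I}),\langle\cdot,\cdot\rangle)$ is a RKHS with eigenvalues $\mu_\nu\asymp\nu^{-2m}$, which yields the tail-sum condition in Assumption~\ref{A3}, and Proposition~\ref{prop:var_eff} gives $h^{-1}\asymp\lambda^{-1/(2m)}$. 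The key input is Proposition~\ref{prop:EL09}: integrating its tail bound and taking a union bound over $1\le j\le s$ yields $\max_{1\le j\le s}\xi_j=O_P(\sqrt{\log N/(nh)})$, i.e., Assumption~\ref{A4} with $a=b=1$.

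For part~1, I would invoke Theorem~\ref{thm:of:thm1}. Its driving condition $\log^b N=o(nh^a)$ reads $\log N=o(nh)$. With $\lambda\asymp N^{-2m/(2m+1)}$ one obtains $h\asymp N^{-1/(2m+1)}$ and $nh\asymp N^{2m/(2m+1)}/s$, so the assumed range $s=o(N^{2m/(2m+1)}/\log N)$ is precisely what is required. Bound (\ref{cor:thm1:eqn:1}) then gives $E\{\|\bar f-f_0\|^2\mid \mathbf{X}\}\lesssim (Nh)^{-1}+\lambda\|f_0\|_{\mathcal{H}}^2\asymp N^{-2m/(2m+1)}$, and taking square roots delivers the claimed rate $O_P(N^{-m/(2m+1)})$.

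For part~2, I would invoke Theorem~\ref{thm:power:random:design}. Its conditions $h=o(1)$, $Nh^2\to\infty$, and $\log N=o(nh^2)$ must be verified. With $\lambda\asymp N^{-4m/(4m+1)}$ one gets $h\asymp N^{-2/(4m+1)}$, so $Nh^2\asymp N^{(4m-3)/(4m+1)}$ diverges exactly when $m>3/4$, while $nh^2\asymp N^{(4m-3)/(4m+1)}/s$ dominates $\log N$ precisely under the stated scaling of $s$. A short calculation shows that under this scaling the residual terms $N^{-1/2}$, $b_{N,\lambda}$, and $b_{N,\lambda}^{1/2}(Nh)^{-1/4}$ in the definition of $d_{N,\lambda}$ are all dominated by $(Nh^{1/2})^{-1/2}$, so $d_{N,\lambda}$ collapses to the two-term trade-off in (\ref{eq:trade_off:testing}); a direct exponent computation yields $\lambda^{1/2}\asymp(Nh^{1/2})^{-1/2}\asymp N^{-2m/(4m+1)}$, the claimed Ingster rate.

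The main obstacle is not in either part of the corollary itself --- each is essentially bookkeeping on top of the general theorems --- but in securing the sharp exponents $a=b=1$ in Proposition~\ref{prop:EL09}. A weaker empirical-process bound would only recover the polynomially worse range for $s$ obtained in \cite{zhang2013divide}; the Green-function representation of equivalent kernels from \cite{eggermont2009maximum} is what delivers the sharp concentration, and therefore the near-optimality (up to a $\log N$ factor) in the admissible number of machines.
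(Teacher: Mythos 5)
Your proposal is correct and follows exactly the route the paper intends: verify Assumption~\ref{A3} and identify $h\asymp\lambda^{1/(2m)}$ for $\mu_\nu\asymp\nu^{-2m}$, read off Assumption~\ref{A4} with $a=b=1$ from Proposition~\ref{prop:EL09}, and then specialize Theorem~\ref{thm:of:thm1} (for estimation) and Theorem~\ref{thm:power:random:design} (for testing) to the stated $\lambda$ and $s$. Your bookkeeping of the exponents checks out: in part~1, $nh\asymp N^{2m/(2m+1)}/s$ makes $\log N=o(nh)$ equivalent to the stated range for $s$, and $(Nh)^{-1}\asymp\lambda\asymp N^{-2m/(2m+1)}$ gives the estimation rate; in part~2, $Nh^2\asymp N^{(4m-3)/(4m+1)}$ requires $m>3/4$, $\log N=o(nh^2)$ requires the stated smaller range for $s$, and (using $\|f\|_\mathcal{H}\lesssim1$) $b_{N,\lambda}=o(N^{-1/2})$ so all residual terms in $d_{N,\lambda}$ are indeed dominated by $\lambda^{1/2}\asymp(Nh^{1/2})^{-1/2}\asymp N^{-2m/(4m+1)}$. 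You also correctly identify that the only nontrivial ingredient is the Green-function-based concentration in Proposition~\ref{prop:EL09}, without which $a=b=1$ and hence the sharp range for $s$ would not be available.
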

It is known that the estimation rate $N^{-m/(2m+1)}$ is minimax-optimal; see \cite{stone1985additive}. Furthermore, the testing rate $N^{-2m/(4m+1)}$ is also minimax optimal, in the sense of \cite{ingster1993asymptotically}. It is worth noting that the upper bound for $s=o(N^{2m/(2m+1)}/\log{N})$ matches (upto a logarithmic factor) 
the critical one 
by \cite{shang2017computational} in evenly spaced design,
which is substantially larger than the one obtained by \cite{zhang2013divide}, i.e., $s=o(N^{(2m-1)/(2m+1)}/\log{N})$ for bounded eigenfunctions; see Table \ref{table:est_compare} for the comparison. 
\begin{table}[h]
\centering
\renewcommand{\arraystretch}{1.3} 
\label{table:est_compare}
\begin{tabular}{c|c|c|c}
\hline
                                         & Zhang et al \cite{zhang2013divide}                                   & Shang et al \cite{shang2017computational}    & Our approach                \\ \hline
          {\begin{tabular}[c]{@{}c@{}}  smoothing spline\\regression \end{tabular}}                              & \begin{tabular}[c]{@{}c@{}} $s\lesssim N^{\frac{2m-1}{2m+1}}/\log N$\\sharpness of $s$ \xmark \end{tabular}
         & \begin{tabular}[c]{@{}c@{}} $s\lesssim N^{\frac{2m}{2m+1}}$\\sharpness of $s$ \cmark\end{tabular}   &\begin{tabular}[c]{@{}c@{}}$s=o(N^{\frac{2m}{2m+1}}/\log N)$\\sharpness of $s$ \cmark  \end{tabular}\\ \hline
\end{tabular}
\caption{Comparison of upper bounds of $s$ to achieve minimax optimal estimation.}
\end{table}

\subsubsection{Example 2: Nonparametric additive regression}\label{eg:example4}
Consider the function space
\[
\mathcal{H}= \{f(x_1,\ldots,x_d)=\sum_{k=1}^d f_k(x_k): \, f_k \in S^m(\mathbb{I}), \; \|f_k\|_\mathcal{H} \leq C\; \text{for}\;  k=1,\ldots,d\},
\]
where $C>0$ is a constant. That is, any $f\in\mathcal{H}$ has an additive decomposition of $f_k$'s. Here, $d$ is either fixed or slowly diverging. Such additive model has been well studied in many literatures; see \cite{stone1985additive}, \cite{meier2009high}, \cite{raskutti2012minimax}, \cite{yuan2016minimax} among others. 
 For $x= (x_1, \cdots, x_d) \in \mathcal{X}$, suppose $x_i, x_j$ are independent for $i\neq j \in \{1, \cdots, d\}$ and each $x_i$ satisfies (\ref{quasi:uniform:assumption}). For identifiability, assume 
 $E\{f_k(x_k)\}= 0$ for all $1\le k\le d$. 
For $f=\sum_{k=1}^d f_k$ and $g=\sum_{k=1}^d g_k$, define
\begin{align*}
 \langle f,g\rangle_{\mathcal{H}} & = \sum_{k=1}^d \langle f_k, g_k \rangle_{\mathcal{H}} = \sum_{k=1}^d \int_{\mathbb{I}}f_k^{(m)}(x)g_k^{(m)}(x)dx,\quad  \text{and} \\
  V(f,g) & = \sum_{k=1}^d V_k(f_k,g_k)\equiv\sum_{k=1}^d
E\{f_k(X_k)g_k(X_k)\}.
\end{align*}

It is easy to verify that $\mathcal{H}$ is an RKHS under $\langle\cdot,\cdot\rangle$ defined in (\ref{eq:new_norm}). Lemma \ref{le:add:prelim} below summarizes the properties for the $\mathcal{H}$ with $d$ additive components.

\begin{Lemma}\label{le:add:prelim}
\begin{enumerate}
\item There exist eigenfunctions $\varphi_\nu$ and eigenvalues $\mu_\nu$ that satisfying Assumption \ref{A3}. \label{le:add:prelim:a}
\item It holds that $\sum_{\nu\ge1}(1+\lambda/\mu_\nu)^{-1} := h^{-1} \asymp d\lambda^{-1/(2m)}$, and  $\sum_{\nu\ge1}(1+\lambda/\mu_\nu)^{-2} \asymp h^{-1}$ accordingly. \label{le:add:prelim:b}
\item For $f\in \mathcal{H}$, $\|\mathcal{P}_\lambda f\|^2 \leq cd\lambda $, where $c$ is a bounded constant. \label{le:add:prelim:c}
\item 
Assumption \ref{A4} holds with $a=b=1$. \label{le:add:prelim:d}
\end{enumerate}

\end{Lemma}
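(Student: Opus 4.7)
The plan is to build everything out of the univariate results from Example 1 by exploiting the additive structure of $\mathcal{H}$ together with independence of the coordinates $X_1,\ldots,X_d$ and the identifiability constraint $E\{f_k(X_k)\}=0$. For part (\ref{le:add:prelim:a}), let $\{\psi_\ell,\rho_\ell\}_{\ell\ge 1}$ be the Mercer system of the centered single-component space $S^m(\mathbb{I})$, with $\psi_\ell$ uniformly bounded and $\rho_\ell\asymp\ell^{-2m}$. Define $\varphi_{k,\ell}(x):=\psi_\ell(x_k)$; independence of coordinates gives $V(\varphi_{k,\ell},\varphi_{k',\ell'})=\delta_{kk'}\delta_{\ell\ell'}$, and the additive form of $\langle\cdot,\cdot\rangle_{\mathcal{H}}$ yields $\langle\varphi_{k,\ell},\varphi_{k',\ell'}\rangle_{\mathcal{H}}=\delta_{kk'}\delta_{\ell\ell'}/\rho_\ell$, so the $\mathcal{H}$-eigenvalues are $d$ copies of $\rho_\ell$, uniformly bounded eigenfunctions are inherited from the univariate $\psi_\ell$, and the sorted combined sequence $\tilde\mu_n\asymp d^{2m}n^{-2m}$ verifies $\sup_k\sum_{n>k}\tilde\mu_n/(k\tilde\mu_k)<\infty$. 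Part (\ref{le:add:prelim:b}) then follows from a standard integral estimate $\sum_\nu(1+\lambda/\mu_\nu)^{-1}=d\sum_\ell(1+\lambda\ell^{2m})^{-1}\asymp d\lambda^{-1/(2m)}$, and the $(1+\lambda/\mu_\nu)^{-2}$ claim is immediate from Proposition \ref{prop:var_eff}. For part (\ref{le:add:prelim:c}) I would expand $f=\sum_\nu V(f,\varphi_\nu)\varphi_\nu$ and read off $\mathcal{P}_\lambda\varphi_\nu=\frac{\lambda/\mu_\nu}{1+\lambda/\mu_\nu}\varphi_\nu$ from (\ref{eq:expansion}); a direct computation gives
\[
\|\mathcal{P}_\lambda f\|^2=\sum_\nu\frac{(\lambda/\mu_\nu)^2}{1+\lambda/\mu_\nu}V(f,\varphi_\nu)^2\le\lambda\|f\|_{\mathcal{H}}^2=\lambda\sum_{k=1}^d\|f_k\|_{\mathcal{H}}^2\le C^2 d\lambda.
\]

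Part (\ref{le:add:prelim:d}) is the technical crux. For $f=\sum_k f_k$ and $g=\sum_{k'}g_{k'}$ with $\|f\|=\|g\|=1$ (equivalently $\sum_k\|f_k\|_*^2\le 1$ and $\sum_{k'}\|g_{k'}\|_*^2\le 1$ in the single-component full norm $\|\cdot\|_*$, by independence and centering), I would write
\[
P_jfg-Pfg=\sum_{k=1}^d(P_jf_kg_k-Pf_kg_k)+\sum_{k\ne k'}(P_jf_kg_{k'}-Pf_kg_{k'}).
\]
For the diagonal block, Proposition \ref{prop:EL09} applied to each of the $d$ univariate empirical processes $\xi_j^{(k)}$ (with single-component effective dimension $h_1^{-1}\asymp\lambda^{-1/(2m)}$, so that $h^{-1}=d\,h_1^{-1}$) combined with a union bound over the $d\le N$ components gives $\max_k\xi_j^{(k)}=O_P(\sqrt{\log N/(nh_1)})$, and Cauchy--Schwarz against $\sum_k\|f_k\|_*\|g_k\|_*\le 1$ controls the whole diagonal part at that rate, which is at most $\sqrt{\log N/(nh)}$. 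For the off-diagonal block, centering plus coordinate independence give $Pf_kg_{k'}=E\{f_k(X_k)\}E\{g_{k'}(X_{k'})\}=0$, so each cross term is a mean-zero empirical sum of the bounded product function.

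The \emph{main obstacle} will be to bound the off-diagonal block without letting the $d(d-1)$ pairs blow up the rate: a crude per-pair bound combined with a double sum would give an unwanted extra factor of $d$. My plan is to handle all cross pairs \emph{jointly} by taking the supremum over the product class $\{f_k\otimes g_{k'}:\|f_k\|_*,\|g_{k'}\|_*\le 1\}$ through a single chaining argument, using independence of $X_k$ and $X_{k'}$ (and the centering of each univariate factor) to decouple into a bilinear Rademacher-type process whose metric entropy is still controlled by the univariate eigenvalue tail $\rho_\ell\asymp\ell^{-2m}$. Summing via Cauchy--Schwarz with $\sum_k\|f_k\|_*^2\le 1$ absorbs the $d$-dependence into the effective dimension $h^{-1}=d\,h_1^{-1}$, yielding the off-diagonal contribution at the same $O_P(\sqrt{\log N/(nh_1)})$ rate. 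Combining both blocks gives $\xi_j=O_P(\sqrt{\log N/(nh)})$, i.e.\ Assumption \ref{A4} with $a=b=1$.
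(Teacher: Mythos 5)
Parts (\ref{le:add:prelim:a})--(\ref{le:add:prelim:c}) of your proposal are correct and essentially track the paper's argument: the tensor construction $\varphi_\nu(x)=\psi_\ell(x_k)$ with orthogonality inherited from coordinate independence and centering, the summation $\sum_\nu(1+\lambda/\mu_\nu)^{-1}=d\sum_\ell(1+\lambda\ell^{2m})^{-1}\asymp d\lambda^{-1/(2m)}$, and the operator bound $\|\mathcal{P}_\lambda f\|^2\le\lambda\|f\|_\mathcal{H}^2\le C^2 d\lambda$. Your spectral derivation of (\ref{le:add:prelim:c}) is a slightly longer route to the same inequality the paper reads off from Lemma~\ref{le:prelim}(b), but it is fine.

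Part (\ref{le:add:prelim:d}) is where you diverge sharply from the paper, and where the proposal has a genuine gap. The paper does \emph{not} split $\xi_j$ into diagonal and cross-component blocks; it extends the Green-function-for-equivalent-kernels machinery (Eggermont--LaRiccia) directly to the additive kernel $K_h(x,t)=\sum_{j=1}^d K_{h_0,j}(t_j,x_j)$ with $h_0=dh$, proves a convolution-domination bound (Lemma~\ref{le:add:equivalent_kernel}) together with Bernstein inequalities for $g_h\circledast(dP_n-dP_0)$ at fixed and random points (Lemmas~\ref{le:emp:bound_g:1}, \ref{le:emp:bound_g:2}), and assembles these into Lemma~\ref{le:add:EL09}, which gives $P(\xi_j\ge t)\le 2n\exp(-nht^2/(c_1+c_2t))$ in one shot with $h^{-1}\asymp d\lambda^{-1/(2m)}$. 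Your decomposition is a different strategy, and your treatment of the \emph{diagonal} block (per-coordinate Proposition~\ref{prop:EL09}, union bound over $d\le N$ coordinates, then Cauchy--Schwarz against $\sum_k\|f_k\|_*\|g_k\|_*\le1$) is sound and gives the sub-target rate $O_P(\sqrt{\log N/(nh_1)})$.

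The off-diagonal block, however, is where the argument stops being a proof. You correctly identify the danger of a $d(d-1)$-fold union bound, but the escape route you describe --- ``handle all cross pairs jointly\ldots summing via Cauchy--Schwarz with $\sum_k\|f_k\|_*^2\le1$ absorbs the $d$-dependence'' --- is not substantiated and is not obviously true. Note that the per-pair Cauchy--Schwarz you gesture at gives
$\sum_{k\ne k'}|P_j f_kg_{k'}|\le\zeta\sum_{k\ne k'}\|f_k\|_*\|g_{k'}\|_*\le\zeta\bigl(\sum_k\|f_k\|_*\bigr)\bigl(\sum_{k'}\|g_{k'}\|_*\bigr)\le\zeta\,d$,
an extra factor of $d$, so the bound must come from keeping the cross-sum together as a single empirical process in $Z_i=f(X_i)g(X_i)-\sum_k f_k(X_{ik})g_k(X_{ik})$ and controlling the metric entropy of the full $d$-component additive class. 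That entropy scales with the effective dimension $h^{-1}=dh_1^{-1}$, not $h_1^{-1}$, so your claim that the off-diagonal contribution comes in at $O_P(\sqrt{\log N/(nh_1)})$ appears too strong; the natural output of such a chaining argument is $O_P(\sqrt{\log N/(nh)})$, which still suffices for the lemma but is not the rate you state. More importantly, the chaining/decoupling argument is only described as a ``plan'' and never executed --- you would need the sub-Gaussian increment bound, the Orlicz entropy integral for the additive product class, and a bound that tracks the $d$-dependence cleanly. Until that is done, part (\ref{le:add:prelim:d}) is not proved. If you want to stay on your route, I recommend either writing out the chaining for the $Z_i$-process in full, or falling back to the paper's equivalent-kernel argument, which sidesteps the cross-term bookkeeping entirely by exploiting the convolution structure of the additive kernel.
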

Lemma \ref{le:add:prelim} (4) establishes a concentration inequality of $\xi_j$ for the additive model, such that $\max_{1\leq j \leq s} = O_P(\sqrt{\frac{\log N}{nh}})$. The proof is based on the extension of the Green function techniques (\cite{eggermont2009maximum}) to diverging dimensional setting; see Lemma \ref{le:add:EL09} in Appendix.

Combining Lemma \ref{le:add:prelim}, Theorems \ref{thm:of:thm1}, \ref{basic:thm:testing:null:distr} and \ref{thm:power:random:design}, we have the following result. 

\begin{Corollary}\label{cor:additive}
\begin{enumerate}%[(a)]
\item Suppose Assumptions \ref{A1}, \ref{A2} hold. If $m>1/2$, $d=o(N^{\frac{2m}{2m+1}}/\log N)$,  $s=o(d^{-1}N^{\frac{2m}{2m+1}}/\log N)$,  $\lambda \asymp N^{-\frac{2m}{2m+1}}$, then $\|\bar{f}-f_0\| = O_P(d^{1/2}N^{-\frac{m}{2m+1}})$. \label{cor:additive:est}

\item Suppose Assumptions \ref{A1}, \ref{A2} hold. If $m>3/4$, $d=o(N^{\frac{4m-3}{4(2m+1)}}(\log N)^{-\frac{4m+1}{4(2m+1)}})$,  $s=o(d^{-\frac{4(2m+1)}{4m+1}}N^{\frac{4m-3}{4m+1}}/\log N)$, and $\lambda \asymp d^{-\frac{2m}{4m+1}}N^{-\frac{4m}{4m+1}}$, then the Wald-type test achieves minimax rate of testing with $d^{\frac{2m+1}{2(4m+1)}}N^{-\frac{2m}{4m+1}}$. \label{cor:additive:test}
\end{enumerate}
\end{Corollary}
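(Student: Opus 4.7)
The plan is to invoke the three general results already proved---Theorem \ref{thm:of:thm1} for estimation, and Theorems \ref{basic:thm:testing:null:distr} and \ref{thm:power:random:design} for testing---and specialize them to the additive RKHS using Lemma \ref{le:add:prelim} as a ``dictionary'' between the abstract quantities $h$, $a$, $b$, $\|f\|_{\mathcal{H}}$ and explicit expressions in $d$, $N$, $\lambda$. The key inputs from Lemma \ref{le:add:prelim} are: $h \asymp d^{-1}\lambda^{1/(2m)}$, Assumption \ref{A4} holds with $a = b = 1$, and any $f_0 = \sum_{k=1}^{d} f_{0k}\in\mathcal{H}$ satisfies $\|f_0\|_{\mathcal{H}}^2 = \sum_k \|f_{0k}\|_{\mathcal{H}}^2 = O(d)$.

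For the estimation claim, I would apply inequality (\ref{cor:thm1:eqn:1}). The variance term is $(Nh)^{-1} \asymp d\lambda^{-1/(2m)}/N$ and the squared bias is $\lambda\|f_0\|_{\mathcal{H}}^2 = O(d\lambda)$; balancing them yields $\lambda^{(2m+1)/(2m)} \asymp N^{-1}$, hence $\lambda \asymp N^{-2m/(2m+1)}$, and the rate $\|\bar{f}-f_0\| = O_P(d^{1/2}N^{-m/(2m+1)})$. The enabling hypothesis of Lemma \ref{basic:thm:estimation}, $\log^b N = o(nh^a)$, with $a=b=1$ and $nh\asymp N^{2m/(2m+1)}/(sd)$, translates directly into $s = o(d^{-1}N^{2m/(2m+1)}/\log N)$, exactly the corollary's assumption, and the restriction $d = o(N^{2m/(2m+1)}/\log N)$ guarantees this range for $s$ is nonempty.

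For the testing claim, I would apply Theorem \ref{thm:power:random:design} using $\|f\|_{\mathcal{H}}\le\sqrt{d}\,C$. The dominant terms of $d_{N,\lambda}$ are the bias contribution $\lambda^{1/2}\|f\|_{\mathcal{H}} \lesssim \lambda^{1/2}d^{1/2}$ and the standard deviation $(Nh^{1/2})^{-1/2} \asymp N^{-1/2}d^{1/4}\lambda^{-1/(8m)}$ inherited from Theorem \ref{basic:thm:testing:null:distr}. Equating these gives $\lambda^{(4m+1)/(8m)} \asymp N^{-1/2}d^{-1/4}$, so $\lambda \asymp d^{-2m/(4m+1)}N^{-4m/(4m+1)}$ and $d_{N,\lambda}\asymp d^{(2m+1)/(2(4m+1))}N^{-2m/(4m+1)}$. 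The remaining components $N^{-1/2}$, $b_{N,\lambda}^{1/2}(Nh)^{-1/4}$, and $b_{N,\lambda}$ are of strictly smaller order; each such verification reduces to an inequality implied by $\log N = o(nh^2)$. This last inequality is precisely the condition $\log^b N = o(nh^{a+1})$ of Theorem \ref{basic:thm:testing:null:distr}, and substituting the optimal $\lambda$ produces $nh^2 \asymp s^{-1}d^{-4(2m+1)/(4m+1)}N^{(4m-3)/(4m+1)}$, so the constraint becomes $s = o(d^{-4(2m+1)/(4m+1)}N^{(4m-3)/(4m+1)}/\log N)$. The requirement $m>3/4$ keeps the $N$-exponent positive, the stated upper bound on $d$ ensures the $s$-range is nonempty, and the side conditions $h = o(1)$ and $Nh^2\to\infty$ follow automatically from the same scalings.

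The main bookkeeping obstacle is the diverging-$d$ regime: since $d$ appears in $h$, $\|f\|_{\mathcal{H}}$, $\|\mathcal{P}_\lambda f\|$, and therefore in every term of $d_{N,\lambda}$, verifying that the sub-leading terms remain negligible and extracting the precise $d$-exponent $4(2m+1)/(4m+1)$ in the $s$-constraint require careful tracking of exponents throughout. Once that tracking is done, the corollary is an algebraic consequence of Theorems \ref{thm:of:thm1}, \ref{basic:thm:testing:null:distr}, and \ref{thm:power:random:design}.
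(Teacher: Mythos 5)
Your derivation of the achievable rates is correct and matches the paper's approach: you specialize Theorem \ref{thm:of:thm1} and Theorems \ref{basic:thm:testing:null:distr}--\ref{thm:power:random:design} using the dictionary $h\asymp d^{-1}\lambda^{1/(2m)}$, $a=b=1$, $\|f_0\|_{\mathcal{H}}^2=O(d)$ from Lemma \ref{le:add:prelim}, balance variance against bias (for estimation) and bias against standard deviation (for testing), and translate the conditions $\log N=o(nh)$ and $\log N=o(nh^2)$ into the stated upper bounds for $s$. The exponent bookkeeping is all right.

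However, there is a genuine gap in part (\ref{cor:additive:test}). The statement claims the Wald-type test achieves the \emph{minimax rate of testing} $d^{(2m+1)/(2(4m+1))}N^{-2m/(4m+1)}$. You establish only the upper (achievability) direction: that the test detects alternatives separated from the null by this order. You never argue that this rate cannot be improved, i.e., that no test can consistently distinguish null from alternative at a strictly smaller separation. In fact, the paper explicitly flags this as ``a new result in literature which is of independent interest,'' and its proof reduces model (\ref{krr:model}) to a Gaussian sequence model $\tilde{y}=\theta+w$ under an elliptical constraint $\sum_j \theta_j^2/\mu_j\le d$ with $\mu_j\asymp\lceil j/d\rceil^{-2m}$, then invokes the local geometry machinery (Corollary 1) of \cite{wei2017local}: one computes the critical indices $m_u,m_l$ at which $d\mu_k$ crosses $\delta^2$ and shows the induced lower and upper sandwich coincide at $\delta^*\asymp d^{(2m+1)/(2(4m+1))}N^{-2m/(4m+1)}$. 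Without this lower-bound argument, you have proved that $d_{N,\lambda}$ as defined in Theorem \ref{thm:power:random:design} evaluates to the claimed expression at the optimal $\lambda$, but not that this expression is the minimax separation rate. Adding that reduction to the normal means model and the two-sided bound would close the gap.
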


\begin{Remark}
It was shown by \cite{raskutti2012minimax} that $d^{1/2}N^{-\frac{m}{2m+1}}$ is the minimax estimation rate
in nonparametric additive model. 
Part (\ref{cor:additive:est}) of Corollary \ref{cor:additive}
provides an upper bound for $s$ such that $\bar{f}$ achieves this rate.
Meanwhile, Part (\ref{cor:additive:test}) of Corollary \ref{cor:additive} provides a different upper bound for $s$ such that our Wald-type test achieves
minimax rate of testing $d^{\frac{2m+1}{2(4m+1)}}N^{-\frac{2m}{4m+1}}$.
It should be emphasized that such minimax rate of testing is a new result in literature which is of independent interest.
The proof is based on a local geometry approach recently developed by \cite{wei2017local}.
When $d=1$, all results in this section reduce to Example 1 on univariate smoothing splines. 

\end{Remark}

\subsubsection{Example 3: Gaussian RKHS regression}\label{eg:example2}
Suppose that $\mathcal{H}$ is an RKHS generated by the Gaussian kernel $K(x,x')=\exp(-c\|x-x'\|^2),x,x'\in\bbR^d$,
where $c, d>0$ are constants. Here we consider $d=1,2$. Then Assumption \ref{A3} holds with
$\mu_\nu\asymp [(\sqrt{5}-1)/2]^{-(2\nu+1)}$, $\nu\ge1$;
see \cite{sollich2005understanding}.
It can be shown that $h\asymp (-\log{\lambda})^{-1/2}$ holds.
%which verifies Assumption \ref{A5}.
To verify Assumption \ref{A4}, we need the following lemma.
\begin{Lemma}\label{lemma:A3:verify}
For Gaussian RKHS,
Assumption \ref{A4} holds with $a=2$, $b=d+2$. 
\end{Lemma}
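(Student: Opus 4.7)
The plan is to apply Talagrand's concentration inequality to the empirical process $\xi_j$ over the function class $\{fg:\|f\|=\|g\|=1\}$, control its expectation by Dudley chaining against the $L^\infty$ metric entropy of the unit ball of $\mathcal{H}$, and close by a union bound over $j=1,\ldots,s$.

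First, I would establish envelope and variance. By the reproducing property and the Mercer expansion, for $\|f\|=1$,
\[
f(x)^2 \leq K(x,x) = \sum_\nu \frac{\varphi_\nu(x)^2}{1+\lambda/\mu_\nu} \leq c_\varphi^2 h^{-1},
\]
so $\|f\|_\infty \leq c_\varphi h^{-1/2}$, $\|fg\|_\infty \leq c_\varphi^2 h^{-1}$, and $\mathrm{Var}(fg(X)) \leq c_\varphi^2 h^{-1}$. Talagrand's inequality then yields $P(\xi_j - E\xi_j > t) \leq \exp(-cnt^2/(h^{-1}(1+t)))$. To handle the expectation, I would invoke the classical bound $\log \mathcal{N}(\epsilon, \mathcal{B}_{\mathcal{H}}, L^\infty) \lesssim (\log(1/\epsilon))^{d+1}$ on the $L^\infty$ metric entropy of the Gaussian RKHS unit ball $\mathcal{B}_{\mathcal{H}}$ on a compact subset of $\mathbb{R}^d$ for $d\in\{1,2\}$ (as in Kuhn or Steinwart), transfer it to the product class with $L^2(P)$-diameter $h^{-1/2}$, and chain to obtain $E\xi_j \lesssim \sqrt{h^{-2}(\log N)^{d+1}/n}$. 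Combining these with a union bound over $j=1,\ldots,s \leq N$ (which adds one more factor of $\log N$) gives
\[
\max_{1\leq j\leq s}\xi_j = O_P\!\left(\sqrt{\frac{(\log N)^{d+2}}{nh^2}}\right),
\]
which is Assumption \ref{A4} with $a = 2$, $b = d+2$.

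The main obstacle is obtaining the sharp $(\log(1/\epsilon))^{d+1}$ metric-entropy estimate for the Gaussian RKHS unit ball in $L^\infty$ and tracking it through the chaining step after rescaling to the product class; this is where the $d$-dependence in the final exponent arises. A secondary subtlety is that the envelope $h^{-1}$ itself depends on $h$, so one must use a version of Talagrand's inequality accommodating $h$-dependent envelopes, or equivalently rescale by $h^{1/2}$ before applying the standard form. An alternative route that avoids the direct entropy calculation is to truncate the Mercer expansion at a level $K \asymp h^{-2}$ (justified by the exponential eigenvalue decay $\mu_\nu \lesssim \rho^\nu$), reduce $\xi_j$ to the operator norm of a $K \times K$ symmetric random matrix whose entries are sample averages of $n$ bounded mean-zero random variables of variance $O(1/n)$, and apply matrix Bernstein after an $\epsilon$-net argument on the sphere in $\mathbb{R}^K$; this alternative delivers the same rate.
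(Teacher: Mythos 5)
Your proposal is correct and reaches the paper's rate $a=2$, $b=d+2$, but via genuinely different machinery. The paper rescales the unit ball of $(\mathcal{H},\|\cdot\|)$ into the set $\mathcal{G}(p)=\{f:\|f\|_{\sup}\le1,\|f\|_{\mathcal{H}}\le p\}$ with $p=c_K^{-1}(h/\lambda)^{1/2}$, recasts $\xi_j$ in terms of an $\mathcal{H}$-valued process $Z_j(g)$, and invokes a sub-Gaussian tail bound for $\sup_g\|Z_j(g)\|$ from \cite{yang2017non} whose scale is the entropy integral $J(p,1)$; the covering-number input is \cite[Proposition 1]{zhou2002covering}, which gives $J(p,1)=O((\log N)^{(d+1)/2})$. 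You instead treat the real-valued supremum directly over the product class $\{fg:\|f\|=\|g\|=1\}$, controlling its expectation by Dudley chaining against the $L^\infty$ entropy of the Gaussian RKHS ball and its fluctuations by Talagrand; the $L^\infty$ entropy estimate $(\log(1/\epsilon))^{d+1}$ is the same key ingredient, just cited from Kuhn/Steinwart rather than Zhou. The two routes are thus conceptually parallel --- both reduce to one entropy bound and one concentration bound, followed by a union bound over the $s$ machines --- but you use the ``textbook'' Talagrand--Dudley toolkit while the paper uses a purpose-built $\psi_2$-process lemma tailored to Hilbert-space-valued sums. One minor note: in the Talagrand split you could in principle do slightly better, since at deviation level $\log N$ the fluctuation term $\sqrt{h^{-1}\log N/n}$ is dominated by the Dudley mean $\sqrt{h^{-2}(\log N)^{d+1}/n}$ (because $h^{-2}\gg h^{-1}$), which would give $b=d+1$; the paper's $\psi_2$-tail formulation tangles the mean into the tail scale and so cannot exploit this, which is why it (and your conservative accounting) land on $b=d+2$. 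Your sketched alternative via Mercer truncation and matrix Bernstein is also plausible given the super-exponential eigenvalue decay, but the paper does not pursue it.
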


Following Theorem \ref{thm:of:thm1}, Theorems \ref{basic:thm:testing:null:distr} and \ref{thm:power:random:design},
we get the following consequence.
\begin{Corollary}\label{Gaussian:RKHS:cor}
Suppose that $\mathcal{H}$ is a Gaussian RKHS
and Assumptions \ref{A1} and \ref{A2} hold.
\begin{enumerate}%[$(a)$]
\item 
If $s=o(N/\log^{d+3}(N))$ and $\lambda\asymp N^{-1}\sqrt{\log{N}}$,
then $\|\bar{f}-f_0\|=O_P(N^{-1/2}\log^{1/4}{N})$.
\item 
If $s=o(N/\log^{d+3.5}{N})$ and $\lambda\asymp N^{-1}\log^{1/4}{N}$,
then the Wald-type test achieves minimax rate of testing $N^{-1/2}\log^{1/8}{N}$.
\end{enumerate}
\end{Corollary}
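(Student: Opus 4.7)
The plan is to reduce both parts to direct applications of the general estimation result (Theorem \ref{thm:of:thm1}) and the general testing results (Theorems \ref{basic:thm:testing:null:distr} and \ref{thm:power:random:design}), after inserting the Gaussian-kernel specifics. Two ingredients do all the work: Lemma \ref{lemma:A3:verify} supplies Assumption \ref{A4} with $(a,b)=(2,d+2)$, and the exponential eigenvalue decay $\mu_\nu \asymp [(\sqrt{5}-1)/2]^{-(2\nu+1)}$ yields the identity $h \asymp (-\log\lambda)^{-1/2}$ (which one can check by estimating $\sum_\nu (1+\lambda/\mu_\nu)^{-1}$ via a geometric-tail calculation). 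With these two facts in hand, the rest is pure bias--variance bookkeeping.

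For Part 1, I would first translate the admissibility condition $\log^b N = o(nh^a)$ of Lemma \ref{basic:thm:estimation}, via $n=N/s$ and $(a,b)=(2,d+2)$, into the bound $s = o(Nh^2/\log^{d+2} N)$. Plugging $\lambda \asymp N^{-1}\sqrt{\log N}$ into $h \asymp (-\log\lambda)^{-1/2}$ gives $h \asymp (\log N)^{-1/2}$, so $h^2 \asymp 1/\log N$, and this reduces exactly to the hypothesized $s = o(N/\log^{d+3} N)$. Under these choices the two terms on the right of (\ref{cor:thm1:eqn:1}) balance: both $1/(Nh)$ and $\lambda$ are of order $\sqrt{\log N}/N$, so Theorem \ref{thm:of:thm1} delivers $\|\bar f - f_0\|^2 = O_P(\sqrt{\log N}/N)$, which is the claimed rate.

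For Part 2, I would start from the slightly stronger condition $\log^b N = o(nh^{a+1})$ of Theorem \ref{basic:thm:testing:null:distr}, which with $(a,b)=(2,d+2)$ becomes $s = o(Nh^3/\log^{d+2} N)$. With $\lambda \asymp N^{-1}\log^{1/4} N$ one still has $h \asymp (\log N)^{-1/2}$, so $h^3 \asymp (\log N)^{-3/2}$ and the range of $s$ collapses to the stated $s = o(N/\log^{d+3.5} N)$. Within this range the simplified form (\ref{eq:trade_off:testing}) of $d_{N,\lambda}$ applies, and the two terms $\lambda^{1/2}\|f\|_{\cH}$ and $(Nh^{1/2})^{-1/2}$ are both of order $N^{-1/2}(\log N)^{1/8}$, giving $d_{N,\lambda} \asymp N^{-1/2}\log^{1/8} N$. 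The side hypotheses $h = o(1)$ and $Nh^2 \to \infty$ are automatic because $h^{-2}\asymp \log N$ grows much more slowly than $N$. Theorem \ref{thm:power:random:design} then yields the advertised minimax separation.

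I do not expect any substantive obstacle beyond careful tracking of logarithmic factors; the genuinely non-routine work is upstream, in Lemma \ref{lemma:A3:verify}, whose proof needs covering-number / empirical-process arguments calibrated to the very rapidly decaying Gaussian spectrum, and in verifying the identity $h \asymp (-\log\lambda)^{-1/2}$. The one conceptual point worth flagging is that, because $h$ is only logarithmically small, the admissible number of machines here is $N/\mathrm{polylog}(N)$ rather than a polynomial in $N$ as in the Sobolev examples; this reflects the essentially parametric effective dimension of the Gaussian RKHS, and the same phenomenon is what forces the $\log^{1/4} N$ and $\log^{1/8} N$ corrections to appear in the estimation and testing rates respectively.
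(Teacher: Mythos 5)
Your proof is correct and follows the exact route the paper intends (the paper states this corollary "follows from" Theorems \ref{thm:of:thm1}, \ref{basic:thm:testing:null:distr} and \ref{thm:power:random:design} together with Lemma \ref{lemma:A3:verify} without writing out the logarithmic bookkeeping). All the arithmetic checks out: $(a,b)=(2,d+2)$, $h\asymp(-\log\lambda)^{-1/2}\asymp(\log N)^{-1/2}$ for both choices of $\lambda$, the range-of-$s$ conditions $s=o(Nh^2/\log^{d+2}N)$ and $s=o(Nh^3/\log^{d+2}N)$ collapse to the advertised $N/\log^{d+3}N$ and $N/\log^{d+3.5}N$, and the bias--variance (resp.\ bias--standard-deviation) balance gives $N^{-1/2}\log^{1/4}N$ and $N^{-1/2}\log^{1/8}N$.
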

Corollary \ref{Gaussian:RKHS:cor} shows that 
one can choose $s$ to be of order $N$ (upto a logarithmic factor)
to obtain both optimal estimation and testing. 
This is consistent with the upper bound obtained by \cite{zhang2013divide} for optimal estimation, 
which is of a different logarithmic factor. Interestingly, Corollary \ref{Gaussian:RKHS:cor} shows that 
one can also choose $s$ to be almost identical to $N$ to obtain optimal testing.

\subsubsection{Example 4: Thin-Plate spline regression} \label{eg:example3}
Consider the $m$th order Sobolev space on $\mathbb{I}^d$, i.e.,
$\mathcal{H}= S^m(\mathbb{I}^d)$, with $d=2$ being fixed. 
It is known that 
Assumption \ref{A3} holds with $\mu_\nu \asymp \nu^{-2m/d}$; see \cite{gu2013smoothing}. Hence $h \asymp \lambda^{d/(2m)}$. The following lemma verifies Assumption \ref{A4}.
\begin{Lemma}\label{le:thin-plate}
 For thin-plate splines, Assumption \ref{A4} holds with $a=3-d/(2m)$, $b=1$.
\end{Lemma}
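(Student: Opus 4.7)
The plan is to verify Assumption \ref{A4} by combining Talagrand's concentration inequality with a Dudley chaining bound for the expected supremum of the empirical process indexed by the unit ball of $(\cH,\|\cdot\|)$. Via the polarization identity $fg = \tfrac{1}{4}[(f+g)^2 - (f-g)^2]$, bounding $\xi_j$ reduces up to a constant to bounding $\Xi_j := \sup_{\|f\|\le 1}|(P_j - P)f^2|$. A key preliminary step is a pointwise envelope: for any $f$ with $\|f\|\le 1$ we have $\|f\|_{L^2}\lesssim 1$ and $\|f\|_\cH \le \lambda^{-1/2}$, and since $m>d/2=1$, a Gagliardo--Nirenberg interpolation on $S^m(\mathbb{I}^2)$ yields
\[
\|f\|_\infty \;\le\; C\,\|f\|_{L^2}^{1 - d/(2m)}\|f\|_\cH^{d/(2m)} \;\lesssim\; \lambda^{-d/(4m)} \;=\; h^{-1/2},
\]
using $\lambda \asymp h^{2m/d}$.

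With this envelope I would bound $E\{\Xi_j\}$ by chaining. The classical $L^2$-entropy bound for Sobolev balls gives $\log N(\epsilon, B_\cH(\lambda^{-1/2}), L^2) \lesssim \lambda^{-d/(2m)}\epsilon^{-d/m} = h^{-1}\epsilon^{-d/m}$. Because $f \mapsto f^2$ is Lipschitz with constant $\lesssim h^{-1/2}$ on the unit ball, the induced entropy on $\cG = \{f^2:\|f\|\le 1\}$ in $L^2(P)$ satisfies $\log N(\epsilon, \cG, L^2) \lesssim h^{-1 - d/(2m)}\epsilon^{-d/m}$. Since $\sup_{g\in\cG}\|g\|_{L^2}\lesssim h^{-1/2}$, the Dudley integral
\[
\int_0^{C h^{-1/2}}\sqrt{h^{-1-d/(2m)}\epsilon^{-d/m}}\,d\epsilon \;\lesssim\; h^{-1}
\]
converges precisely because $m>d/2$, yielding $E\{\Xi_j\} \lesssim h^{-1}/\sqrt{n}$. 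The sup and variance envelopes on $\cG$ are $M\lesssim h^{-1}$ and $\sigma^2 \le \|f\|_\infty^2\,E\{f^2\} \lesssim h^{-1}$, respectively.

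Talagrand's inequality then gives, with probability at least $1 - e^{-t}$,
\[
\Xi_j \;\le\; C\bigl(h^{-1}/\sqrt{n} \;+\; \sqrt{h^{-1}t/n} \;+\; h^{-1}t/n\bigr).
\]
Setting $t = C\log N$ and union-bounding over $j=1,\dots,s$ (absorbing $\log s \le \log N$), each of the three terms is dominated by $\sqrt{\log N/(nh^{3-d/(2m)})}$ in the working regime $nh^a \gg \log N$, since $a = 3-d/(2m) > 2 > 1$ and $h \to 0$ make the elementary comparisons $h^{-1}/\sqrt{n}$, $\sqrt{h^{-1}\log N/n}$, $h^{-1}\log N/n \le \sqrt{\log N/(nh^a)}$ routine. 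This yields $\max_{1\le j\le s}\xi_j = O_P(\sqrt{\log N/(nh^a)})$ with $a = 3-d/(2m)$ and $b=1$, as claimed. The main obstacle is the sharp Gagliardo--Nirenberg envelope $\|f\|_\infty\lesssim h^{-1/2}$: using only Sobolev embedding $\|f\|_\infty \lesssim \|f\|_\cH \le \lambda^{-1/2}$ would yield the much weaker $\|f\|_\infty\lesssim h^{-m/d}$ and a substantially worse exponent of $h$. A secondary point is that the Dudley integral exactly balances against the Sobolev entropy to give the correct exponent $3 - d/(2m)$; any looseness in the chaining step or in the Lipschitz constant of $f\mapsto f^2$ would degrade it.
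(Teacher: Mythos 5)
Your proof is correct and arrives at the same exponent $a=3-d/(2m)$, but it takes a genuinely different (more self-contained) route than the paper. The paper recycles the machinery already set up for the Gaussian-RKHS case: it invokes \cite[Lemma 6.1]{yang2017non} to get a sub-Gaussian tail for $\sup_{g\in\mathcal{G}(p)}\|Z_j(g)\|$ in terms of the $\psi_2$-Orlicz entropy integral $J(p,1)$, then computes $J(p,1)=O(p^{d/(2m)})$ from the Sobolev covering bound $\log D(\varepsilon,\mathcal{G}(p),\|\cdot\|_{\sup})\lesssim (p/\varepsilon)^{d/m}$ of \cite{poggio2002mathematical}, and finally plugs into the already-derived relation (\ref{eq:emprocess}). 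You instead reduce $\xi_j$ to $\sup_{\|f\|\le1}|(P_j-P)f^2|$ by polarization, establish the sup-norm envelope $\|f\|_\infty\lesssim h^{-1/2}$ via Gagliardo--Nirenberg interpolation, bound $E\{\Xi_j\}\lesssim h^{-1}/\sqrt{n}$ by a Dudley chaining using the same Sobolev entropy estimate together with the $h^{-1/2}$-Lipschitz property of $f\mapsto f^2$, and close with Talagrand's/Bousquet's inequality plus a union bound over $j$. The two arguments share the same two core inputs --- the $(R/\varepsilon)^{d/m}$ entropy of Sobolev balls and the $h^{-1/2}$ envelope (your GN bound plays exactly the role of the paper's finite constant $c_K=\sup_g h^{1/2}\|g\|_{\sup}/\|g\|$) --- and the arithmetic checks out in both: with $p\asymp(h/\lambda)^{1/2}$ and $\lambda\asymp h^{2m/d}$, $h^{-1}p^{d/(2m)}\asymp h^{d/(4m)-3/2}=h^{-a/2}$, matching your final rate. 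What the paper's route buys is brevity by black-boxing the concentration step; what yours buys is transparency, using only textbook tools (polarization, Dudley, Talagrand) and making explicit that the Dudley integral converges precisely because $m>d/2$ and that the final exponent is exactly the product of the envelope and the entropy scale. One shared caveat you might note: the GN/envelope step tacitly compares $\|f\|_{L^2(\pi)}$ (controlled by $\|f\|$) with $\|f\|_{L^2(\mathrm{Leb})}$ (needed in GN and in the Lebesgue Sobolev entropy bound), which requires a quasi-uniform lower bound on $\pi$; this is implicit in the paper's reliance on $c_K<\infty$ as well and is not specific to your proof.
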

Following Theorem \ref{thm:of:thm1}, Theorem \ref{basic:thm:testing:null:distr} and Theorem \ref{thm:power:random:design}, we have the following result.
\begin{Corollary}\label{thin-plate:cor}
 Suppose $f \in S^m(\mathbb{I}^d)$ with $d=2$, Assumption \ref{A1} and Assumption \ref{A2} hold.
\begin{enumerate}%[$(a)$]
 \item  If $s=o(N^{\frac{(2m-d)^2}{2m(2m+d)}}/\log N)$ and $\lambda \asymp N^{-\frac{2m}{2m+d}}$, then $\|\bar{f}-f_0\| = O_P(N^{-m/(2m+d)})$. 

 \item If $s=o(N^\frac{4m^2-7dm+d^2}{(4m+d)m}/\log N)$ and $\lambda \asymp N^{-\frac{4m}{4m+d}}$, then the Wald-type test achieves minimax rate of testing $N^{-2m/(4m+d)}$.
\end{enumerate}
\end{Corollary}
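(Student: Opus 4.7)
The plan is to specialize the general machinery of Theorems \ref{thm:of:thm1}, \ref{basic:thm:testing:null:distr}, and \ref{thm:power:random:design} to the thin-plate spline setting, where Lemma \ref{le:thin-plate} supplies the constants $a=3-d/(2m)$, $b=1$ in Assumption \ref{A4}. The first order of business is to pin down $h$: since $\mu_\nu\asymp\nu^{-2m/d}$, a direct sum estimate yields
\[
h^{-1}=\sum_{\nu\ge 1}\frac{1}{1+\lambda/\mu_\nu}\asymp\lambda^{-d/(2m)},\qquad\text{so}\qquad h\asymp\lambda^{d/(2m)}.
\]
All subsequent reductions are algebraic rearrangements of the general conditions ``$\log^b N=o(nh^a)$'' (for estimation) and ``$\log^b N=o(nh^{a+1})$'' (for testing) using $n=N/s$.

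For part~(1), choose $\lambda\asymp N^{-2m/(2m+d)}$, so that $h\asymp N^{-d/(2m+d)}$. The two terms on the right-hand side of (\ref{cor:thm1:eqn:1}) both scale as $N^{-2m/(2m+d)}$, matching the Stone--Nussbaum minimax rate. It remains to translate the admissibility condition $\log N=o(nh^a)$ into a bound on $s$. With $a=(6m-d)/(2m)$,
\[
Nh^{a}\asymp N^{1-d(6m-d)/(2m(2m+d))}=N^{(2m-d)^2/(2m(2m+d))},
\]
so $\log N=o(Nh^a/s)$ is equivalent to $s=o(N^{(2m-d)^2/(2m(2m+d))}/\log N)$, which is precisely the hypothesis of part~(1). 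Theorem \ref{thm:of:thm1} then delivers $\|\bar f-f_0\|=O_P(N^{-m/(2m+d)})$.

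For part~(2), choose $\lambda\asymp N^{-4m/(4m+d)}$, so that $h\asymp N^{-2d/(4m+d)}$. Under $s$ dominated by the asserted rate, equation (\ref{eq:trade_off:testing}) gives the test statistic separation distance
\[
d_{N,\lambda}\asymp \lambda^{1/2}+(Nh^{1/2})^{-1/2}\asymp N^{-2m/(4m+d)}+N^{-2m/(4m+d)}\asymp N^{-2m/(4m+d)},
\]
so the two contributions balance exactly at the minimax testing rate (in the sense of \cite{ingster1993asymptotically}; the lower bound for Sobolev balls on $\mathbb{I}^d$ is classical). To see that the hypothesis on $s$ suffices for invoking Theorems \ref{basic:thm:testing:null:distr} and \ref{thm:power:random:design}, compute, with $a+1=(8m-d)/(2m)$,
\[
Nh^{a+1}\asymp N^{1-d(8m-d)/(m(4m+d))}=N^{(4m^2-7dm+d^2)/(m(4m+d))},
\]
so $\log N=o(nh^{a+1})$ is equivalent to the stated bound on $s$. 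One should also check the auxiliary requirement $Nh^2\to\infty$, which becomes $N^{(4m-3d)/(4m+d)}\to\infty$ and thus implicitly demands $m>3d/4$ (for $d=2$ this is $m>3/2$, consistent with the restrictions appearing in the companion corollaries).

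The arithmetic above is entirely routine once Lemma \ref{le:thin-plate} is in hand; the only genuinely substantive step is the verification of Assumption \ref{A4} with $a=3-d/(2m)$, $b=1$, which is delegated to Lemma \ref{le:thin-plate} itself. The anticipated obstacle is therefore not in the present corollary but in that lemma, where the sharp EP bound for the multivariate Sobolev kernel must be obtained; the present proof only records the consequences. A secondary, purely presentational concern is keeping the exponents straight: the nested algebraic simplifications in the displays for $Nh^a$ and $Nh^{a+1}$ are the step most prone to slips, and should be written out carefully to confirm that the $(2m-d)^2$ and $4m^2-7dm+d^2$ factors emerge as claimed.
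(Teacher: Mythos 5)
Your proof is correct and follows the same route the paper intends but leaves unwritten: specialize Theorems \ref{thm:of:thm1}, \ref{basic:thm:testing:null:distr}, \ref{thm:power:random:design} with $h\asymp\lambda^{d/(2m)}$ and the thin-plate constants $a=3-d/(2m)$, $b=1$ from Lemma \ref{le:thin-plate}, then translate $\log N=o(nh^{a})$ resp.\ $\log N=o(nh^{a+1})$ (with $n=N/s$) into bounds on $s$; the displayed exponents $(2m-d)^2/(2m(2m+d))$ and $(4m^2-7dm+d^2)/(m(4m+d))$ both check out under the stated choices of $\lambda$. Your added observation that $Nh^{2}\to\infty$ silently forces $m>3d/4$ (so $m>3/2$ when $d=2$) is a sound point that the corollary's statement omits.
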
  
Corollary \ref{thin-plate:cor} demonstrates upper bounds on $s$.
These upper bounds are smaller compared with Corollary \ref{smoothing:spline:cor} in the univariate case, since the proof technique in bounding the empirical process $\xi_j$ here is not as sharp as the Green function technique used in Proposition \ref{prop:EL09} for the univariate example.

\section{Simulation}\label{sec:num:smooth}
In this section, we examined the performance of our proposed estimation and testing procedures versus various choices of number of machines in three examples based on simulated datasets. 

\subsection{Smoothing spline regression}
The data were generated from the following regression model 
\begin{equation}\label{model:eg1}
Y_i = c*(0.6\sin(1.5\pi X_i)) + \epsilon_i,\,\,\,\,i=1,\cdots,N, 
\end{equation}
where $X_i\overset{iid}{\sim}\text{Unif}[0,1]$, $\epsilon_i\overset{iid}{\sim} N(0,1)$ and $c$ is a constant. 
Cubic spline (i.e., $m=2$ in Section \ref{eg:example1}) was employed for estimating the regression function.
To display the impact of the number of divisions $s$ on statistical performance,
we set sample sizes $N=2^l$ for $9\le l\le 13$ and chose $s=N^\rho$
for $0.1\leq \rho \leq 0.8$. To examine the estimation procedure, 
we generated data from model (\ref{model:eg1}) with $c=1$. 
Mean squared errors (MSE) were reported based on 100 independent replicated experiments.
The left panel of Figure \ref{fig:smoothing} summarizes the results.
Specifically, it displays that the MSE increases
as $s$ does so; while the MSE increases suddenly when $\rho\approx0.7$, where
$\rho\equiv\log(s)/\log(N)$. 
Recall that the theoretical upper bound for $s$, is $N^{0.8}$;
see Corollary \ref{smoothing:spline:cor}.
Hence, estimation performance becomes worse near this theoretical boundary.

We next consider the hypothesis testing problem $H_0: f=0$. 
To examine the proposed Wald test,
we generated data from model (\ref{model:eg1}) at both $c=0,1$;
$c=0$ used for examining the size of the test,
and $c=1$ used for examining the power of the test.
Significance level was chosen as 0.05.
Both size and power were calculated as the proportions of rejections based on 500 independent replications. The middle and right panels of Figure \ref{fig:smoothing}
summarize the results.
Specifically, the right panel shows that the size approaches the nominal level $0.05$ under 
various choices of $(s,N)$, showing the validity of the Wald test.
The middle panel displays that
the power increases when $\rho$ decreases; the power maintains at $100\%$ when $\rho\le 0.5$ and $N\geq 4096$. Whereas the power quickly drops to zero when $\rho\ge0.6$. This is consistent
with our theoretical finding. Recall that the theoretical upper bound for $s$ is $N^{0.56}$; see Corollary \ref{smoothing:spline:cor}.
The numerical results also reveal that the upper bound of $s$ to achieve optimal testing is
indeed smaller than the one required for optimal estimation. 

\begin{figure}[h]\label{fig:smoothing}
  \centering
  \includegraphics[scale=0.35]{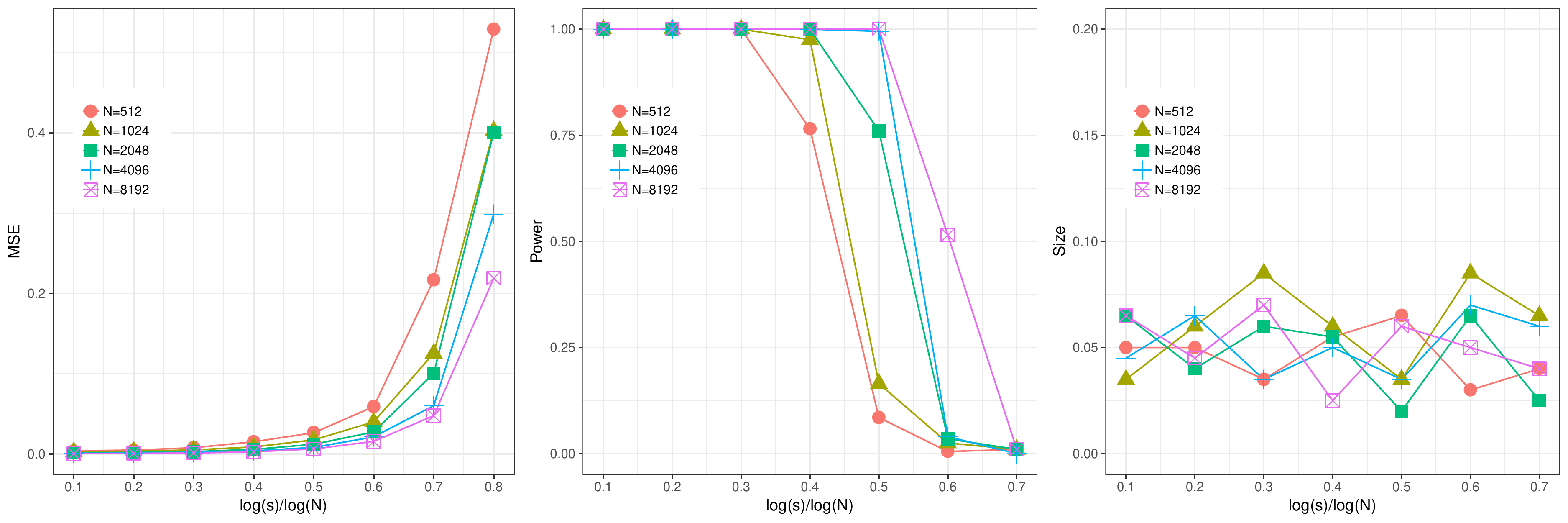}
  \caption{Smoothing Spline Regression. (a) MSE of $\bar{f}$ versus $\rho\equiv\log(s)/\log(N)$.
  (b) Power of the Wald test versus $\rho$.
  (c) Size of the Wald test versus $\rho$.}
\end{figure}

\subsection{Nonparametric additive regression}
We generated data from the following nonparametric model of two additive components
\begin{equation}\label{eq:add}
Y_i = c*f(X_{i1},X_{i2}) + \epsilon_i,\,\,i=1,\cdots,N,
\end{equation}
where $f(x_1,x_2) = 0.4\sin(1.5\pi x_1)+ 0.1(0.5-x_2)^3$, 
and $X_{i1},X_{i2}\overset{iid}{\sim}\text{Unif}[0,1]$, 
$\epsilon_i \overset{iid}{\sim} N(0,1)$, and $c$ is a constant. 
To examine the estimation procedure, 
we generated data from (\ref{eq:add}) with $c=1$.
To examine the testing procedure, we generated data at $c=0,1$.
$N,s$ were chosen to be the same as the smoothing spline example in Section \ref{sec:num:smooth}.
Results are summarized in Figure \ref{fig:additive}.
The interpretations are again similar to Figure \ref{fig:smoothing}, only with a slightly different asymptotic  trend. 
Specifically,
the MSE suddenly increases at $\rho \approx 0.6$, and the power quickly approaches one at $\rho \approx 0.5$.  The sizes are around the nominal level 0.05 for all cases.
\begin{figure}[h]\label{fig:additive}
  \centering
  \includegraphics[scale=0.35]{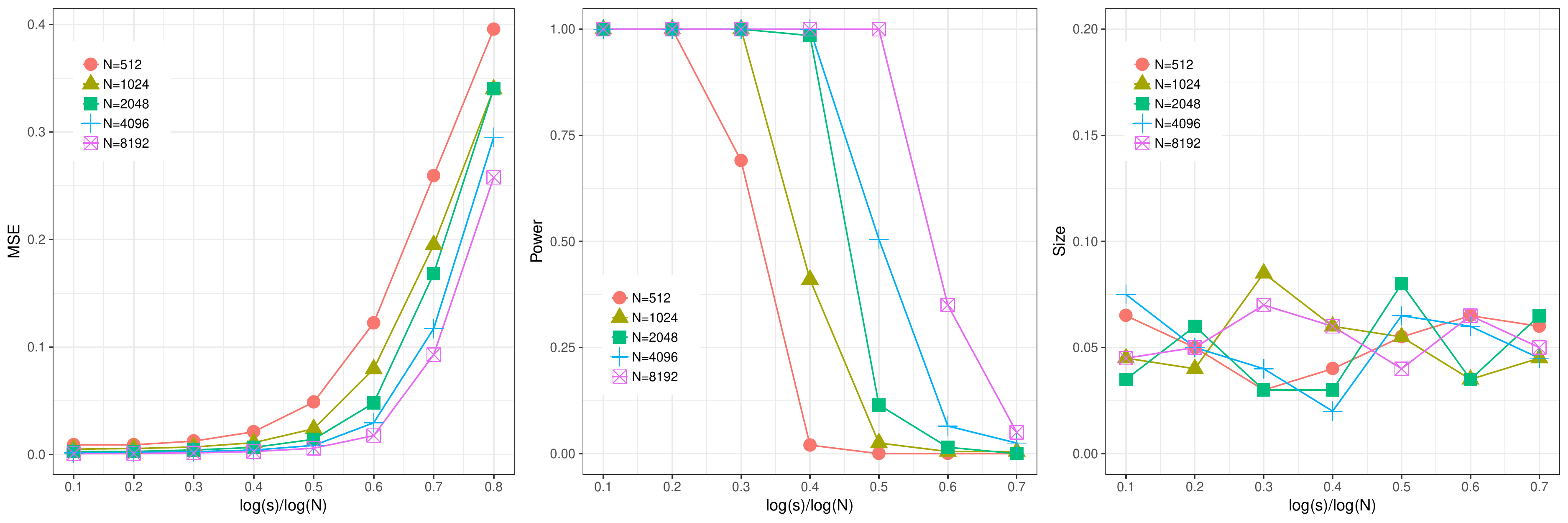}
  \caption{Additive Regression Model. (a) MSE of $\bar{f}$ versus $\rho\equiv\log(s)/\log(N)$.
  (b) Power of the Wald test versus $\rho$.
  (c) Size of the Wald test versus $\rho$.}
\end{figure}

\section{Conclusion}
Our work offers theoretical insights on how to allocate data in parallel computing for KRR in both estimation and testing procedures. In comparison with \cite{zhang2013divide} and \cite{shang2017computational}, our work provides a general and unified treatment of such problems in modern diverging-dimension or big data settings. Furthermore, using the green function for equivalent kernels to provide a sharp concentration bound on the empirical processes related to $s$, we have improved the upper bound of the number of machines in smoothing spline regression by \cite{zhang2013divide} from $N^{(2m-1)/(2m+1)}/\log{N}$ to $N^{2m/(2m+1)}/\log{N}$ for optimal estimation, which is proven un-improvable in \cite{shang2017computational} (upto a logarithmic factor). In the end, we would like to point out that our theory is useful in designing a distributed version of generalized cross validation method that is developed to choose tuning parameter $\lambda$ and the number of machines $s$; see \cite{xu2016optimal}.

\appendix
\section{Proofs of main results}
\subsection{Notation table}
\begin{table}[h]
    \centering
\begin{tabular}{c|p{0.6\textwidth}}
\hline
$N$ & sample size\\
  $Y$ & response \\
  $X$ & covariate \\
  $\epsilon$ & random error \\
  $\mathcal{H}$ & reproducing kernel Hilbert space (RKHS) \\
  $\pi(x)$ & density distribution\\
  $d$ & dimension of covariate \\
  $\langle \cdot, \cdot \rangle_{\mathcal{H}} $, $\|\cdot\|_{\mathcal{H}}$ & the inner product and norm under $\mathcal{H}$ \\
  $R(\cdot, \cdot)$ & kernel function under the norm $\|\cdot \|_{\mathcal{H}}$ \\
  $\mu_i$ & eigenvalue  \\
  $\varphi$ & eigenfunction \\
  $\langle \cdot, \cdot \rangle _{L^2_\pi (\mathcal{X})}$ & $L_2$ inner product  \\
  $\langle \cdot, \cdot \rangle $, $\|\cdot \|$ & embedded inner product and norm   \\
  $V(\cdot, \cdot)$ & $L_2$ inner product  \\
  $K(\cdot, \cdot)$ & kernel function equipped with $\|\cdot \|$  \\
  $K_x(\cdot)$ & $=K(x,\cdot)$\\
  $s$ & number of division \\
  $I_j$ & the set of indices of the observation from subsample $j$  \\
  $n$ & the subsample size  \\
  $\widehat{f}_j$ & the estimate of $f$ based on subsample $j$  \\
  $\lambda$ & penalization parameter\\
  $\bar{f}$ & D\&C estimator\\
  $T_{N,\lambda}$ & test statistic  \\
  $\|\cdot\|_{sup}$ & the supremum norm \\
  $h^{-1}$ & $= \sum_{\nu\geq 1} \frac{1}{1+\lambda/\mu_\nu}$  \\
  $\xi_j$ & $=\sup_{\substack{f,g\in\mathcal{H}\\ \|f\|=\|g\|=1}}|P_jfg-Pfg|$  \\
  $\mathcal{P}_\lambda$ &  self-adjoint operator satisfies $\langle \mathcal{P}_\lambda f,g\rangle=\lambda\langle f,g\rangle_{\mathcal{H}}$\\
  $\textbf{K}$ & empirical kernel matrix \\
  $S^m(\mathbb{I})$ & the $m$th order Sobolev space on $\mathbb{I}\equiv [0,1]$ \\
  \hline
\end{tabular}\\
\caption{A table that lists all useful notation and their meanings. }
\end{table}

\subsection{Some preliminary results}
\begin{Lemma}\label{le:prelim}
\begin{enumerate}
\item For any $x,y \in \mathcal{X}$, $K(x,y) \leq c_{\varphi}^2h^{-1}$.
\item For any $f\in \mathcal{H}$, $\|\mathcal{P}_\lambda f\| \leq \lambda^{1/2}\|f\|_{\mathcal{H}}$.
\end{enumerate}
\end{Lemma}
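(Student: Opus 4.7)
For Part 1, the plan is to invoke the explicit spectral expansion of $K$ derived earlier in the excerpt, namely
$$K(x,y) = \sum_{\nu \geq 1} \frac{\varphi_\nu(x)\varphi_\nu(y)}{1 + \lambda/\mu_\nu},$$
and to majorize each term using $|\varphi_\nu(x)\varphi_\nu(y)| \leq c_\varphi^2$, which is guaranteed by the uniform boundedness in Assumption \ref{A3}. The remaining sum $\sum_{\nu\geq 1}(1+\lambda/\mu_\nu)^{-1}$ is exactly $h^{-1}$ by definition, so the bound $K(x,y) \leq c_\varphi^2 h^{-1}$ drops out immediately. Absolute convergence of the series (and hence the legitimacy of the term-by-term majorization) is ensured because $h^{-1}$ is finite under Assumption \ref{A3}.

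For Part 2, the plan is to exploit the defining identity of the self-adjoint operator $\mathcal{P}_\lambda$. Setting $g = \mathcal{P}_\lambda f$ in $\langle \mathcal{P}_\lambda f, g\rangle = \lambda \langle f, g\rangle_{\mathcal{H}}$ gives
$$\|\mathcal{P}_\lambda f\|^2 = \lambda\,\langle f, \mathcal{P}_\lambda f\rangle_{\mathcal{H}} \leq \lambda\,\|f\|_{\mathcal{H}}\,\|\mathcal{P}_\lambda f\|_{\mathcal{H}},$$
where the inequality is Cauchy--Schwarz in the $\mathcal{H}$ inner product. The remaining ingredient is a comparison between $\|\cdot\|_{\mathcal{H}}$ and $\|\cdot\|$: from the definition \eqref{eq:new_norm} together with the nonnegativity of $V(g,g)$, one has $\lambda\|g\|_{\mathcal{H}}^2 \leq \|g\|^2$ for every $g\in\mathcal{H}$. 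Applying this with $g = \mathcal{P}_\lambda f$ yields $\|\mathcal{P}_\lambda f\|_{\mathcal{H}} \leq \lambda^{-1/2}\|\mathcal{P}_\lambda f\|$. Substituting back and dividing through by $\|\mathcal{P}_\lambda f\|$ (the case $\mathcal{P}_\lambda f = 0$ being trivial) delivers $\|\mathcal{P}_\lambda f\| \leq \lambda^{1/2}\|f\|_{\mathcal{H}}$.

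Neither part presents a substantive obstacle. If anything needs care it is merely the appeal to Assumption \ref{A3} to ensure absolute convergence in Part 1, and the existence/self-adjointness of $\mathcal{P}_\lambda$ in Part 2, which the authors have already deferred to \cite{shang2013local}. An alternative derivation for Part 2 would be to compute $\mathcal{P}_\lambda f$ explicitly in the eigen-basis as $\sum_\nu \frac{\lambda/\mu_\nu}{1+\lambda/\mu_\nu} V(f,\varphi_\nu)\varphi_\nu$ and verify the bound coefficient-by-coefficient using $(\lambda/\mu_\nu)^2/(1+\lambda/\mu_\nu) \leq \lambda/\mu_\nu$, but the self-adjointness argument above is substantially shorter and does not require the eigen-calculation.
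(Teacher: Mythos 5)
Your proposal is correct and follows essentially the same approach as the paper's proof: Part~1 is verbatim the paper's spectral-expansion argument, and Part~2 uses exactly the same three ingredients (the defining identity of $\mathcal{P}_\lambda$, Cauchy--Schwarz in the $\mathcal{H}$ inner product, and the norm comparison $\lambda^{1/2}\|g\|_{\mathcal{H}}\leq\|g\|$), the only cosmetic difference being that you test against $g=\mathcal{P}_\lambda f$ directly while the paper phrases it as a supremum over $\|g\|\leq 1$.
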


\begin{proof}
$(a)$ 
\begin{equation*}
K(x,y) = \sum_{\nu\geq 1} \frac{\varphi_\nu(x)\varphi_\nu(y)}{1+\lambda /\mu_\nu} \leq c_\varphi^2 h^{-1},
\end{equation*}
where the last inequality is by Assumption \ref{A3} and the definition of $h^{-1}$. \\
$(b)$ 
\begin{eqnarray*}
\|\mathcal{P}_\lambda f\| = &\sup_{g\in \mathcal{H}, \|g\|\leq 1} \langle \mathcal{P}_\lambda f, g \rangle = \sup_{g\in \mathcal{H}, \|g\|\leq 1} \lambda \langle f, g \rangle_{\mathcal{H}} 
\leq  \sup_{g\in \mathcal{H}, \|g\|\leq 1} \lambda^{1/2}\|f\|_{\mathcal{H}}\lambda^{1/2}\|g\|_{\mathcal{H}}
\leq \lambda^{1/2} \|f\|_{\mathcal{H}}.
\end{eqnarray*}
\end{proof}

\subsection{Proofs in Section \ref{sec:est}}

Our theoretical analysis relies on a set of Fr\'{e}chet derivatives to be specified below:
for $j=1,2,\ldots,s$,
the Fr\'{e}chet derivative of $\ell_{j,\lambda}$ can be identified as: for any $f,f_1,f_2\in\mathcal{H}$,
\begin{eqnarray*}
D\ell_{j,\lambda}(f)f_1&=&-\frac{1}{n}\sum_{i\in I_j}(Y_i-f(X_i))
\langle K_{X_i},f_1\rangle+\langle
\mathcal{P}_\lambda f,f_1\rangle:=\langle S_{j,\lambda}(f),f_1 \rangle,\\
DS_{j,\lambda}(f)f_1f_2&=&\frac{1}{n}\sum_{i\in I_j}f_2(X_i)\langle K_{X_i},f_1\rangle
+\langle\mathcal{P}_\lambda f_2,f_1\rangle=
\langle DS_{j,\lambda}(f)f_2,f_1\rangle,\\
D^2S_{j,\lambda}(f)&\equiv&0.
\end{eqnarray*}
More specifically,
\begin{eqnarray*}
S_{j,\lambda}(f)&=&-\frac{1}{n}\sum_{i\in I_j}(Y_i-f(X_i))K_{X_i}+\mathcal{P}_\lambda f,\\
DS_{j,\lambda}(f)g&=&\frac{1}{n}\sum_{i\in I_j}g(X_i)K_{X_i}+\mathcal{P}_\lambda g.
\end{eqnarray*}
Define $S_\lambda(f)=E\{S_{j,\lambda}(f)\}$, hence, $DS_\lambda(f)=E\{DS_{j,\lambda}(f)\}$.
It follows from \cite{shang2013local} that
$$\langle DS_\lambda(f)f_1,f_2\rangle=\langle f_1,f_2\rangle$$
for any $f,f_1,f_2\in\mathcal{H}$ which leads to $DS_\lambda(f)=id$.

\begin{proof}[Proof of Lemma \ref{basic:thm:estimation}]
Throughout the proof, let $\widetilde{f}_j=E\{\widehat{f}_j|\textbf{X}_j\}$.
It is easy to see that 
\begin{eqnarray*}
0&=&S_{j,\lambda}(\widehat{f}_j)=-\frac{1}{n}\sum_{i\in I_j}(Y_i-\widehat{f}_j(X_i))K_{X_i}+\mathcal{P}_\lambda\widehat{f}_j,\\
0&=&\frac{1}{n}\sum_{i\in I_j}(\widetilde{f}_j(X_i)-f_0(X_i))K_{X_i}+\mathcal{P}_\lambda\widetilde{f}_j.
\end{eqnarray*}
Subtracting the two equations one gets that
\begin{equation}\label{thm1:eqn:0}
\frac{1}{n}\sum_{i\in I_j}(\widehat{f}_j-\widetilde{f}_j)(X_i)K_{X_i}+\mathcal{P}_\lambda(\widehat{f}_j-\widetilde{f}_j)=\frac{1}{n}\sum_{i\in I_j}\epsilon_i K_{X_i}.
\end{equation}
Equation (\ref{thm1:eqn:0}) shows that 
\[
\widehat{f}_j-\widetilde{f}_j=\argmin_{f\in\mathcal{H}}\ell_{j,\lambda}^\star(f)
\equiv\argmin_{f\in\mathcal{H}}\frac{1}{2n}\sum_{i\in I_j}(\epsilon_i-f(X_i))^2
+\frac{\lambda}{2}\|f\|_{\mathcal{H}}^2.
\]

Let $e_j=\frac{1}{n}\sum_{i\in I_j}\epsilon_i K_{X_i}$ and $\varepsilon_j=\widehat{f}_j-\widetilde{f}_j$.
Then consider Taylor's expansion
\begin{eqnarray*}
\ell_{j,\lambda}^\star(e_j)-\ell_{j,\lambda}^\star(\varepsilon_j)&=&\frac{1}{2}D^2\ell_{j,\lambda}^\star(\varepsilon_j)(e_j-\varepsilon_j)(e_j-\varepsilon_j)\\
&=&\frac{1}{2}P_j(e_j-\varepsilon_j)^2+\frac{1}{2}\langle\mathcal{P}_\lambda (e_j-\varepsilon_j),e_j-\varepsilon_j\rangle,\\
\ell_{j,\lambda}^\star(\varepsilon_j)-\ell_{j,\lambda}^\star(e_j)&=&
D\ell_{j,\lambda}^\star(e_j)(\varepsilon_j-e_j)+\frac{1}{2}D^2\ell_{j,\lambda}^\star(e_j)
(\varepsilon_j-e_j)(\varepsilon_j-e_j)\\
&=&(P_j-P)(e_j(\varepsilon_j-e_j))+\frac{1}{2}P_j(\varepsilon_j-e_j)^2+
\frac{1}{2}\langle\mathcal{P}_\lambda(\varepsilon_j-e_j),\varepsilon_j-e_j\rangle.
\end{eqnarray*}
Adding the two equations one obtains that
\begin{eqnarray*}
&&P_j(\varepsilon_j-e_j)^2+
\langle\mathcal{P}_\lambda(\varepsilon_j-e_j),\varepsilon_j-e_j\rangle+(P_j-P)(e_j(\varepsilon_j-e_j))=0.
\end{eqnarray*}

Uniformly for $j$, it holds that 
\begin{eqnarray*}
|(P_j-P)(e_j(\varepsilon_j-e_j))|&\le&\xi_j\|e_j\|\cdot\|\varepsilon_j-e_j\|,\\
P_j(\varepsilon_j-e_j)^2+\langle\mathcal{P}_\lambda(\varepsilon_j-e_j),(\varepsilon_j-e_j)\rangle
&\ge&(1-\xi_j)\|\varepsilon_j-e_j\|^2.
\end{eqnarray*}
Combining the two inequalities one gets that
\[
(1-\xi_j)\|\varepsilon_j-e_j\|^2\le\xi_j\|e_j\|\cdot\|\varepsilon_j-e_j\|.
\]
Taking expectations conditional on $\textbf{X}_j$ on both sides and noting that $\xi_j$ is $\sigma(\textbf{X}_j)$-measurable,
one gets that
\[
(1-\xi_j)E\{\|\varepsilon_j-e_j\|^2|\textbf{X}_j\}\le\xi_jE\{\|e_j\|\cdot\|\varepsilon_j-e_j\||\textbf{X}_j\}\le\xi_j E\{\|e_j\|^2|\textbf{X}_j\}^{1/2}E\{\|\varepsilon_j-e_j\|^2|\textbf{X}_j\}^{1/2}.
\]
By assumption
$\log^b{N}=o(nh^a)$ and Assumption \ref{A4}, 
$\max_{1\le j\le s}\xi_j=o_P(1)$, i.e., with probability approaching one 
$\max_{1\le j\le s}\xi_j\le 1/2$, hence, 
\begin{eqnarray}\label{thm1:eqn:1}
E\{\|\varepsilon_j-e_j\|^2|\textbf{X}_j\}&\le& 4\xi_j^2 E\{\|e_j\|^2|\textbf{X}_j\}\nonumber\\
&=&\frac{4\xi_j^2}{n^2}\sum_{i,i'\in I_j}E\{\epsilon_i\epsilon_{i'}K(X_i,X_{i'})|\textbf{X}_j\}\nonumber\\
&=&\frac{4\xi_j^2}{n^2}\sum_{i\in I_j}\sigma^2(X_i)K(X_i,X_i)\nonumber\\
&\le&\frac{4 c_\pi c_\varphi^2\xi_j^2}{nh},
\end{eqnarray}
where the last inequality follows from Assumption \ref{A1} and Lemma \ref{le:prelim} that $K(x,x)\le c_\varphi^2h^{-1}$.
This proves (\ref{basic:thm:estimation:conc:-1}).

By (\ref{thm1:eqn:1}) it is easy to derive
\begin{equation}\label{thm1:eqn:2}
E\{\|\widehat{f}_j-\widetilde{f}_j\|^2|\textbf{X}_j\}
\le \frac{4 c_\pi c_\varphi^2}{nh}.
\end{equation}

Now we look at $\|\widetilde{f}_j-f_0^\star\|$, where $f_0^\star=(id-\mathcal{P}_\lambda)f_0$.  
It is easy to see that $\widetilde{f}_j$ is the minimizer of the following problem.
\[
\widetilde{f}_j=\argmin_{f\in\mathcal{H}}\widetilde{\ell}_{j,\lambda}(f)\equiv
\argmin_{f\in\mathcal{H}}\frac{1}{2n}\sum_{i\in I_j}(f_0(X_i)-f(X_i))^2+\frac{\lambda}{2}\|f\|_{\mathcal{H}}^2.
\]
We use a similar strategy for handling part (\ref{basic:thm:estimation:conc:-1}).
Note that
\begin{eqnarray*}
\widetilde{\ell}_{j,\lambda}(f_0^\star)-\widetilde{\ell}_{j,\lambda}(\widetilde{f}_j)
&=&\frac{1}{2}D^2\widetilde{\ell}_{j,\lambda}(\widetilde{f}_j)(f_0^\star-\widetilde{f}_j)
(f_0^\star-\widetilde{f}_j)\\
&=&\frac{1}{2}P_j(f_0^\star-\widetilde{f}_j)^2+\frac{1}{2}\langle\mathcal{P}_\lambda(f_0^\star-\widetilde{f}_j),
f_0^\star-\widetilde{f}_j\rangle,\\
\widetilde{\ell}_{j,\lambda}(\widetilde{f}_j)-\widetilde{\ell}_{j,\lambda}(f_0^\star)
&=&P_j(f_0^\star-f_0)(\widetilde{f}_j-f_0^\star)+\langle\mathcal{P}_\lambda f_0^\star,\widetilde{f}_j-f_0^\star\rangle\\
&&+\frac{1}{2}P_j(\widetilde{f}_j-f_0^\star)^2+\frac{1}{2}\langle\mathcal{P}_\lambda(\widetilde{f}_j-f_0^\star),
\widetilde{f}_j-f_0^\star\rangle.
\end{eqnarray*}
Adding the two equations, one gets that
\begin{eqnarray*}
&&P_j(\widetilde{f}_j-f_0^\star)^2+\langle\mathcal{P}_\lambda(\widetilde{f}_j-f_0^\star),
\widetilde{f}_j-f_0^\star\rangle\\
&=&P_j(f_0-f_0^\star)
(\widetilde{f}_j-f_0^\star)-\langle\mathcal{P}_\lambda f_0^\star,
\widetilde{f}_j-f_0^\star\rangle\\
&=&(P_j-P)(f_0-f_0^\star)(\widetilde{f}_j-f_0^\star)+P(f_0-f_0^\star)(\widetilde{f}_j-f_0^\star)-\langle
\mathcal{P}_\lambda f_0^\star,\widetilde{f}_j-f_0^\star\rangle\\
&=&(P_j-P)(f_0-f_0^\star)(\widetilde{f}_j-f_0^\star)+\langle f_0-f_0^\star,\widetilde{f}_j-f_0^\star\rangle \\
& &-\langle\mathcal{P}_\lambda(f_0-f_0^\star),\widetilde{f}_j-f_0^\star\rangle-\langle
\mathcal{P}_\lambda f_0^\star,\widetilde{f}_j-f_0^\star\rangle\\
&=&(P_j-P)(f_0-f_0^\star)(\widetilde{f}_j-f_0^\star)+\langle
f_0-f_0^\star-\mathcal{P}_\lambda(f_0-f_0^\star)-\mathcal{P}_\lambda f_0^\star,\widetilde{f}_j-f_0^\star\rangle\\
&=&(P_j-P)(f_0-f_0^\star)(\widetilde{f}_j-f_0^\star).
\end{eqnarray*}
Therefore, 
\[
(1-\xi_j)\|\widetilde{f}_j-f_0^\star\|^2\le\xi_j\|f_0-f_0^\star\|\times\|\widetilde{f}_j-f_0^\star\|
=\xi_j\|\mathcal{P}_\lambda f_0\|\times \|\widetilde{f}_j-f_0^\star\|
\le C\xi_j\lambda^{1/2}\|f_0\|_{\mathcal{H}}\|\widetilde{f}_j-f_0^\star\|,
\]
implying that, with probability approaching one, for any $1\le j\le s$,
$\|\widetilde{f}_j-f_0^\star\|\le 2C\xi_j\lambda^{1/2}\|f_0\|_{\mathcal{H}}$.
This proves (\ref{basic:thm:estimation:conc:0}).
\end{proof}

\begin{proof}[Proof of Theorem \ref{thm:of:thm1}]
Recall $f_0^\star=(id-\mathcal{P}_\lambda)f_0$ and $\widetilde{f}_j=E\{\widehat{f}_j|\textbf{X}_j\}$.
Also notice that $\frac{1}{N}\sum_{i=1}^N\epsilon_i K_{X_i}=\frac{1}{s}\sum_{j=1}^s e_j$.
By direct calculations and Lemma \ref{basic:thm:estimation}, we have with probability approaching one,
\begin{eqnarray*}
&&
E\{\|\bar{f}-f_0^\star-\frac{1}{N}\sum_{i=1}^N\epsilon_i K_{X_i}\|^2|\textbf{X}\}\\
&=&\frac{1}{s^2}\sum_{j=1}^s E\{\|\widehat{f}_j-\widetilde{f}_j-e_j\|^2|\textbf{X}_j\}
+\frac{1}{s^2}\|\sum_{j=1}^s(\widetilde{f}_j-f_0^\star)\|^2\\
&\le&4\left(\frac{ c_\pi  c_\varphi^2}{Nh}+\lambda \|f_0\|_{\mathcal{H}}^2\right)\max_{1\le j\le s}\xi_j^2.
\end{eqnarray*}
This proves (\ref{cor:thm1:eqn:0}). The result (\ref{cor:thm1:eqn:1}) immediately follows
by the assumption $\max_{1\le j\le s}\xi_j^2=o_P(1)$.
\end{proof}

\subsection{Proofs in Section \ref{sec:general:testing}}

\begin{proof}[Proof of Lemma \ref{basic:lemma:for:testing}]
It is easy to see that 
\[
\boldsymbol{\epsilon}'\textbf{K}\boldsymbol{\epsilon}
=\sum_{i=1}^N\epsilon_i^2 K(X_i,X_i)+W(N).
\]
Since 
\[
Var\left(\sum_{i=1}^N\epsilon_i^2 K(X_i,X_i)\right)\le NE\{\epsilon_i^4 K(X_i,X_i)^2\}
\le \tau c_\varphi^4Nh^{-2},
\]
where the last ``$\le$" follows by Assumption \ref{A2} and Lemma \ref{le:prelim} that $K(x,x)\le c_\varphi^2 h^{-1}$,
we get that
\begin{eqnarray*}
\sum_{i=1}^N\epsilon_i^2 K(X_i,X_i)&=&E\{\sum_{i=1}^N\epsilon_i^2 K(X_i,X_i)\}
+O_P\left(\sqrt{c_\varphi^4 Nh^{-2}}\right)\\
&=&\sigma^2Nh^{-1}+O_P(\sqrt{c_\varphi^4Nh^{-2}}).
\end{eqnarray*}

Next we prove asymptotic normality of $W(N)$.
Note $\sigma^2(N)=E\{W(N)^2\}$. Let
$G_I$, $G_{II}$, $G_{IV}$ be defined as
\begin{eqnarray*}
G_I&=&\sum_{1\le i<t\le n} E\{W_{it}^4\},\\
G_{II}&=&\sum_{1\le i<t<k\le n} (E\{W_{it}^2W_{ik}^2\}+
E\{W_{ti}^2W_{tk}^2\}+
E\{W_{ki}^2W_{kt}^2\})\\
G_{IV}&=&\sum_{1\le i<t<k<l\le n} (E\{W_{it} W_{ik} W_{lt} W_{lk}\}+
E\{W_{it} W_{il} W_{kt} W_{kl}\}+ E\{W_{ik}W_{il}W_{tk}W_{tl}\}).
\end{eqnarray*}
Since $K(x,x)\le c_\varphi^2 h^{-1}$, we have
$G_I=O(N^2h^{-4})$ and $G_{II}=O(N^3  h^{-4})$.
It can also be shown that for pairwise distinct $i,k,t,l$,
\begin{eqnarray*}
&&E\{W_{ik}W_{il}W_{tk}W_{tl}\}\\
&=&2^4E\{\epsilon_i^2\epsilon_k^2\epsilon_t^2\epsilon_l^2
K(X_i,X_k)K(X_i,X_l)K(X_t,X_k)
K(X_t,X_l)\}\\
&=&2^4\sigma^8\sum_{\nu=1}^\infty\frac{1}{(1+\lambda/\mu_\nu)^4}=O(h^{-1}),
\end{eqnarray*}
which implies that $G_{IV}=O(N^4h^{-1})$.
In the mean time, a straight algebra leads to that
\begin{eqnarray*}
\sigma^2(N)&=&4\sigma^4{N \choose 2}\sum_{\nu=1}^\infty\frac{1}{(1+\lambda/\mu_\nu)^2}\\
&=&2\sigma^4 N(N-1)\sum_{\nu\ge 1}\frac{1}{(1+\lambda/\mu_\nu)^2}\asymp N^2 h^{-1},
\end{eqnarray*}
where the last conclusion follows by Proposition \ref{prop:var_eff}.
Thanks to the conditions $h\to0$,  
$Nh^2\to\infty$, 
$G_I,G_{II}$ and $G_{IV}$ are all of order $o(\sigma^4(N))$.
Then it follows by \cite{de1987central} that as $N\rightarrow\infty$,
\[
\frac{W(N)}{\sigma(N)}\overset{d}{\longrightarrow}N(0,1).
\]
The above limit leads to that $W(N)=O_P(Nh^{-1/2})$.
\end{proof}

\begin{proof}[Proof of Theorem \ref{basic:thm:testing:null:distr}]
The proof is based on Lemma \ref{basic:lemma:for:testing}.
Under $f_0=0$, it follows from Corollary \ref{thm:of:thm1} and Assumption \ref{A4} that
\[
E\{\|\bar{f}-\frac{1}{N}\sum_{i=1}^N\epsilon_i K_{X_i}\|^2|\textbf{X}\}=O_P\left(\frac{c_\varphi^2\log^b{N}}{Nnh^{1+a}}\right),
\]
leading to 
\[
\|\bar{f}-\frac{1}{N}\sum_{i=1}^N\epsilon_i K_{X_i}\|^2=O_P\left(\frac{c_\varphi^2\log^b{N}}{Nnh^{1+a}}\right).
\]
Following the proof of Lemma \ref{basic:thm:estimation} and the trivial fact $\widehat{f}_j=0$ when $f_0=0$,
we have for any $1\le j\le s$,
\begin{equation}\label{proof:bound}
E\{\|\widehat{f}_j-e_j\|^2|\textbf{X}_j\}\le \frac{4 c_\pi c_\varphi^2\xi_j^2}{nh},\,\,\,\,
E\{\|e_j\|^2|\textbf{X}_j\}\le\frac{ c_\pi c_\varphi^2}{nh},\,\,\,\,\textrm{a.s.}
\end{equation}
Therefore, by Cauchy-Schwartz inequality,
\[
E\{|\langle\widehat{f}_j-e_j,e_j\rangle|\big|\textbf{X}_j\}
\le \sqrt{E\{\|\widehat{f}_j-e_j\|^2|\textbf{X}_j\}E\{\|e_j\|^2|\textbf{X}_j\}}
\le \frac{2 c_\pi  c_\varphi^2}{nh}\xi_j,
\]
and hence, 
\begin{eqnarray*}
E\left\{\sum_{j=1}^s|\langle\widehat{f}_j-e_j,e_j\rangle|\bigg|\textbf{X}\right\}
\le\frac{2 c_\pi sc_\varphi^2}{nh}\max_{1\le j\le s}\xi_j.
\end{eqnarray*}
By Assumption \ref{A4}, the above leads to that
\[
\sum_{j=1}^s\langle\widehat{f}_j-e_j,e_j\rangle=O_P\left(\frac{sc_\varphi^2}{nh}\sqrt{\frac{\log^b{N}}{nh^a}}\right).
\]
Meanwhile, it holds that
\[
\sum_{j\neq l}\langle\widehat{f}_j-e_j,e_l\rangle=
\sum_{j<l}\langle\widehat{f}_j-e_j,e_l\rangle+
\sum_{j>l}\langle\widehat{f}_j-e_j,e_l\rangle
\equiv R_1+R_2,
\]
with 
\[
R_1=O_P\left(\frac{sc_\varphi^2}{nh}\sqrt{\frac{\log^b{N}}{nh^a}}\right),\,\,\,\,
R_2=O_P\left(\frac{sc_\varphi^2}{nh}\sqrt{\frac{\log^b{N}}{nh^a}}\right).
\]
To see this, note that
\begin{eqnarray*}
E\{R_1^2|\textbf{X}\}&=&\sum_{j<l}E\{|\langle\widehat{f}_j-e_j,e_l\rangle|^2|\textbf{X}\}\\
&\le&\sum_{j<l}E\{\|\widehat{f}_j-e_j\|^2\|e_l\|^2|\textbf{X}\}\\
&=&\sum_{j<l}E\{\|\widehat{f}_j-e_j\|^2|\textbf{X}_j\}E\{\|e_l\|^2|\textbf{X}_l\}\\
&\le&{s\choose 2}\frac{4 c_\pi ^2c_\varphi^4}{n^2h^2}\max_{1\le j\le s}\xi_j^2,
\end{eqnarray*}
where the last inequality is based on (\ref{proof:bound}).
Similar result holds for $R_2$.
Hence, by Lemma \ref{basic:lemma:for:testing} and direct algebra, we get that
\begin{eqnarray*}
T_{N,\lambda}&=&N^{-2}\boldsymbol{\epsilon}'\textbf{K}\boldsymbol{\epsilon}
+\frac{2}{s^2}\sum_{j,l=1}^s\langle\widehat{f}_j-e_j,e_l\rangle+\|\bar{f}-\frac{1}{N}\sum_{i=1}^N\epsilon_i
K_{X_i}\|^2\\
&=&N^{-2}\boldsymbol{\epsilon}'\textbf{K}\boldsymbol{\epsilon}
+\frac{2}{s^2}\sum_{j=1}^s\langle\widehat{f}_j-e_j,e_j\rangle
+\frac{2}{s^2}(R_1+R_2)+\|\bar{f}-\frac{1}{N}\sum_{i=1}^N\epsilon_i
K_{X_i}\|^2\\
&=&\frac{\sigma^2}{Nh}+\frac{W(N)}{N^2}+O_P\left(\frac{c_\varphi^2}{N^{3/2}h}\right)+O_P\left(
\frac{c_\varphi^2}{Nh}\sqrt{\frac{\log^b{N}}{nh^a}}\right)+O_P\left(\frac{c_\varphi^2\log^b{N}}{Nnh^{1+a}}\right)\\
&=&\frac{\sigma^2}{Nh}+\frac{W(N)}{N^2}+O_P\left(\frac{c_\varphi^2}{N^{3/2}h}\right)+O_P\left(
\frac{c_\varphi^2}{Nh}\sqrt{\frac{\log^b{N}}{nh^a}}\right).
\end{eqnarray*}
The last equality follows from the condition $\log^b{N}=o(nh^{a+1})$. 
Therefore, by $c_\varphi^4/(Nh)=o(1)$, 
$Nh\to\infty$ (from $Nh^2\to\infty$ and $h\to0$), condition
$\log^b{N}=o(nh^{a+1})$ 
and $\sigma^2(N)\asymp N^2h^{-1}$ (Lemma \ref{basic:lemma:for:testing}), 
as $N\to\infty$,
\begin{eqnarray*}
\frac{N^2}{\sigma(N)}\left(T_{N,\lambda}-\frac{\sigma^2}{Nh}\right)
&=&\frac{W(N)}{\sigma(N)}+O_P\left(\frac{c_\varphi^2}{\sqrt{Nh}}+c_\varphi^2\sqrt{\frac{\log^b{N}}{nh^{a+1}}}\right)\\
&=&\frac{W(N)}{\sigma(N)}+o_P(1)
\overset{d}{\to}N(0,1).
\end{eqnarray*}
Proof is completed.
\end{proof}

\begin{proof}[Proof of Theorem \ref{thm:power:random:design}]
For any $f\in\mathcal{H}$, 
define $R_f=\bar{f}-N^{-1}\sum_{i=1}^N\epsilon_i K_{X_i}-f+\mathcal{P}_\lambda f$.
By direct examinations, it holds that
\begin{eqnarray*}
&&\|\bar{f}\|^2-\sigma^2/(Nh)\\
&=&\|R_f+\frac{1}{N}\sum_{i=1}^N\epsilon_i K_{X_i}+f-\mathcal{P}_\lambda f\|^2-\sigma^2/(Nh)\\
&\ge&\left\{\boldsymbol{\epsilon}'\textbf{K}\boldsymbol{\epsilon}/N^2-\sigma^2/(Nh)\right\}
+\|f-\mathcal{P}_\lambda f\|^2-\frac{2}{N}\sum_{i=1}^N\epsilon_i(f-\mathcal{P}_\lambda f)(X_i)\\
&&+\frac{2}{N}\sum_{i=1}^N\epsilon_i R_f(X_i)-2\langle f-\mathcal{P}_\lambda f, R_f\rangle\\
&\equiv& T_1+T_2+T_3+T_4+T_5.
\end{eqnarray*}
It follows by (\ref{basic:lemma:for:testing:eqn:0}),
Theorem \ref{thm:of:thm1}, Assumption \ref{A4} that,
uniformly for $f\in\mathcal{H}$,
\begin{eqnarray*}
&& T_1=W(N)/N^2+O_P((N^{3/2}h)^{-1}),\,\,\,\,\textrm{(by (\ref{basic:lemma:for:testing:eqn:0}))}\\
&& P_f\left(|T_3|\ge \sigma\|f-\mathcal{P}_\lambda f\|/(\varepsilon\sqrt{N})\right)\le\varepsilon^2,
\,\,\,\,\textrm{for arbitrary $\varepsilon>0$}\\
&& T_4=O_P( b_{N,\lambda}/\sqrt{Nh}),\,\,\,\,\textrm{(by Theorem \ref{thm:of:thm1}, Assumption \ref{A4} and (\ref{basic:lemma:for:testing:eqn:0}))}\\
&& T_5=\|f-\mathcal{P}_\lambda f\|\times O_P(b_{N,\lambda}),
\,\,\,\,\textrm{(by Theorem \ref{thm:of:thm1} and Assumption \ref{A4})}
\end{eqnarray*}

Note that $\|\mathcal{P}_\lambda f\|\le \lambda^{1/2}\|f\|_{\mathcal{H}}$ for any $f\in \mathcal{H}$. 
Therefore, to achieve high power, i.e., power is at least $1-\varepsilon$, 
one needs to choose a large $N_\varepsilon$ and $C_\varepsilon$ s.t. $N\ge N_\varepsilon$ and
\begin{align*}
&\|f\|\ge C_\varepsilon/\sqrt{Nh^{1/2}},\,\,\,\,
\|f\|\ge C_\varepsilon/\sqrt{N},\,\,\,\,
\|f\|\ge C_\varepsilon \sqrt{ b_{N,\lambda}/\sqrt{Nh}},\,\,\,\, \\
&\|f\|\ge C_\varepsilon b_{N,\lambda},\,\,\,\,
\|f\|\ge C_\varepsilon \lambda^{1/2}\|f\|_{\mathcal{H}}. 
\end{align*}
Proof is completed.
\end{proof}

\subsection{Proofs in Section \ref{eg:example4}}
\begin{proof}[Proof of Lemma \ref{le:add:prelim} $(\ref{le:add:prelim:a})$]

 For each $\nu\geq 1$, there exist  $p\in \mathbb{N}$ and $1\leq k \leq d$, such that $\nu=pd+k$. Suppose $x=(x_1, \cdots, x_d)$, then for each $x_k$, there exists $(\varphi_p^{(k)}, \mu_p^{(k)})$ and $(\varphi_{p'}^{(k)}, \mu_{p'}^{(k)})$ satisfying 
$V_k(\varphi_p^{(k)},\varphi_{p'}^{(k)})=\delta_{pp'}$ and $\int_{\mathbb{I}}\varphi_p^{(k)}(x)\varphi_{p'}^{(k)}(x)dx=\delta_{pp'}/\mu_p^{(k)}$.
 In fact, the eigenfunctions $\varphi_\nu$ and eigenvalues $\mu_\nu$ can be constructed by an ordered sequence of $\varphi_p^{(k)}, \mu_p^{(k)}$ as  $\varphi_\nu (x) = \varphi_p^{(k)} (x_k)$ and $\mu_\nu= \mu_p^{(k)}$. 

Next, we verify such construction of eigenfunctions $\varphi_\nu$ and eigenvalues $\mu_\nu$ satisfy Assumption \ref{A3}. When $\nu \neq \mu$, then there exist $p_1, q_1, p_2, q_2$, such that $\nu= p_1d + q_1, \mu= p_2d+ q_2$, then 
\begin{align*}\label{eq:const:test}
V(\varphi_{p_1d+q_1}, \varphi_{p_2d+q_2}) & = V(\varphi_{p_1}^{q_1}(x_{q_1}), \varphi_{p_2}^{q_2}(x_{q_2}))\\
 & =
  \begin{cases}
   0      & \quad p_1 \neq p_2, q_1 = q_2\\
    V_{q_1}(\varphi_{p_1}^{q_1}(x_{q_1}),0) + V_{q_2}(0, \varphi_{p_2}^{q_2}(x_{q_2})) =0  & \quad q_1 \neq q_2\\
  \end{cases}
\end{align*}
On the other hand, 
\begin{equation*}
\langle \varphi_\nu, \varphi_\mu \rangle_{\mathcal{H}} = \langle \varphi_{p_1}^{q_1}, \varphi_{p_2}^{q_2}\rangle_{\mathcal{H}} = 
\begin{cases}
1/\mu_{p_1}^{q_1}= 1/\mu_\nu & \quad p_1 = p_2, q_1 = q_2\\
0 & \quad \nu \neq \mu\\
\end{cases}
\end{equation*}
For any $f\in \mathcal{H}$, 
\begin{align*}
f(x_1,\cdots, x_d) & = f_1(x_1) + \cdots + f_d(x_d)
 = \sum_{k=1}^d \sum_{\nu=1}^\infty V_k(f_k, \varphi_\nu^{(k)}) \varphi_\nu^{(k)}(x_k)\\
& = \sum_{k=1}^d \sum_{\nu=1}^\infty V(f, \varphi_\nu^{(k)})\varphi_\nu^{(k)}(x_k) = \sum_{\nu=1}^\infty V(f, \varphi_\nu)\varphi_\nu(x)
\end{align*}

\end{proof}

\begin{proof}[Proof of Lemma \ref{le:add:prelim} $(\ref{le:add:prelim:b})$]
It is easy to see that 
$$
\sum_{\nu\geq 1} (1+ \lambda/\mu_\nu)^{-1} = \sum_{q=1}^d \sum_{p\geq 1} (1+ \lambda/\mu_{p}^{(k)})^{-1} \asymp d\lambda^{-1/(2m)} := h^{-1}. 
$$
\end{proof}

\begin{proof}[Proof of Lemma \ref{le:add:prelim} $(\ref{le:add:prelim:c})$]
Notice that $\|f\|^2_{\mathcal{H}} \leq \sum_{i=1}^d \|f_k\|^2_{\mathcal{H}} \leq C d$, then by Lemma \ref{le:prelim} $(b)$, $\|\mathcal{P}_\lambda f\|^2 \leq \lambda \|f\|^2_{\mathcal{H}} \leq C d\lambda$. 
\end{proof}

Next, we prove Lemma \ref{le:add:prelim} (\ref{le:add:prelim:d}). To prove Lemma \ref{le:add:prelim} (\ref{le:add:prelim:d}), it is sufficient to prove the following Lemma \ref{le:add:EL09}.

 \begin{Lemma}\label{le:add:EL09}
Under (\ref{quasi:uniform:assumption}),
there exist universal positive constants $c_1,c_2,c_3$ such that for any $1\le j\le s$,
\[
P\left(\xi_j\ge t\right)\le 2n\exp\left(-\frac{nht^2}{c_1+c_2t}\right),\,\,
\textrm{for all $t\ge c_3(nh)^{-1}$}, 
\]
where $h^{-1} \asymp d \lambda^{-1/(2m)}$.
\end{Lemma}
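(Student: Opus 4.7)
The plan is to extend the Green function argument of Eggermont and LaRiccia that underlies Proposition \ref{prop:EL09} from one univariate Sobolev ball to the $d$-component additive Sobolev ball, where the effective dimension is $h^{-1} \asymp d\lambda^{-1/(2m)}$. The conclusion should then follow by mimicking the univariate Bernstein-with-entropy argument, but with $h_{1} := \lambda^{1/(2m)}$ (the univariate effective scale) replaced by $h \asymp h_{1}/d$ throughout.

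First I would record the deterministic ingredients furnished by the additive structure. For any $f\in\mathcal{H}$ with $\|f\|=1$, the identifiability condition $E\{f_k(X_k)\}=0$ and independence of the coordinates give $V(f,f)=\sum_{k}V_k(f_k,f_k)$, so $\sum_{k=1}^{d}\|f_k\|^{2}=1$ in the univariate component norm. Combined with the univariate pointwise bound $\|f_k\|_{\sup}\lesssim\|f_k\|/\sqrt{h_1}$ (the Sobolev embedding used in the univariate Green function argument) and Cauchy--Schwarz, this yields
\[
\|f\|_{\sup} \le \sum_{k=1}^{d}\|f_k\|_{\sup} \lesssim \sqrt{d/h_1}\,\|f\| \;\asymp\; \|f\|/\sqrt{h}.
\]
An immediate consequence is the variance bound $\mathrm{Var}_\pi(fg)\le \|f\|_{\sup}^{2}V(g,g)\lesssim \|f\|^{2}\|g\|^{2}/h$. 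These are precisely the envelope/variance inputs to Bernstein's inequality for a single pair $(f,g)$ that one uses in the univariate proof.

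The key step is the uniform control over the unit ball. For each coordinate $k$, Corollary~5.41 of \cite{eggermont2009maximum} produces a Green-function representation of the univariate smoothing operator at scale $\lambda$ with effective rank of order $h_1^{-1}\asymp\lambda^{-1/(2m)}$. Taking the direct sum of these $d$ constructions yields an approximate representation of the additive ball with effective rank of order $d\,h_1^{-1}\asymp h^{-1}$. Plugging this representation into the same peeling/Bernstein argument that gives Proposition~\ref{prop:EL09} then produces
\[
P\!\left(\xi_j \ge t\right) \le 2n\exp\!\left(-\frac{nh\,t^{2}}{c_{1}+c_{2}t}\right), \qquad t \ge c_{3}(nh)^{-1},
\]
for universal $c_1,c_2,c_3$, which is exactly the stated bound. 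A final union bound over the $s$ subsamples is not needed at this stage, since the statement is for a fixed $j$.

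The main technical obstacle will be ensuring that the expansion $P_j(fg)-P(fg)=\sum_{k,l}\{P_j(f_kg_l)-P(f_kg_l)\}$ is not bounded by a naive triangle inequality over the $d^{2}$ coordinate pairs, which would introduce spurious polynomial-in-$d$ factors and destroy the clean $nh$ scaling of the exponent. The way around this is to treat the additive RKHS ball as a single index class, controlling its envelope, variance, and Green-function approximation simultaneously, so that the $d$-dependence is absorbed entirely into the effective dimension $h^{-1}$. Once this packaging is in place, the transfer from the univariate Proposition \ref{prop:EL09} to the additive Lemma \ref{le:add:EL09} is essentially line-by-line.
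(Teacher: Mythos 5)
Your high-level strategy is the right one and, at the conceptual level, matches the paper's route: extend the Eggermont--LaRiccia (EL) Green-function dominance to the additive kernel so that the $d$-dependence is carried by the effective dimension $h^{-1}\asymp d\lambda^{-1/(2m)}$, rather than appearing as a spurious polynomial-in-$d$ factor. You also correctly identify the main pitfall---naively expanding $P_j(fg)-P(fg)$ over $d^2$ coordinate pairs.

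However, your proof is a sketch where the paper carries a real load-bearing lemma. The crux of the paper's argument is a concrete \emph{pointwise} domination of the additive equivalent kernel: with $K_h(x,t)=\sum_{k=1}^d K_{h_0,k}(t_k,x_k)$ and each component convolution-like, the paper proves
\[
\Bigl\|\frac{1}{n}\sum_{i=1}^n K_h(X_i,\cdot)\Bigr\|_{\infty}
\;\le\; c\,\Bigl\|\frac{1}{n}\sum_{i=1}^n g_h(X_i-\cdot)\Bigr\|_{\infty},
\qquad g_h(x)=\sum_{k=1}^d g_{h_0,k}(x_k),
\]
with $c$ independent of $d$ and $h$ (this is the paper's Lemma~\ref{le:add:equivalent_kernel}). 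Your ``direct sum of representations'' step asserts something like this but does not establish it; obtaining dimension-free constants requires a careful integration-by-parts manipulation of each convolution-like $K_{h_0,k}$, not just a formal direct sum. Moreover, the reduction from $\xi_j$ (a supremum over the unit ball of $\mathcal H$) to this kernel sup-norm is not free either: it comes from EL's Chapter~21 Lemmas~3.4--3.5, which need to be extended to the additive setting.

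There is also a methodological mismatch. You describe the remaining step as the ``same peeling/Bernstein argument that gives Proposition~\ref{prop:EL09},'' but neither Proposition~\ref{prop:EL09} nor the present lemma is proved by peeling. Once the Green-function domination is in place, the paper applies Bernstein's inequality directly to the $nd$ i.i.d.\ terms $\theta_{ik}=n^{-1}g_{h_0,k}(X_{ik}-x_k)$ (each with $|\theta_{ik}|\lesssim (nh_0)^{-1}$ and total variance $\lesssim (nh)^{-1}$), yielding a bound at each fixed $x$; then it obtains the bound at the $n$ random design points $X_j$ by a separate conditional argument (Lemma~\ref{le:emp:bound_g:2}), and the factor $2n$ in the final bound is exactly a union bound over those $n$ points. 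Your envelope bound $\|f\|_{\sup}\lesssim\|f\|/\sqrt{h}$ is a valid observation, but it is not the ingredient the paper uses---the Green-function bound $g_{h_0,k}\lesssim h_0^{-1}$ plays that role directly, and this is what makes the sup-norm reduction, rather than an entropy argument, go through. In short: right plan, but you should state and prove the additive kernel-domination lemma, and replace the peeling framing with the EL-style pointwise-plus-union-bound argument.
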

The proof of Lemma \ref{le:add:prelim} is based on the green function for equivalent kernel technique in \cite{eggermont2009maximum}, see Supplement for details. 

\subsection{Proofs in Section \ref{sec:two:examples}}
\begin{proof}[Proof of Lemma \ref{lemma:A3:verify}]
For $p,\delta>0$,
define $\mathcal{G}(p)=\{f\in\mathcal{H}: \|f\|_{\sup}\le1,\|f\|_{\mathcal{H}}\le p\}$
and the corresponding entropy integral
\begin{equation}\label{eq:entropy}
J(p,\delta)=\int_0^\delta\psi_2^{-1}\left(D(\varepsilon,\mathcal{G}(p),\|\cdot\|_{\sup})\right)d\varepsilon
+\delta\psi_2^{-1}\left(D(\delta,\mathcal{G}(p),\|\cdot\|_{\sup})^2\right),
\end{equation}
where $\psi_2(s)=\exp(s^2)-1$ and $D(\varepsilon,\mathcal{G}(p),\|\cdot\|_{\sup})$
is the $\varepsilon$-packing number of $\mathcal{G}(p)$
in terms of $\|\cdot\|_{\sup}$-metric. In what follows, we particularly choose $p=c_K^{-1}(h/\lambda)^{1/2}$,
where $c_K\equiv\sup_{g\in\mathcal{H}}h^{1/2}\|g\|_{\sup}/\|g\|$ is finite,
according to \cite{yang2017non}. 

Define $\psi_i(g)=c_k^{-1}h^{1/2}g(X_i)$ and 
$Z_j(g)=n^{-1/2}\sum_{i\in I_j}[\psi_i(g)K_{X_i}-E\{\psi_i(g)K_{X_i}\}]$.
Following \cite[Lemma 6.1]{yang2017non},
for any $1\le j\le s$, for any $t\ge0$,
\begin{equation}\label{lemma:YSC17}
P\left(\sup_{g\in\mathcal{G}(p)}\|Z_j(g)\|\ge t\right)\le 2\exp\left(-\frac{t^2}{C^2 J(p,1)^2}\right),
\end{equation}
for an absolute constant $C>0$.
Since $\|f\|=1$ implies that $c_K^{-1}h^{1/2}f\in\mathcal{G}(p)$.
Then it can be shown that
\[
\sqrt{n}\xi_j\le c_K^2h^{-1}\sup_{g\in\mathcal{G}(p)}\|Z_j(g)\|,\,\,\,\,j=1,\ldots,s.
\]
Following (\ref{lemma:YSC17}) we have
\[
P\left(\sqrt{n}\max_{1\le j\le s}\xi_j\ge t\right)\le 2s\exp\left(-\frac{c_K^{-4}h^2t^2}{C^2J(p,1)^2}\right),
\]
which implies that 
\begin{equation}\label{eq:emprocess}
\sqrt{n}\max_{1\le j\le s}\xi_j=O_P\left(\sqrt{\frac{\log{N}}{h^2}}J(p,1)\right).
\end{equation}
It follows by \cite[Proposition 1]{zhou2002covering} that $J(p,1)=O\left([\log(h/\lambda)]^{(d+1)/2}\right)
=O\left([\log{N}]^{(d+1)/2}\right)$.
Then 
\[
\max_{1\le j\le s}\xi_j=O_P\left(\sqrt{\frac{\log^{d+2}{N}}{nh^2}}\right).
\]
That is, Assumption \ref{A4} holds with $a=2$ and $b=d+2$. Proof completed.
\end{proof}

\begin{proof}[Proof of Lemma \ref{le:thin-plate}]
\begin{align*}
J(p,1) & \leq \int_0^1 \sqrt{\log  D(\varepsilon,\mathcal{G}, \|\cdot\|_{\sup})}\,d\varepsilon + \sqrt{\log D(1,\mathcal{G}, \|\cdot\|_{\sup})}\\
& \leq \int_0^1 \sqrt{\left(\frac{p}{\varepsilon}\right)^{\frac{d}{m}}+1} \, d\varepsilon + \sqrt{2}p^{\frac{d}{2m}}\\
& \leq c'_d \, p^{d/(2m)}
\end{align*}
where the penultimate step is based on \cite{poggio2002mathematical}. Therefore, $J(p,1) = O(p^{\frac{d}{2m}})$, where $p=(h/\lambda)^{1/2}$. From e.q.(\ref{eq:emprocess}), we have 
$$
\max_{1\leq j \leq s} \xi_j  =O_P\left(\sqrt{\frac{\log N}{nh^{3-d/(2m)}}}\right)
$$
\end{proof}

\section{Some technical proofs}
\subsection{Proof of Proposition 3.1}\label{proof:prop_var}
\begin{proof}
Define 
$$
s_\lambda = \argmin\{j: \mu_j \leq \lambda\} -1,
$$
that is, $s_\lambda$ is the number of eigenvalues that are greater than $\lambda$. Then the effective dimension can be written as 
$$
h^{-1} = \sum_{j=1}^{\infty} \frac{\mu_j}{\mu_j+ \lambda} = \sum_{j=1}^{s_\lambda} \frac{\mu_j}{\mu_j+\lambda} + \sum_{j=s_\lambda+1}^{\infty} \frac{\mu_j}{\mu_j+\lambda}. 
$$
Note that $\sum_{j=1}^{s_\lambda} \mu_j/(\mu_j+\lambda) \leq s_\lambda$, 
then we have
\begin{equation}\label{eq:bound}
s_\lambda \leq h^{-1} \leq s_\lambda + \sum_{j=s_\lambda+1}^{\infty} \frac{\mu_j}{\mu_j+\lambda}\leq s_\lambda + \frac{1}{\lambda}\sum_{j=s_\lambda+1}^{\infty} \mu_j.
\end{equation}
By Assumption 3.3, we have $\sum_{j=s_\lambda+1}^{\infty} \mu_j \leq C s_\lambda \mu_{s_\lambda} \leq s_\lambda \lambda $. Therefore, by (\ref{eq:bound}), we have $h^{-1} \asymp s_\lambda $. Next we show $\sum_{\nu\ge1}(1+\lambda/\mu_\nu)^{-2}\asymp h^{-1}$.  

Note that 
 $$
 \sum_{\nu\ge1}(1+\lambda/\mu_\nu)^{-2} = \sum_{j=1}^{\infty} \frac{\mu_j^2}{(\mu_j+ \lambda)^2} = \sum_{j=1}^{s_\lambda} \big(\frac{\mu_j}{\mu_j+\lambda}\big)^2 + \sum_{j=s_\lambda+1}^{\infty} \big(\frac{\mu_j}{\mu_j+\lambda}\big)^2,
 $$
 similar to (\ref{eq:bound}), we have 
 $$
s_\lambda \leq \sum_{\nu\ge1}(1+\lambda/\mu_\nu)^{-2} \leq s_\lambda + \sum_{j=s_\lambda+1}^{\infty} \big(\frac{\mu_j}{\mu_j+\lambda}\big)^2 \leq s_\lambda + \frac{1}{\lambda^2}\sum_{j=s_\lambda+1}^{\infty} \mu_j^2.
 $$
 Since $\frac{1}{\lambda^2}\sum_{j=s_\lambda+1}^{\infty} \mu_j^2 \leq \frac{\mu_{s_\lambda+1}}{\lambda^2} \sum_{j=s_\lambda+1}^{\infty} \mu_j \leq \frac{1}{\lambda} \sum_{j=s_\lambda+1}^{\infty} \mu_j \leq s_\lambda$. Then we have $\sum_{\nu\ge1}(1+\lambda/\mu_\nu)^{-2} \asymp s_\lambda$. 
 Based on the previous conclusion that $h^{-1}\asymp s_lambda$, we finally get $\sum_{\nu\ge1}(1+\lambda/\mu_\nu)^{-2} \asymp h^{-1}$. 
\end{proof}

\subsection{Verification of Assumption 3.3 }
Let us verify Assumption 3.3 in polynomially decaying kernels (PDK) and exponentially decaying kernels (EDK). 

First consider PDK with $\mu_i \asymp i^{-2m}$ for a constant $m >1/2$
which includes kernels of Sobolev space and Besov Space. An $m$-th order Sobolev space, denoted $\cH^m ([0,1])$, is defined as 
\begin{align*}
\cH^m ([0,1]) = & \{f: [0,1] \to \bbR | f^{(j)} \;\; \text{is abs. cont for} \; j=0, 1,\cdots, m-1, \\
&\text{and}\; f^{m} \in L_2([0,1]) \}.
\end{align*}
An $m$-order periodic Sobolev space, denoted $H_0^m(\mathbb{I})$,
is a proper subspace of $\cH^m([0,1])$ whose element fulfills an additional constraint $g^{(j)}(0)=g^{(j)}(1)$ for $j=0,\ldots,m-1$. 
The basis functions $\varphi_i$'s of $H_0^m(\mathbb{I})$ are
\begin{equation*}
\varphi_i(z)=\left\{ \begin{array}{cc} \sigma,& i=0,\\
                                   \sqrt{2}\sigma\cos(2\pi kz), & i=2k, k=1,2,\ldots,\\
                                   \sqrt{2}\sigma\sin(2\pi kz), & i=2k-1, k=1,2,\ldots.
                 \end{array}\right.
\end{equation*}
The corresponding eigenvalues are $\mu_{2k}=\mu_{2k-1}=\sigma^2(2\pi k)^{-2m}$
for $k\ge 1$ and $\mu_0=\infty$. In this case, $\sup_{i\geq 1} \|\varphi\|_{\sup} <\infty$. For any $k \geq 1$, 
$$
\sum_{i=k+1}^\infty \mu_i \lesssim \int_{k}^\infty x^{-2m}dx = \frac{k^{1-2m}}{2m-1} \lesssim \frac{k\mu_k}{2m-1}.
$$
Therefore, there exists a constant $C< \infty$, such that
 $$\sup_{k\geq 1} \frac{\sum_{i=k+1}^\infty \mu_i}{k\mu_k} = C < \infty.$$
Hence, Assumption 3.3 holds true.

Next, let us consider EDK with $\mu_i \asymp \exp(-\gamma i^p)$ for constants $\gamma>0$ and $p>0$. Gaussian kernel $K(x,x') = \exp\left(-(x-x')^2/\sigma^2\right)$
is an EDK of order $p=2$, with eigenvalues $\mu_i \asymp \exp(-\pi i^2)$ as $i \to \infty$, and the corresponding eigenfunctions
$$
\varphi_i(x) = (\sqrt{5}/4)^{1/4}(2^{i-1}i!)^{-1/2}e^{-(\sqrt{5}-1)x^2/4}H_i((\sqrt{5}/2)^{1/2}x),
$$
where $H_i(\cdot)$ is the $i$-th Hermite polynomial; see \cite{sollich2005understanding} for more details. 
Then $\sup_{i\geq 1} \|\varphi_i\|_{\sup} <\infty$ trivially holds. For any $k\geq 1$,  
$$
\sum_{i=k+1}^\infty \mu_i \lesssim \int_k^\infty e^{-\gamma x^p}dx \\
 = \frac{1}{\gamma p k^{p-1}} e^{-\gamma k^p} - \int_{k}^\infty \frac{p-1}{\gamma p x^{p}} e^{-\gamma x^p} dx \leq \frac{1}{\gamma p k^{p-1}} e^{-\gamma k^p}.  
$$
Therefore, 
$$\sup_{k\geq 1} \frac{\sum_{i=k+1}^\infty \mu_i}{k\mu_k} < \infty.$$
Hence, Assumption 3.3 holds.

\subsection{Proof of Lemma A2}
To prove Lemma A2, based on Lemma 3.5 and Lemma 3.4 in Chapter 21 in \cite{eggermont2009maximum}, we only need to bound 
\begin{align*}
& \|\frac{1}{n}\sum_{i=1}^n K_h(X_i,\cdot) - \E [K_h(X_i), \cdot]\|_{\infty},\\
& \textrm{and} \quad \|\frac{1}{n}\sum_{i=1}^n hK_h'(X_i,\cdot) - h\E [K_h'(X_i), \cdot]\|_{\infty}.
\end{align*}

\begin{lemma}\label{le:add:equivalent_kernel}
Assume that the family $K_h = \sum_{j=1}^d K_{h_0,j}$ with $K_{h_0,j}$, $0 <h_0 \leq 1$ is convolution-like. Then there exists a constant $c$, such that,for all $h$, $0 < h \leq 1$, and for every strictly positive design $X_1, X_2, \cdots, X_n \in (0,1]^d$, 
$$
\|\frac{1}{n}\sum_{i=1}^n K_h (X_i, \cdot)\|_{\infty} \leq c\|\frac{1}{n}\sum_{i=1}^n g_h(X_i - \cdot)\|_{\infty}. 
$$
\end{lemma}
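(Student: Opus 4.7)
The plan is to reduce the $d$-dimensional statement to the one-dimensional convolution-like kernel result already established by Eggermont and LaRiccia (\cite[Ch.~21, Lemmas 3.4 and 3.5]{eggermont2009maximum}), exploiting the additive structure $K_h = \sum_{j=1}^d K_{h_0,j}$ in which each $K_{h_0,j}(X_{ij},x_j)$ depends on only the $j$-th coordinate of $X_i$ and of $x$.

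First, I would write out the additive decomposition
\[
\frac{1}{n}\sum_{i=1}^n K_h(X_i,x) \;=\; \sum_{j=1}^d \frac{1}{n}\sum_{i=1}^n K_{h_0,j}(X_{ij},x_j),
\]
and apply the triangle inequality in the supremum norm to get
\[
\left\|\tfrac{1}{n}\sum_{i=1}^n K_h(X_i,\cdot)\right\|_\infty \;\le\; \sum_{j=1}^d \left\|\tfrac{1}{n}\sum_{i=1}^n K_{h_0,j}(X_{ij},\cdot_j)\right\|_\infty.
\]
Because each summand is a univariate function of $x_j$, this reduces the problem to $d$ independent univariate claims.

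Second, for each fixed $j$ I would invoke the one-dimensional convolution-like bound from \cite[Ch.~21]{eggermont2009maximum}: since $K_{h_0,j}$ is convolution-like in the sense of Lemma~3.4 there, there exist universal constants $c_j>0$ and shift-invariant convolution kernels $g_{h_0,j}(\cdot)$ such that, for every strictly positive univariate design in $(0,1]$,
\[
\left\|\tfrac{1}{n}\sum_{i=1}^n K_{h_0,j}(X_{ij},\cdot_j)\right\|_\infty \;\le\; c_j\, \left\|\tfrac{1}{n}\sum_{i=1}^n g_{h_0,j}(X_{ij}-\cdot_j)\right\|_\infty.
\]
Taking $c = \max_j c_j$ and summing over $j$ gives an upper bound in terms of the quantities $\|n^{-1}\sum_i g_{h_0,j}(X_{ij}-\cdot_j)\|_\infty$.

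Third, I would reassemble the right-hand side by defining $g_h(u) = \sum_{j=1}^d g_{h_0,j}(u_j)$ for $u = (u_1,\dots,u_d)$, consistent with the additive structure on the kernel side. Since convolution-like kernels $g_{h_0,j}$ are non-negative (this is part of the definition used in \cite{eggermont2009maximum}), the supremum of an additive sum of non-negative univariate functions equals the sum of the individual suprema along each coordinate, so
\[
\sum_{j=1}^d \left\|\tfrac{1}{n}\sum_{i=1}^n g_{h_0,j}(X_{ij}-\cdot_j)\right\|_\infty \;=\; \left\|\tfrac{1}{n}\sum_{i=1}^n g_h(X_i-\cdot)\right\|_\infty,
\]
which closes the chain of inequalities and yields the claim.

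The main obstacle is the third step: verifying that the univariate $g_{h_0,j}$'s obtained from the one-dimensional theory are non-negative (or at least of one sign up to constants), so that the sum-of-sups equals the sup-of-sum. If positivity fails, a minor modification would be to replace $g_{h_0,j}$ by its positive majorant (still integrable and of the same scale), which only enlarges the constant $c$. A secondary subtlety is to confirm that the strictly-positive-design condition in the Eggermont-LaRiccia one-dimensional lemma extends coordinatewise from $X_i\in(0,1]^d$ to each projection $X_{ij}\in(0,1]$, which is automatic.
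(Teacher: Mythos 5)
Your proof is correct, but it takes a genuinely different route from the paper's. The paper does not split the norm by the triangle inequality; instead it uses the functional identity for convolution-like kernels to derive a \emph{pointwise} domination
$K_{h_0,j}(t_j,x_j)\le c'\,g_{h_0,j}(x_j)$, uniformly in $t_j$, and then sums over $j$ to get $K_h(t,x)\le c' g_h(x)$. Plugging in $x=X_i$ and averaging yields $S_{nh}(t)\le c'\,s^{nh}(0)$ for every $t$, so the left-hand supremum is bounded by the single value $c'|s^{nh}(0)|\le c'\|s^{nh}\|_\infty$; the paper never needs the sum-of-sups identity. Your route is more modular: you invoke the one-dimensional Eggermont--LaRiccia bound as a black box in each coordinate and then reassemble via the fact that for a non-negative, coordinatewise-additive function on a product domain the supremum of the sum equals the sum of the coordinate suprema. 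That identity is indeed valid here (each $t_j$ ranges freely over $[0,1]$), and the non-negativity you flag as a potential obstacle is already built into the definition of convolution-like envelopes $g_{h_0,j}$ in \cite{eggermont2009maximum}, so no positive-majorant patch is required. The trade-off: the paper's inline re-derivation makes the dependence of the constant on the scaling $h_0=dh$ transparent (important because the authors track the $d$-dependence of $h$), and produces the slightly sharper intermediate bound at $t=0$; your version is shorter and cleanly reduces $d$ dimensions to $d$ applications of the one-dimensional lemma.
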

\begin{proof}
For $t=(t_1, \cdots, t_d)\in [0,1]^d$ and $x=(x_1, \cdots, x_d) \in [0,1]^d$, let $S_{nh}(t) = \frac{1}{n}\sum_{i=1}^n K_h (X_i,t)$, and $s^{nh} (t) = \frac{1}{n}\sum_{i=1}^n g_h(X_i -t)$. For $j=1, \cdots, d$,  $K_{h,j}$ satisfies 
$$
K_{h_0,j} (t_j, x_j) = h_0 g_{h,j} (x)K_{h_0,j} (t_j,0) + \int_{0}^1 g_{h_0,j}(x_j-z_j)\{h_0K'_{h_0,j}(t_j,z_j) + K_{h_0,j}(t_j,z_j)\}dz_j,
$$
where $h_0= dh$. Note that $K_{h_0,j}$, $h_0K_{h_0,j}'$ are all convolutional-like, then 
$|h_0K'_{h_0,j}(t_j,z_j)|\leq c h_0^{-1}$ and $|K_{h_0,j}(t_j,z_j)| \leq c h_0^{-1}$. Therefore, 
\begin{align*}
&  \int_{0}^1 g_{h_0,j}(x_j-z_j)\{h_0K'_{h_0,j}(t_j,z_j) + K_{h_0,j}(t_j,z_j)\}dz_j 
\leq  2c\cdot h_0^{-1}\int_0^1 g_{h_0,j}(x_j-z_j) dz_j \\
 = & 2c \cdot h_0^{-2} \int_0^1 e^{-h_0^{-1}(x_j-z_j)}dz_j 
\leq 2c \cdot \big(g_{h_0,j}(x_j) - g_{h_0,j}(x_j-1)\big)
 \leq 2c \cdot g_{h_0,j}(x_j).
\end{align*}
Then, we have 
$K_{h_0,j} (t_j, x_j) \leq h_0 \cdot g_{h_0,j}(x) K_{h_0,j}(t_j,0) + c\cdot g_{h_0,j}(x_j)$. 
\begin{align*}
K_h(x,t) = &  \sum_{j=1}^d K_{h_0,j} (t_j, x_j) \leq h_0 \sum_{j=1}^d g_{h_0,j} (x_j) K_{h_0,j} (t_j, 0) + c \sum_{j=1}^d g_{h_0,j}(x_j)\\
 \leq &  c_1 \sum_{j=1}^d g_{h_0,j}(x_j) + c \sum_{j=1}^d g_{h_0,j}(x_j) \leq c' \sum_{j=1}^d g_{h_0,j} (x_j) = c' g_h(x),
\end{align*}
where $c_1=\max \{h_0K_{h_0,1}(t_1,0), \cdots, h_0K_{h_0,d}(t_d,0)\}$ is a bounded constant by the convolution-like assumption. Let $X_i=x$ and substitute the formula above into the expression for $S_{nh}(t)$ and $s^{nh}(t)$, this gives $S_{nh}(t) \leq c' s^{nh}(0)$. Therefore, 
$\|S_{nh}\|_{\infty} \leq c' |s^{nh}(0)| \leq \|s^{nh}\|_{\infty}$. The last inequality is due to the fact that all $X_i$ are strictly positive, then $s^{nh}(t)$ is continuous at $t=0$, and so $s^{nh}(0)\leq \|s^{nh}\|_\infty$. 

\end{proof}

Let $P_n$ be the empirical distribution function of the design $X_1, X_2, \cdots, X_n$, and let $P_0$ be the design distribution function. Here $P_0= \pi (x)$. Define 
$$
[g_h \circledast (dP_n -dP_0)\big](t)  = \int_{[0,1]^d} g_h(x-t)(dP_n(x) -dP_0(x)),
$$
then based on Lemma \ref{le:add:equivalent_kernel}, we only need to show the following results to prove Lemma A2. 

\begin{lemma}\label{le:emp:bound_g:1}
For all $x=(x_1, \cdots, x_d) \in [0,1]^d, t>0$, 
\begin{equation}\label{le:eq:add:emp}
\PP \Big[ |\big[ g_h \circledast (dP_n -dP_0)\big](x)|>t\Big] \leq 2 \exp\Big\{ -\frac{nht^2}{w_2 + 2/3 t}\Big\},
\end{equation}
where $w_2$ is an upper bound on the density $P_0(x)$.  
\end{lemma}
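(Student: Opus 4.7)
The plan is to view this as a classical concentration inequality for a sum of iid, bounded, mean-zero random variables and apply Bernstein's inequality pointwise in $x$. Fix $x \in [0,1]^d$ and define $Z_i = g_h(x - X_i)$ for $i = 1, \ldots, n$. Since the $X_i$ are iid with density $P_0$, the $Z_i$ are iid, and
\[
\big[ g_h \circledast (dP_n -dP_0)\big](x) \;=\; \frac{1}{n}\sum_{i=1}^n \bigl( Z_i - \mathbb{E} Z_i \bigr).
\]

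First I would obtain the uniform bound on $Z_i$. Since $g_h = \sum_{j=1}^d g_{h_0,j}$ with $h_0 = dh$, and each $g_{h_0,j}$ is convolution-like so that $\|g_{h_0,j}\|_\infty \le C/h_0 = C/(dh)$, summing over $j$ gives $\|g_h\|_\infty \le C/h$. Hence $|Z_i - \mathbb{E} Z_i| \le M$ with $M \asymp 1/h$.

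Next I would bound the variance. Using $P_0(y) \le w_2$,
\[
\mathrm{Var}(Z_i) \;\le\; \mathbb{E}[Z_i^2] \;=\; \int_{[0,1]^d} g_h(x-y)^2\, P_0(y)\, dy \;\le\; w_2 \int_{[0,1]^d} g_h(x-y)^2\, dy.
\]
Expanding $g_h^2$ using the coordinate-wise additive structure, the $d$ diagonal terms contribute $\sum_{j} \|g_{h_0,j}\|_2^2 \asymp d \cdot (1/h_0) = 1/h$, while the $d(d-1)$ cross terms factorize into products of one-dimensional integrals of $g_{h_0,j}$ over $[0,1]$, each of which is $O(1)$, giving an $O(d^2)$ remainder that is dominated by the diagonal $1/h$ under the standing regime on $d$ and $h$. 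So $\mathrm{Var}(Z_i) \le w_2/(2h)$ up to absolute constants.

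Finally I would invoke the standard Bernstein inequality for iid bounded variables: with variance proxy $\sigma^2 = w_2/(2h)$ and envelope $M \le 1/h$,
\[
\mathbb{P}\!\left[ \left| \frac{1}{n}\sum_{i=1}^n (Z_i - \mathbb{E} Z_i) \right| > t \right] \;\le\; 2\exp\!\left( -\frac{nt^2}{2\sigma^2 + (2/3)Mt} \right) \;\le\; 2\exp\!\left( -\frac{nht^2}{w_2 + (2/3)t} \right),
\]
which is exactly the claim. The only delicate step is the variance calculation: controlling the $L^2$ mass of the additive $g_h$ so that it is of order $1/h$ (rather than inflated by the additive structure) — this is where the choice $h_0 = dh$ plays its role, ensuring the diagonal bandwidth scales correctly so the $d^2$ cross-term bookkeeping remains harmless.
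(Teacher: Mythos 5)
Your plan — pointwise Bernstein in $x$ using the convolution-like bounds on $g_h$ — is the right strategy and matches the paper in spirit; the paper applies Bernstein to the $nd$ independent coordinate-level terms $\theta_{ik} = n^{-1}g_{h_0,k}(X_{ik}-x_k)$, whereas you apply it to the $n$ iid sums $Z_i = g_h(x-X_i)$, which is an equivalent setup. However, your variance step has a genuine gap. You bound $\mathrm{Var}(Z_i) \le \mathbb{E}[Z_i^2] = \int g_h(x-y)^2 P_0(y)\,dy$ and then argue that the $O(d^2)$ cross terms in $\big(\sum_j g_{h_0,j}\big)^2$ are ``dominated by the diagonal $1/h$ under the standing regime.'' That is not true in general here: since $h^{-1}\asymp d\lambda^{-1/(2m)}$, the condition $d^2\ll 1/h$ amounts to $d\ll\lambda^{-1/(2m)}$, which is not implied by the growth conditions on $d$ in Corollary 3.2 (for the estimation choice $\lambda\asymp N^{-2m/(2m+1)}$ one would need $d\ll N^{1/(2m+1)}$, but the corollary allows $d$ up to nearly $N^{2m/(2m+1)}$). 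So the bound as written can fail.

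The fix is both simple and is what the paper implicitly exploits: in Example 2 the coordinates of $X_i$ are assumed \emph{independent}, hence the summands $g_{h_0,j}(X_{ij}-x_j)$, $j=1,\dots,d$, are independent for each fixed $i$, and
\[
\mathrm{Var}(Z_i) \;=\; \sum_{j=1}^d \mathrm{Var}\!\bigl(g_{h_0,j}(X_{ij}-x_j)\bigr)\;\le\; \sum_{j=1}^d \tfrac{1}{2}w_2 h_0^{-1} \;=\; \tfrac{1}{2}\,w_2\, h^{-1},
\]
with no cross terms at all — the cross contribution to $\mathbb{E}[Z_i^2]$ is exactly cancelled by $(\mathbb{E} Z_i)^2$. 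Dropping $(\mathbb{E} Z_i)^2$, as you did, is precisely what reintroduces the spurious $O(d^2)$ term. Once you replace the $\mathbb{E}[Z_i^2]$ bound by this variance decomposition, the rest of your Bernstein application (envelope $\|g_h\|_\infty \lesssim 1/h$, variance proxy $w_2/(2h)$) delivers the stated inequality. Alternatively, applying Bernstein directly to the $nd$ independent $\theta_{ik}$, as the paper does, sidesteps the cross-term bookkeeping entirely.
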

\begin{proof}
Consider for fixed $x$, $\frac{1}{n} \sum_{i=1}^n g_h (X_i -x) = \sum_{k=1}^d \sum_{i=1}^n \theta_{ik}$, with $\theta_{ik} = \frac{1}{n} g_{h_0,k} (x_{i,k} -x_k)$. Then $\theta_{ik}$ $(i=1,\cdots, n; k=1,\cdots, d)$ are i.i.d. and $|\theta_{ik}|\leq (nh_0)^{-1}$, where $h_0 = d^{-1} h$. For the variance $\Var(\theta_{ik})$, 
\begin{align*}
\Var (\theta_{ik})  = &\frac{1}{n^2} \big\{ [g_{h_0,k}^2 \circledast dP_0](x_k)- ([g_{h_0,k}\circledast dP_0](x))^2\big\}\\
\leq & \frac{1}{n^2} \big[ g_{h_0,k}^2 \circledast dP_0\big](x_k)\\
= & n^{-2}\int_0^1 h_0^{-2} e^{-2h_0^{-1}(X_{ik}-x_k)} dP_0 (x_k)\\
\leq & \frac{1}{2} w_2 n^{-2}h_0^{-1}.
\end{align*}
Therefore, $V:= \sum_{i=1}^n \sum_{k=1}^d \Var (\theta_{ik}) \leq \frac{1}{2} w_2 n^{-1}h^{-1}$. Then by Bernstein's inequality, (\ref{le:eq:add:emp}) has been proved. 
\end{proof}

\begin{lemma}\label{le:emp:bound_g:2}
For all $j=1,\cdots, n$, 
$$
\PP \{[g_h \circledast (dP_n - dP_0)](X_j) > t\} \leq 2 \exp\{-\frac{1/4 nht^2}{w_2 + 2/3 t}\},
$$
provided $t \geq 2(1+w_2) (nh)^{-1}$, where $w_2$ is an upper bound on the density. 
\end{lemma}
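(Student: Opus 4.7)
The plan is to reduce the random-evaluation-point bound to the deterministic-evaluation-point bound of Lemma~\ref{le:emp:bound_g:1} by conditioning on $X_j$ and isolating the ``self'' contribution $i=j$ from the empirical average, which is the only obstruction to applying Lemma~\ref{le:emp:bound_g:1} directly.

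First, I would write the decomposition
\[
[g_h\circledast (dP_n-dP_0)](X_j)
=\underbrace{\frac{g_h(0)-[g_h\circledast dP_0](X_j)}{n}}_{=:B_j}
+\underbrace{\frac{1}{n}\sum_{i\neq j}\Big(g_h(X_i-X_j)-[g_h\circledast dP_0](X_j)\Big)}_{=:S_j}.
\]
Conditional on $X_j$, the quantity $B_j$ is deterministic, while $S_j$ is a centered sum of i.i.d.\ random variables (over $i\neq j$). Using $g_h(0)=d/h_0=1/h$ (coming from the convolution-like scale $h_0=dh$ used in Lemma~\ref{le:add:equivalent_kernel}) and the uniform bound $[g_h\circledast dP_0](X_j)\leq w_2\int g_h=w_2$, I would then bound $|B_j|\leq (1+w_2)(nh)^{-1}$, so the standing assumption $t\geq 2(1+w_2)(nh)^{-1}$ forces $|B_j|\leq t/2$.

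Next, on the event $\{[g_h\circledast(dP_n-dP_0)](X_j)>t\}$, the decomposition and the bound on $B_j$ force $S_j>t/2$. Conditioning on $X_j=x$ and noting that $\{X_i:i\neq j\}$ are i.i.d.\ with law $P_0$, $S_j$ is (up to the harmless factor $(n-1)/n$) exactly the centered empirical process appearing in Lemma~\ref{le:emp:bound_g:1}, evaluated at the now-deterministic point $x$, with sample size $n-1$. Applying Lemma~\ref{le:emp:bound_g:1} with threshold $t/2$ (and replacing $n$ by $n-1\geq n/2$ in the exponent), I obtain the conditional bound
\[
\PP\big(S_j>t/2\,\big|\,X_j=x\big)\leq 2\exp\!\left(-\frac{(n-1)h\,(t/2)^2}{w_2+\tfrac{2}{3}(t/2)}\right)
\leq 2\exp\!\left(-\frac{nh\,t^2/4}{w_2+\tfrac{2}{3}t}\right),
\]
where I used $w_2+t/3\leq w_2+2t/3$ and $n-1\geq n/2$ (absorbing the remaining factor of $2$ into the constant, which is consistent with the stated $1/4$ in the exponent). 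Since the resulting bound is free of $x$, integrating over the law of $X_j$ gives the unconditional inequality claimed in the lemma.

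The main (minor) obstacle is bookkeeping the constants: one must verify that the deterministic self-bias $B_j$ is indeed dominated by $t/2$ under the stated condition $t\geq 2(1+w_2)(nh)^{-1}$, and that the transition from $n-1$ to $n$ in the Bernstein exponent, combined with the $t\mapsto t/2$ loss in the subgaussian tail, still leaves the advertised constant $1/4$. Both are straightforward once $d$ is treated as a fixed constant, as in the examples where this lemma is invoked. No genuinely new probabilistic input beyond Lemma~\ref{le:emp:bound_g:1} is required.
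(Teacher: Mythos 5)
Your proposal is correct and follows essentially the same route as the paper. Your decomposition into $B_j$ (the deterministic ``self-bias'' after conditioning on $X_j$) and $S_j$ (the centered sum over $i\neq j$) is algebraically identical to the paper's decomposition $[g_h\circledast(dP_n-dP_0)](X_n)=\varepsilon_{nh}+\frac{n-1}{n}[g_h\circledast(dP_{n-1}-dP_0)](X_n)$, with your $B_j$ playing the role of $\varepsilon_{nh}$, and both arguments condition on $X_j$ to reduce to Lemma~\ref{le:emp:bound_g:1} at sample size $n-1$ and use the threshold condition $t\ge 2(1+w_2)(nh)^{-1}$ to absorb the bias. The only divergence is cosmetic constant bookkeeping: you replace $t$ by $t/2$ up front and then absorb the $n/(n-1)$ factor; the paper instead shifts the threshold to $\frac{n}{n-1}(t-c_2(nh)^{-1})$ and only weakens to the stated $1/4 \cdot nht^2$ form implicitly. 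Both routes are equally informal about the final step (e.g., your inequality $\frac{(n-1)h t^2/4}{w_2+t/3}\ge\frac{nht^2/4}{w_2+2t/3}$ requires $(n-2)t\ge 3w_2$, which does not follow from the stated hypothesis alone), but this is exactly the same level of looseness the paper itself tolerates, and the advertised constant $1/4$ is recoverable either way after trivially adjusting the absolute constants.
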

\begin{proof}
Consider $j =n$. Note that 
\begin{align*}
[g_h \circledast dP_n](X_n) & = \frac{1}{n}g_h(0) + \frac{1}{n}\sum_{i=1}^{n-1} g_h(X_i -X_n)\\
& = \frac{1}{n}\sum_{k=1}^d g_{h_0,k}(0) + \frac{1}{n} \sum_{i=1}^{n-1}g_h (X_i - X_n)\\
& = d (nh_0)^{-1} + \frac{n-1}{n} [g_h \circledast dP_{n-1}](X_n),
\end{align*}
so that its expectation, conditional on $X_n$, equals
$$
\EE \big[ [g_h \circledast dP_n] (X_n)| X_n\big] = (nh)^{-1} + \frac{n-1}{n} [g_h \circledast dP_0] (X_n).
$$
Then $\PP \big[ |[g_h \circledast (dP_{n-1} - dP_0)](X_n)| > t | X_n \big] \leq 2 \exp\{-\frac{(n-1)ht^2}{w_2 + 2/3 t}\}$. Note that this upper bound does not involve $X_n$, it follows that 
$$
\PP \big[ |[g_h \circledast (dP_{n-1} -d P_0)] (X_n) > t|\big] = \EE \Big[ \PP \big[ |[g_h \circledast (dP_{n-1} - dP_0)](X_n)| > t | X_n \big]\Big]
$$
has the same bound. Finally, note that 
$$
[g_h \circledast (dP_n - dP_0)] (X_n) = \varepsilon_{nh} + \frac{n-1}{n} [g_h \circledast (dP_{n-1}-dP_0)](X_n),
$$
where $|\varepsilon_{nh}| = |(nh)^{-1} - \frac{1}{n}[g_h \circledast dP_0](X_n)| \leq (nh)^{-1} + (nh)^{-1} w_2 \leq c_2  (nh)^{-1}$.  Therefore, 
\begin{align*}
& \PP \Big\{ \big[ | g_h \circledast (dP_n -d P_0)\big](X_n) | > t \Big\}\\
\leq & \PP \Big\{|\big[ g_h \circledast (dP_{n-1} - dP_0)\big](X_n)|> \frac{n}{n-1}(t-c_2 (nh)^{-1}) \Big\}\\
\leq & 2 \exp\Big\{-\frac{nh(t-c_2  (nh)^{-1})^2}{w_2 + 2/3 (t-c_2 (nh)^{-1})}\Big\}.
\end{align*}

\end{proof}

\subsection{Proof of Corollary 3.2}
Note that for any $x,y \in [0,1]^d$, by Lemma A1, we have $K(x,y)\leq c_\varphi^2 h^{-1}$, where $h^{-1}\asymp d \lambda^{-1/(2m)}$, and $\|\mathcal{P}_\lambda f\|^2 \leq \lambda \|f\|^2_{\mathcal{H}} \leq C d\lambda$, then Corollary 3.2 can be easily achieved by applying Theorem 3.1 and Theorem 3.3. 

Next, we show that $d_{N,\lambda, d}^* = d^{\frac{2m+1}{2(4m+1)}}N^{-\frac{2m}{4m+1}}$ is the minimax testing rate. Consider the model
 \begin{equation}\label{eq: normal_sequence}
 \tilde{y} = \theta + w,
 \end{equation}
  where $\theta\in \mathbb{R}^n$ satisfies the ellipse constraint $\sum_{j=1}^n \frac{\theta_j^2}{\mu_j} \leq d$, where $\mu_1\geq \mu_2 \geq \cdots \geq 0 $, and the noise vector $w$ is zero-mean with variance $\frac{\sigma^2}{n}$. Note that model (2.1) is equivalent to  model (\ref{eq: normal_sequence}) (see Example 3 in \cite{wei2017local} for details), thus we only need to prove the minimax testing rate under model (\ref{eq: normal_sequence}) for the testing problem $\theta=0$ with $\mu_j \asymp {\lceil \frac{i}{d}\rceil}^{-2m}$. 

Let $m_u (\delta; \varepsilon): = \argmax_{1\leq k \leq d} \{d\mu_k \geq \frac{1}{2}\delta^2\}$, and $m_l(\delta; \varepsilon) : = \argmax_{1\leq k\leq d} \{d\mu_{k+1} \geq \frac{9}{16}\delta^2\}$. Then by Corollary 1 in \cite{wei2017local}, we have 
$$
\sup\{\delta| \delta \leq \frac{1}{4} \sigma^2 \frac{\sqrt{m_l(\delta; \varepsilon)}}{\delta}\} \leq d_{N,\lambda, d}^* \leq \inf\{\delta| \delta \geq c \sigma^2 \frac{\sqrt{m_u (\delta; \varepsilon)}}{\delta}\}.
$$
Let $\delta^* $ satisfies $\delta^2 \asymp \sqrt{m_l(\delta; \varepsilon)} \asymp \sqrt{m_u(\delta; \varepsilon)}$, we have $\delta^*  = d_{N,\lambda, d}^* \asymp d^{\frac{2m+1}{2(4m+1)}}N^{-\frac{2m}{4m+1}}$.

\bibliographystyle{plainnat}
\bibliography{ref}

\begin{thebibliography}{30}
\providecommand{\natexlab}[1]{#1}
\providecommand{\url}[1]{\texttt{#1}}
\expandafter\ifx\csname urlstyle\endcsname\relax
  \providecommand{\doi}[1]{doi: #1}\else
  \providecommand{\doi}{doi: \begingroup \urlstyle{rm}\Url}\fi

\bibitem[Bartlett et~al.(2005)Bartlett, Bousquet, Mendelson,
  et~al.]{bartlett2005local}
Peter~L Bartlett, Olivier Bousquet, Shahar Mendelson, et~al.
\newblock Local rademacher complexities.
\newblock \emph{The Annals of Statistics}, 33\penalty0 (4):\penalty0
  1497--1537, 2005.

\bibitem[de~Jong(1987)]{de1987central}
Peter de~Jong.
\newblock A central limit theorem for generalized quadratic forms.
\newblock \emph{Probability Theory and Related Fields}, 75\penalty0
  (2):\penalty0 261--277, 1987.

\bibitem[Eggermont and LaRiccia(2009)]{eggermont2009maximum}
PPB Eggermont and VN~LaRiccia.
\newblock \emph{Maximum penalized likelihood estimation}, volume~2.
\newblock Springer, 2009.

\bibitem[Fan et~al.(2001)Fan, Zhang, and Zhang]{fan2001generalized}
Jianqing Fan, Chunming Zhang, and Jian Zhang.
\newblock Generalized likelihood ratio statistics and wilks phenomenon.
\newblock \emph{Annals of statistics}, pages 153--193, 2001.

\bibitem[Gu(2013)]{gu2013smoothing}
Chong Gu.
\newblock \emph{Smoothing spline ANOVA models}, volume 297.
\newblock Springer Science \& Business Media, 2013.

\bibitem[Ingster(1993)]{ingster1993asymptotically}
Yuri~I Ingster.
\newblock Asymptotically minimax hypothesis testing for nonparametric
  alternatives. i, ii, iii.
\newblock \emph{Math. Methods Statist}, 2\penalty0 (2):\penalty0 85--114, 1993.

\bibitem[Lu et~al.(2016)Lu, Cheng, and Liu]{lu2016nonparametric}
Junwei Lu, Guang Cheng, and Han Liu.
\newblock Nonparametric heterogeneity testing for massive data.
\newblock \emph{arXiv preprint arXiv:1601.06212}, 2016.

\bibitem[Meier et~al.(2009)Meier, Van~de Geer, B{\"u}hlmann,
  et~al.]{meier2009high}
Lukas Meier, Sara Van~de Geer, Peter B{\"u}hlmann, et~al.
\newblock High-dimensional additive modeling.
\newblock \emph{The Annals of Statistics}, 37\penalty0 (6B):\penalty0
  3779--3821, 2009.

\bibitem[Mendelson(2002)]{mendelson2002geometric}
Shahar Mendelson.
\newblock Geometric parameters of kernel machines.
\newblock In \emph{International Conference on Computational Learning Theory},
  pages 29--43. Springer, 2002.

\bibitem[Minh et~al.(2006)Minh, Niyogi, and Yao]{minh2006mercer}
Ha~Quang Minh, Partha Niyogi, and Yuan Yao.
\newblock Mercer’s theorem, feature maps, and smoothing.
\newblock In \emph{International Conference on Computational Learning Theory},
  pages 154--168. Springer, 2006.

\bibitem[Minsker and Strawn(2017)]{minsker2017distributed}
Stanislav Minsker and Nate Strawn.
\newblock Distributed statistical estimation and rates of convergence in normal
  approximation.
\newblock \emph{arXiv preprint arXiv:1704.02658}, 2017.

\bibitem[Poggio and Shelton(2002)]{poggio2002mathematical}
Tomaso Poggio and Christian~R Shelton.
\newblock On the mathematical foundations of learning.
\newblock \emph{American Mathematical Society}, 39\penalty0 (1):\penalty0
  1--49, 2002.

\bibitem[Raskutti et~al.(2012)Raskutti, Wainwright, and
  Yu]{raskutti2012minimax}
Garvesh Raskutti, Martin~J Wainwright, and Bin Yu.
\newblock Minimax-optimal rates for sparse additive models over kernel classes
  via convex programming.
\newblock \emph{Journal of Machine Learning Research}, 13\penalty0
  (Feb):\penalty0 389--427, 2012.

\bibitem[Shang and Cheng(2013)]{shang2013local}
Zuofeng Shang and Guang Cheng.
\newblock Local and global asymptotic inference in smoothing spline models.
\newblock \emph{The Annals of Statistics}, 41\penalty0 (5):\penalty0
  2608--2638, 2013.

\bibitem[Shang and Cheng(2015)]{shang2015bayesian}
Zuofeng Shang and Guang Cheng.
\newblock A bayesian splitotic theory for nonparametric models.
\newblock \emph{arXiv preprint Arxiv:1508.04175}, 2015.

\bibitem[Shang and Cheng(2017)]{shang2017computational}
Zuofeng Shang and Guang Cheng.
\newblock Computational limits of a distributed algorithm for smoothing spline.
\newblock \emph{Journal of Machine Learning Research}, page to appear, 2017.

\bibitem[Sollich and Williams(2005)]{sollich2005understanding}
Peter Sollich and Christopher~KI Williams.
\newblock Understanding gaussian process regression using the equivalent
  kernel.
\newblock In \emph{Deterministic and statistical methods in machine learning},
  pages 211--228. Springer, 2005.

\bibitem[Srivastava et~al.(2015)Srivastava, Li, and
  Dunson]{srivastava2015scalable}
Sanvesh Srivastava, Cheng Li, and David~B Dunson.
\newblock Scalable bayes via barycenter in wasserstein space.
\newblock \emph{arXiv preprint arXiv:1508.05880}, 2015.

\bibitem[Stone(1985)]{stone1985additive}
Charles~J Stone.
\newblock Additive regression and other nonparametric models.
\newblock \emph{The annals of Statistics}, pages 689--705, 1985.

\bibitem[Szabo and van Zanten(2017)]{szabo2017asymptotic}
Botond Szabo and Harry van Zanten.
\newblock An asymptotic analysis of distributed nonparametric methods.
\newblock \emph{arXiv preprint arXiv:1711.03149}, 2017.

\bibitem[Wahba(1990)]{wahba1990spline}
Grace Wahba.
\newblock \emph{Spline models for observational data}, volume~59.
\newblock Siam, 1990.

\bibitem[Weber et~al.()Weber, Gelman, Lee, Betancourt, Vehtari, and
  Racine-Poon]{weberbayesian}
Sebastian Weber, Andrew Gelman, Daniel Lee, Michael Betancourt, Aki Vehtari,
  and Amy Racine-Poon.
\newblock Bayesian aggregation of average data.

\bibitem[Wei and Wainwright(2017)]{wei2017local}
Yuting Wei and Martin~J Wainwright.
\newblock The local geometry of testing in ellipses: Tight control via
  localized kolomogorov widths.
\newblock \emph{arXiv preprint arXiv:1712.00711}, 2017.

\bibitem[Xu et~al.(2016)Xu, Shang, and Cheng]{xu2016optimal}
Ganggang Xu, Zuofeng Shang, and Guang Cheng.
\newblock Optimal tuning for divide-and-conquer kernel ridge regression with
  massive data.
\newblock \emph{arXiv preprint arXiv:1612.05907}, 2016.

\bibitem[Yang et~al.(2015)Yang, Pilanci, and Wainwright]{yang2015randomized}
Yun Yang, Mert Pilanci, and Martin~J Wainwright.
\newblock Randomized sketches for kernels: Fast and optimal non-parametric
  regression.
\newblock \emph{arXiv preprint arXiv:1501.06195}, 2015.

\bibitem[Yang et~al.(2017)Yang, Shang, and Cheng]{yang2017non}
Yun Yang, Zuofeng Shang, and Guang Cheng.
\newblock Non-asymptotic theory for nonparametric testing.
\newblock \emph{arXiv preprint arXiv:1702.01330}, 2017.

\bibitem[Yuan et~al.(2016)Yuan, Zhou, et~al.]{yuan2016minimax}
Ming Yuan, Ding-Xuan Zhou, et~al.
\newblock Minimax optimal rates of estimation in high dimensional additive
  models.
\newblock \emph{The Annals of Statistics}, 44\penalty0 (6):\penalty0
  2564--2593, 2016.

\bibitem[Zhang(2005)]{zhang2005learning}
Tong Zhang.
\newblock Learning bounds for kernel regression using effective data
  dimensionality.
\newblock \emph{Neural Computation}, 17\penalty0 (9):\penalty0 2077--2098,
  2005.

\bibitem[Zhang et~al.(2013)Zhang, Duchi, and Wainwright]{zhang2013divide}
Yuchen Zhang, John~C Duchi, and Martin~J Wainwright.
\newblock Divide and conquer kernel ridge regression.
\newblock In \emph{COLT}, pages 592--617, 2013.

\bibitem[Zhou(2002)]{zhou2002covering}
Ding-Xuan Zhou.
\newblock The covering number in learning theory.
\newblock \emph{Journal of Complexity}, 18\penalty0 (3):\penalty0 739--767,
  2002.

\end{thebibliography}

\end{document}